\documentclass[a4paper,11pt]{article}

\usepackage[utf8]{inputenc}
\usepackage[T1]{fontenc}
\usepackage[english]{babel}
\usepackage{csquotes}
\usepackage{authblk}
\usepackage{titling}
\usepackage{titlesec}
\usepackage{graphicx}
\usepackage{float}

\usepackage{mathtools}
\usepackage{amssymb}
\usepackage{amsthm}
\usepackage{amsmath}
\allowdisplaybreaks
\usepackage{cases}
\usepackage{mathtools}
\usepackage{booktabs}
\usepackage{xcolor}
\newcommand{\noeoc}{\textcolor{black}{\textbf{\textemdash}}}

\usepackage{booktabs}
\usepackage{graphicx}
\usepackage{caption}
\usepackage{xcolor}

\providecommand{\noeoc}{%
    \textcolor{black}{\textbf{\textemdash}}%
}

\usepackage{caption}
\usepackage{subcaption}

\usepackage{xcolor}
\usepackage{listings}

\definecolor{codegreen}{rgb}{0,0.6,0}
\definecolor{codegray}{rgb}{0.5,0.5,0.5}
\definecolor{codepurple}{rgb}{0.58,0,0.82}
\definecolor{backcolour}{rgb}{0.95,0.95,0.92}

\lstdefinestyle{mystyle}{
    backgroundcolor=\color{backcolour},   
    commentstyle=\color{codegreen},
    keywordstyle=\color{magenta},
    numberstyle=\tiny\color{codegray},
    stringstyle=\color{codepurple},
    basicstyle=\ttfamily\footnotesize,
    breaklines=true,
    breakatwhitespace=false,
    captionpos=b,
    keepspaces=true,
    numbersep=5pt,
    showspaces=false,
    showstringspaces=false,
    showtabs=false,
    tabsize=2
}
\usepackage{tabularx}

\usepackage[
    backend=biber,
    style=phys,
    sorting=nyt,
    biblabel=brackets,
    maxbibnames=99,
    articletitle=true,
    doi=false
]{biblatex}
\DeclareFieldFormat[article]{title}{#1}

\usepackage[
    a4paper,
    left=1.8cm,
    right=1.8cm,
    top=1.5cm,
    bottom=1.4cm
]{geometry}

\usepackage{enumitem}

\newtheorem{theorem}{Theorem}[section]

\theoremstyle{definition}

\theoremstyle{remark}
\newtheorem{remark}[theorem]{Remark}

\theoremstyle{proposition}
\newtheorem{proposition}[theorem]{Proposition}
\newtheorem{corollary}[theorem]{Corollary}
\newtheorem{lemma}[theorem]{Lemma}

\theoremstyle{assumption}
\newtheorem{assumption}{Assumption}[section]

\newcommand{\f}[1]{\textbf{\textit{#1}}}
\newcommand{\curl}{\textbf{curl}}

\newcommand{\norm}[1]{\lVert#1\rVert}

\DeclareMathOperator*{\argmin}{arg\, min}
\newcommand{\citegen}[3][]{%
    \if\relax\detokenize{#1}\relax
        \cite{#2}%
    \else
        \cite[, #1~#3]{#2}%
    \fi
}

\titleformat{\section}[runin]
  {\normalfont\bfseries}
  {\thesection.}
  {0.5em}
  {}
  [.]

\titlespacing*{\section}
  {0pt}
  {1.0em}
  {0.6em}

  \titleformat{\subsection}[runin]
  {\normalfont\bfseries}
  {\thesubsection.}
  {0.5em}
  {}
  [.]

\titlespacing*{\subsection}
  {0pt}
  {0.9em}
  {0.6em}
\usepackage[colorlinks=true, linkcolor=black, urlcolor=black, citecolor=black]{hyperref}
\usepackage{cleveref}
\title{%
  \bfseries\Large
  \MakeUppercase{Finite Element Analysis of Maxwell's Equations with
  Time-Varying Coefficients and Nonlinear E-H Coupling: Convergence and Exponential Decay}\thanks{This work is supported  by   DFG research grants   YO 159/4-1  (project number: 455353159) and YO 159/5-1 (project number: 513566305)} %
}

\author{ Jan Renner\thanks{Universit\"at Duisburg-Essen, Fakult\"at f\"ur Mathematik, Thea-Leymann-Str. 9, D-45127 Essen,
Germany. Email: jan.renner@uni-due.de, irwin.yousept@uni-due.de }  \phantom{} and \phantom{}    Irwin Yousept$^{\dagger}$\thanks{Corresponding author. }}
\date{}

\begin{document}
 
\maketitle

\vspace{-2.1cm}
\begin{abstract}
\vspace{-0.2cm}
This paper analyzes  finite element approximations for Maxwell's equations with time-dependent coefficients and nonlinear E-H coupling     in both the total current density and   the  Silver--M\"uller-type boundary condition. The nonlinear coupling and the time-varying material parameters prevent the use of standard analytical tools, including the discrete compactness property  and the Minty--Browder argument, making the numerical analysis  particularly challenging. As the main novelty, we establish two primary contributions: Uniform convergence on every finite time interval $[0,T]$ and exponential stability on the infinite time horizon $[0,\infty)$. The uniform convergence is established via a  Cauchy-type argument within a time-dependent Hilbert space framework that entirely bypasses Minty's trick and the discrete compactness argument. Furthermore, under a positive homogeneity assumption on the nonlinearities, the exponential stability is analyzed by means of a nonlinear sup-max   problem arising from ratios of consecutive discrete energies. With this strategy, we ultimately prove   a fully discrete unconditional exponential decay result under a specific sufficient condition. In the absence of this condition, a concrete counterexample shows that the  energy decay fails to hold. Numerical experiments are provided to validate the theoretical findings. 

\vspace{0.1cm}
\noindent \textbf{Keywords:} {Finite element analysis, non-autonomous Maxwell equations, nonlinear E-H coupling, uniform convergence, fully discrete unconditional exponential decay.}
 
\end{abstract}

\setlength{\abovedisplayskip}{10pt}
\setlength{\belowdisplayskip}{10pt}

\section{Introduction}
This paper    analyzes   finite element approximations  for the   nonlinear Maxwell system
\begin{subequations} 
\begin{equation}\label{eq:maxwell intro 1}
    \begin{cases}
      \partial_t\f{D}(x,t) - \curl\, \textbf{\textit{H}}(x,t) + \f{J}_{tot}(x,\f{E}(x,t),\f{H}(x,t),t) = \textbf{\textit{f}}(x,t) & \text{in } \Omega \times (0,T)\\
      \partial_t\f{B}(x,t)+ \curl\, \textbf{\textit{E}}(x,t) = 0 & \text{in } \Omega \times (0,T)\\
      (\f{E},\f{H})(\cdot,0)=(\f{E}_0,\f{H}_0) & \text{in } \Omega
    \end{cases}       
\end{equation}
under the non-autonomous constitutive laws
\begin{equation}\label{eq:maxwell intro 2}
\f{D}(x,t) = \varepsilon(x,t) \f{E}(x,t),\quad \f{B}(x,t)=\mu(x,t)\f{H}(x,t) \quad \text{ in } \Omega \times (0,T)
\end{equation}
and the generalized Silver-M\"uller  boundary condition 
\begin{equation}\label{eq:maxwell intro 3}
\boldsymbol{\nu} \times \textbf{\textit{H}} = \boldsymbol{\nu} \times  \textbf{\textit{b}}(\cdot, \boldsymbol{\nu} \times \textbf{\textit{E}})  \quad \text{on } \partial \Omega \times (0,T)\end{equation}
\end{subequations}
for some given nonlinearity $\f{b}:\partial \Omega \times \mathbb R^3 \to \mathbb R^3$. In the setting of \eqref{eq:maxwell intro 1}-\eqref{eq:maxwell intro 3}, $\Omega$ denotes a bounded Lipschitz polyhedral domain, $\f{D}:  \Omega \times (0,T) \to \mathbb R^3$  the displacement field, $\f{E}: \Omega \times (0,T) \to \mathbb R^3$  the electric field, $\f{B}:\Omega\times (0,T)\to \mathbb{R}^{3}$ the magnetic flux density, $\f{H}: \Omega \times (0,T) \to \mathbb R^3$ the magnetic field, $\f{f}: \Omega \times (0,T) \to \mathbb R^3$ the applied current source, and $\boldsymbol{\nu}: \partial \Omega \to \mathbb R^3$ the  outer unit normal vector to $\partial \Omega$. The underlying material parameters comprise the time-dependent anisotropic electric permittivity $\varepsilon : \Omega \times (0,T) \to \mathbb R^{3\times 3}$ and the time-dependent anisotropic magnetic permeability $\mu: \Omega \times (0,T)\to \mathbb R^{3\times 3}$. Furthermore,  $\f{J}_{tot}: \Omega \times \mathbb R^3 \times  \mathbb R^3 \times  (0,T) \to \mathbb R^3$ represents the total current density featuring nonlinear $\f E$-$\f H$ coupling and explicit time dependence. The precise mathematical assumptions for all given data  involved in \eqref{eq:maxwell intro 1}-\eqref{eq:maxwell intro 3}  are specified in Assumption \ref{ass}. 

The main characteristic   of our problem is the simultaneous presence of time-varying  material parameters  \eqref{eq:maxwell intro 2} and  the  nonlinear $\f E$-$\f H$ coupling    in the total current density \eqref{eq:maxwell intro 1} and  in the boundary condition \eqref{eq:maxwell intro 3}. Time-dependent phenomena in 
$\varepsilon$ and $\mu$ naturally arise in materials whose dielectric and magnetic properties can be strongly influenced by  external or internal  time-dependent factors such as varying temperature and evolving microstructure. Furthermore, while dependence on the electric field in   $\f{J}_{tot}$ is well-known in the context of Ohm's law and its nonlinear  extension,   dependence on the   magnetic field  occurs in more complex electromagnetic settings. A prominent  example is type-II superconductivity, where $\f{J}_{tot}$ is governed by a critical current with a strong dependence on the magnetic field and temperature (see \cite{Kim,MR4780410}). 
Finally, the nonlinear Silver-M\"uller boundary condition \eqref{eq:maxwell intro 3} is considered to ensure energy stabilization as  $T \to \infty$ and to circumvent undesired spurious reflections when considering artificial boundaries. Regarding the exponential decay of the continuous solution to Maxwell's equations under \eqref{eq:maxwell intro 3}, we refer to the seminal works  by Nicaise et al. \cite{nicaise1,nicaise2}  for   time-independent coefficients and  by  Nicaise and Pignotti  \cite{Nicaise3} for time-varying material parameters.

This paper is devoted to the finite element analysis of \eqref{eq:maxwell intro 1}-\eqref{eq:maxwell intro 3}, focusing on the associated $\f{E}$-$\f{H}$ formulation and its fully discrete scheme \eqref{time-space-discrete}  based on implicit Euler time stepping combined with N\'ed\'elec edge elements and piecewise constant elements in the spirit of Monk \cite{monk2,monk3,monk5}. While this mixed method is not new and has been widely considered in the literature (see \cite{slodicka1,MR3013583,MR2229840,MR2846772}), to the best of our knowledge,   finite element approximations of the present non-autonomous Maxwell system with simultaneous  nonlinear distributed  and boundary $\f E$-$\f H$ coupling have not yet been analyzed. The interplay of these features makes the numerical analysis particularly challenging. Our primary contributions lie in the development of new techniques   enabling us to establish a  uniform convergence result on the finite time interval $[0,T]$ and a fully discrete unconditional exponential decay result on the infinite time horizon $[0,\infty)$.   We briefly summarize the key ideas below.

\vspace{-1em}
\paragraph{Uniform convergence.}
Two analytical tools are commonly used in the convergence analysis of nonlinear Maxwell-type problems: (i) the Minty-Browder argument (cf. \cite{slodicka1,MR2846772} for its application) and (ii) the discrete compactness property \cite{Kikuchi} for   N\'ed\'elec edge elements (cf.   \cite{MR4780410} for its application). However, neither tool is applicable to the Maxwell system \eqref{eq:maxwell intro 1}--\eqref{eq:maxwell intro 3}. Indeed, even if the nonlinear total current density  $\f{J}_{tot}$ is assumed to be monotone with respect to its second and third arguments, the resulting Maxwell system \eqref{eq:maxwell intro 1}-\eqref{eq:maxwell intro 3} and its fully discrete scheme \eqref{time-space-discrete} are in general not monotone (cf. Remark \ref{rem:notmonotone}). In particular, this property is lost by   the time-varying material parameters    appearing in the time derivatives and the  $\f{H}$-dependence
in $\f{J}_{tot}$. To overcome the inapplicability of  the tools (i) and (ii), we develop a     Cauchy-type argument within a  time-dependent Hilbert space  framework, leading to a uniform convergence result (see Theorem \ref{theorem:convergence}). To the best of our knowledge, this approach is novel and constitutes the primary ingredient of our convergence analysis. In particular, we believe that these arguments could be adapted to other nonlinear  problems where the aforementioned tools fail to apply.  
\vspace{-1em}
\paragraph{Fully discrete exponential decay.} While exponential stability in the continuous case is   well understood from the aforementioned contributions by Nicaise et al. \cite{nicaise1,nicaise2,Nicaise3} from the early 2000s, the corresponding fully discrete counterpart has remained an open question. So far, we   are only aware of the recent result by Egger et al. \cite{egger3} for the linear setting with time-independent coefficients under the  PEC boundary condition $   \boldsymbol{\nu} \times \f{E}=0$ (see also \cite{MR3778338,Zuazua} for related results on linear wave equations). To the best of our knowledge, the present paper is the first to analyze and prove a fully discrete unconditional exponential decay result in a nonlinear and non-autonomous   Maxwell setting. Our analysis relies on a positive homogeneity assumption on the nonlinearities $\f{b}$ and $\f{J}_{tot}$ and the consideration of  the following nonlinear sup-max   problem:
$$
\sup_{n \in \mathbb N } \max_{(v,w)\in \mathcal{W}_h\setminus \{(0,0)\}}\frac{\mathcal{E}_h^n(S_{h,\tau}^n(\f{v},\f{w}))}{\mathcal{E}_h^{n-1}(\f{v},\f{w})}. 
 $$
  Here, $S^n_{h,\tau}$ denotes the solution operator associated with the finite element scheme at   $t_n$ (see \eqref{eq:S_tau,h} for its definition) and $\mathcal{E}_h^n$ (resp. $\mathcal{E}_h^{n-1}$) denotes the electromagnetic energy functional (see \eqref{eq:discrete-EME} for its definition) at   $t_n$ (resp. $t_{n-1}$).  We propose a sufficient condition \ref{ass:B5} that, together with the analysis of the above sup-max problem, yields a fully discrete unconditional exponential decay result (see Theorem \ref{theorem-exp-decay} and Corollary \ref{corollary:exponential-decay}).  The significance of the proposed sufficient condition \ref{ass:B5}  is illustrated by a concrete counterexample showing that, in its absence,  exponential energy decay   fails to hold.

 The rest of this paper is organized as follows: In the next section, we introduce our notation and the standing assumptions. Section \ref{sec:3} is devoted to  the finite element  analysis of   \eqref{eq:maxwell intro 1}-\eqref{eq:maxwell intro 3}, comprising the definitions and properties of the applied finite element spaces  (Sect. \ref{sec:3.1}), the derivation and well-posedness of the fully discrete scheme (Sect. \ref{section: existence}),  the stability analysis (Sect. \ref{section: stability}), the convergence analysis (Sect. \ref{section: convergence}) and the   exponential decay analysis (Sect. \ref{section:exponentialstability}). The final section   presents numerical  experiments.


\section{Preliminaries}
We start by introducing the notation and the standing assumptions of this paper. For a given normed space $V$, we write $\norm{\cdot}_V$ for its norm, $V^*$ for its dual space, and $\langle\cdot.\cdot\rangle_{V^*,V}$ for the associated duality pairing. If $V$ is a Hilbert space, then $(\cdot,\cdot)_V$  represents the scalar product of $V$. If $V=\mathbb{R}^3$, we simply write $a\cdot b$ instead of $(a,b)_{\mathbb{R}^3}$. Functions with values in $\mathbb{R}^3$ and spaces of such functions are denoted using a bold symbol. We   frequently deal with functions of the form $\f{u}:\Omega \times [0,T]\to \mathbb{R}^3$, which we   also interpret as   a Banach-valued  mapping $t\mapsto \f{u}(\cdot, t)$ without further mention.

Let us now introduce the Hilbert space
\begin{equation*}
        \f{H}(\curl) \coloneqq \{\f{u}\in \f{L}^2(\Omega): \curl\,\f{u} \in \f{L}^2(\Omega)\},
\end{equation*}
where the $\curl$-operator is to be understood in a distributional sense.  Furthermore, let  $\tau:H^1(\Omega)\to L^2(\partial \Omega)$ denote the   (Sobolev) trace operator and  
$$
H^{1/2}(\partial \Omega):= \{\tau y :  y \in H^1(\Omega)\}, \quad H^{-1/2}(\partial \Omega):=H^{1/2}(\partial \Omega)^*.
$$
It is well-known (cf. \citegen[Theorem]{monk}{3.29}) that there exists a linear and bounded operator $\gamma_t:\f{H}(\curl)\to \f{H}^{-\frac{1}{2}}(\partial \Omega)$, the so-called tangential trace operator, 
%
satisfying 
\begin{align}\label{eigen trace}
\langle \gamma_t(\f{u}),\tau(\boldsymbol{\phi})\rangle_{\f{H}^{-1/2}(\partial \Omega), \f{H}^{1/2}(\partial \Omega)}=\int_\Omega \curl\,\f{u}\cdot \boldsymbol{\phi}- \f{u}\cdot \curl\,\boldsymbol{\phi}\,dx \quad \forall \f{u}\in \f{H}(\curl), \boldsymbol{\phi}\in \f{H}^1(\Omega).
\end{align}
If  $\gamma_t(\f{u})\in \f{L}^2(\partial \Omega)$ holds for  $\f{u}\in \f{H}(\curl)$, then we formally write   $\boldsymbol{\nu}\times \f{u} := \gamma_t(\f{u}) \in \f{L}^2(\partial \Omega)$.  The main Hilbert space   involved in our analysis is given by
$$\f{V} \coloneqq\{\boldsymbol{\phi} \in \textbf{\textit{H}}(\textbf{curl}): \gamma_t(\boldsymbol{\phi}) \in \textbf{\textit{L}}^2(\partial \Omega) \textrm{ and } \boldsymbol{\nu} \cdot \gamma_t(\boldsymbol{\phi}) = 0\text{ on } \partial \Omega\},$$
endowed with the scalar product
$$
(\boldsymbol{\phi},\boldsymbol{\psi})_{\f{V}}  \coloneqq
 \int_{\Omega}   \boldsymbol{\phi} \cdot \boldsymbol{\psi}  +  \curl\, \boldsymbol{\phi} \cdot\curl\, \boldsymbol{\psi}  \,dx + \int_{\partial \Omega}   \boldsymbol{\nu}\times \boldsymbol{\phi} \cdot \boldsymbol{\nu}\times \boldsymbol{\psi} \,dS \quad \forall \boldsymbol{\phi},\boldsymbol{\psi} \in \f{V} $$
 and the induced norm
 \begin{equation} \label{def: norm V}
 \|\boldsymbol{\phi}\|_\f{V} := (\boldsymbol{\phi},\boldsymbol{\phi})_{\f{V}}^{1/2} =( \|  \boldsymbol{\phi}\|_{\f{H}(\curl)}^2+ \|\boldsymbol{\nu}\times \boldsymbol{\phi}\|_{\f{L}^2(\partial \Omega)}^2)^{1/2} \quad \forall \boldsymbol{\phi} \in \f{V}. 
 \end{equation}
From the density of $\f{C}^\infty(\overline{\Omega})$ in $\f{V}$ (cf. \citegen[Theorem]{monk}{3.54})   and Gauss's theorem, we obtain via a standard approximation argument the following partial integration formula for any vector fields $\f{u},\f{v}\in \f{V}$: 
\begin{equation}\label{eq:partial}
    \int_{\Omega} \textbf{\textit{u}}\cdot \curl\, \textbf{\textit{v}} - \curl\,\textbf{\textit{u}}\cdot  \textbf{\textit{v}}\,dx = \int_{\partial \Omega} \textbf{\textit{u}}_{\tau}\cdot (\boldsymbol{\nu} \times \textbf{\textit{v}})\,dS,  
\end{equation}
 where $\f{u}_\tau\coloneqq (\boldsymbol{\nu}\times \f{u})\times \boldsymbol{\nu}$ denotes the a.e. defined pointwise cross product of $\boldsymbol{\nu}\times \f{u}$ and $\boldsymbol{\nu}$.   We close this section by presenting the standing assumptions for our numerical analysis:
\begin{assumption}\label{ass}
\begin{enumerate}[label=(A\arabic*)]
{\normalfont
\item[]
\item \phantomsection\label{ass:A1} The coefficients satisfy  $\varepsilon,\mu\in W^{2,\infty}(0,T;L^\infty_\text{sym}(\Omega)^{3\times 3})$, and there exist $\underline{\varepsilon},\overline{\varepsilon},\underline{\mu},\overline{\mu}>0$ such that
\begin{equation}\label{eq:uniformposdef}
    \underline{\varepsilon}\lvert \xi\rvert^2\leq \xi^T \varepsilon(x,t)\xi\leq \overline{\varepsilon}\lvert \xi\rvert^2\quad \text{and}\quad \underline{\mu}\lvert \xi\rvert^2\leq \xi^T \mu(x,t)\xi\leq \overline{\mu}\lvert \xi\rvert^2  
    \end{equation} 
hold for a.e. $x\in \Omega$    and all $\xi\in \mathbb{R}^3,t\in [0,T]$.
\item \phantomsection\label{ass:A2} The external current source fulfills $\f{f}\in H^1(0,T;\f{L}^2(\Omega))$.
\item \phantomsection\label{ass:A3} The initial value $(\f{E}_0,\f{H}_0)\in \f{V}\times \f{V}$ satisfies the compability condition $\boldsymbol{\nu}\times \f{H}_0 = \boldsymbol{\nu}\times \f{b}(\cdot, \boldsymbol{\nu}\times \f{E}_0).$
\item \phantomsection\label{ass:A4} The total current density $\f{J}_{tot} :\Omega\times \mathbb{R}^3\times \mathbb{R}^3\times [0,T]\to \mathbb{R}^3$ is measurable w.r.t. the first variable and satisfies $\f{J}_{tot}(\cdot,0,0,0)\in \f{L}^2(\Omega)$. Furthermore, there exists a constant $L_{tot}>0$ such that
\begin{equation}\label{eq:J_tot-estimate}
\lvert \f{J}_{tot}(x,\xi,\eta,t)-\f{J}_{tot}(x,\xi',\eta',t')\rvert \leq L_{tot}\left(\lvert \xi - \xi'\rvert +\lvert \eta-\eta'\rvert+ (1+\lvert \xi\rvert + \lvert \xi'\rvert+\lvert \eta\rvert+\lvert \eta'\rvert)\lvert t-t'\rvert\right)
\end{equation}
holds for a.e. $x\in \Omega$ and all $\xi,\xi',\eta,\eta'\in \mathbb{R}^3, t,t'\in [0,T].$    
\item \phantomsection\label{ass:A5} The nonlinearity $\textbf{\textit{b}}: \partial \Omega \times \mathbb{R}^3 \to \mathbb{R}^3$ is measurable w.r.t. the first variable  and continuous and montone w.r.t. the second one. Furthermore, $ \textbf{\textit{b}}(\cdot,0)=0$, and there exist  $\underline b, \overline b, K>0$ such that  
\begin{equation}\label{eq:propertyb}\! \!\!\!\! \!\! \!\!\textbf{\textit{b}}(x,\xi) \! \cdot \!  \xi\geq \underline b\lvert \xi\rvert^2 \    \textrm{f.a.a. }   x\in \partial \Omega   \text{ and all }  | \xi | \ge K , \quad    \lvert  \textbf{\textit{b}}(x,\xi) \rvert \leq \overline b (\lvert \xi\rvert \!+ \!1)  \  \textrm{f.a.a. }  x\in \partial \Omega \text{ and all }  \xi \in \mathbb R^3.\end{equation}
 
\item \phantomsection\label{ass:A6}It holds that $\f{E}_0,\curl\,\f{E}_0 \in \f{H}^1(\Omega)$ and  $\f{b}:\f{L}^2(\partial \Omega) \to \f{L}^2(\partial \Omega),\f{u}\mapsto \f{b}(\cdot,\f{u}),$ is   Lipschitz continuous at $\boldsymbol{\nu}\times \f{E}_0$.}
\end{enumerate}
\end{assumption}
\begin{remark}  
Note that \ref{ass:A6} is only required for the stability   but not for the well-posedness. Moreover,  under the monotonicity assumption on $\f{b}$, the second condition in \eqref{eq:propertyb} is equivalent to $\textbf{\textit{b}}(x,\xi) \rvert \leq \overline b \lvert \xi\rvert $ for a.e.  $x\in \partial \Omega$   and all  $|\xi| \ge K$, which corresponds to the growth condition in \cite{nicaise1}.  \end{remark}

%
%

\section{Finite element analysis} \label{sec:3}
This section analyzes   the finite element discretization of \eqref{eq:maxwell intro 1}-\eqref{eq:maxwell intro 3}    in terms of well-posedness, stability,   convergence, and ultimately exponential decay. As preparation, let us begin by introducing and discussing the involved finite element spaces and their   respective properties. 

\subsection{Finite element spaces} \label{sec:3.1}
In all that follows, let Assumption \ref{ass} hold and let $\{\mathcal{T}_h\}_{h>0}$ denote a quasi-uniform family of triangulations for $\Omega$. We now introduce the Nédélec finite element space of the first family \cite{ned80} and the piecewise constant finite element space by
\begin{align*}
\textbf{ND}_h&\coloneqq \{\f{v}_h \in \f{H}(\curl) : \f{v}_h\rvert_T = a_T + b_T \times \cdot\text{ for some } a_T,b_T\in \mathbb{R}^3\quad \forall T\in \mathcal{T}_h\},\\
\textbf{DG}_h&\coloneqq \{\f{w}_h\in \f{L}^2(\Omega):\f{w}_h\rvert_T = a_T\text{ for some } a_T\in \mathbb{R}^3 \quad \forall T\in \mathcal{T}_h\}.
\end{align*}
A well-known property of   $\textbf{ND}_h$ is the  discrete inverse curl estimate: 
\begin{equation}\label{eq:inverse}
  \exists C_{\text{inv}}>0 \quad \forall \f{v}_h\in \textbf{ND}_h \,:  \quad  \norm{\curl\,\f{v}_h}_{\f{L}^2(\Omega)}\leq \frac{C_\text{inv}}{h}\norm{\f{v}_h}_{\f{L}^2(\Omega).}
\end{equation}
Let us next introduce the classical $\f{L}^2$-Hilbert projection operator onto $\textbf{DG}_h$ by 
\begin{equation}\label{def:Q_h}
    \f{Q}_h : \f{L}^2(\Omega) \to \textbf{DG}_h,\quad \f{Q}_h \f{w} \coloneqq \argmin_{\f{w}_h\in \textbf{DG}_h} \|\f{w}_h - \f{w}\|_{\f{L}^2(\Omega)} = \sum_{T\in \mathcal{T}_h} \chi_T \frac{1}{\lvert T\rvert}\int_T \f{w}\,dx,
\end{equation}
which satisfies the well-known properties
\begin{equation}\label{eq:qhest}
 \quad (\f{Q}_h\f{w} - \f{w}, \f{w}_h)_{\f{L}^2(\Omega)} = 0\quad
 \textrm{and} \quad  \lim_{h \to 0} \|\f{Q}_h\f{w} -\f{w} \|_{ \f{L}^2(\Omega)} =0 \quad \forall \f{w}\in \f{L}^2(\Omega),\f{w}_h\in \textbf{DG}_h.
\end{equation}
In particular, we have
\begin{equation}\label{eq:qhest2}
\norm{\f{Q}_h \f{w}}_{\f{L}^2(\Omega)}\leq \norm{\f{w}}_{\f{L}^2(\Omega)} \quad \forall \f{w} \in \f{L}^2(\Omega),\quad \norm{\f{Q}_h\f{w}}_{\f{L}^\infty(\Omega)}\leq \norm{\f{w}}_{\f{L}^\infty(\Omega)}\quad \forall \f{w}\in \f{L}^\infty(\Omega).
    \end{equation}
    Note that the same notation   is used for the componentwise extension of $\f{Q}_h$ to matrix-valued functions. All properties \eqref{eq:qhest}  and \eqref{eq:qhest2} remain valid in this setting.
Analogously, since $\textbf{ND}_h\subset \f{V}$ is a closed subspace, the  Hilbert projection theorem allows us to define the following Hilbert projection operator:
\begin{equation} \label{eq:hilbertpro}
\Pi_h:\f{V}\to \textbf{ND}_h, \quad   \Pi_h \f{v} \coloneqq \argmin_{\f{v}_h\in  \textbf{ND}_h} \|\f{v}_h - \f{v}\|_{\f{V}},
\end{equation}
which similar to above satisfies
\begin{equation}\label{eq:Pi_h proj}
  (\Pi_h\f{v} - \f{v}, \f{v}_h)_{\f{V}} = 0\quad \forall \f{v}\in \f{V},\f{v}_h\in \textbf{ND}_h \quad \Rightarrow \quad   \norm{\Pi_h \f{v}}_{\f{V}}\leq \norm{\f{v}}_{\f{V}}  \quad \forall \f{v}\in \f{V}.
    \end{equation} 
In   the next lemma (cf. Appendix \ref{appen b} for its proof), we provide an error estimate for $\Pi_h$ by using  the space
$$\f{H}^s(\curl)\coloneqq \{\f{u}\in \f{H}^s(\Omega):\curl\,\f{u}\in \f{H}^s(\Omega)\}, \quad\norm{\f{u}}_{\f{H}^s(\curl)}\coloneqq \norm{\f{u}}_{\f{H}^s(\Omega)} + \norm{\curl\,\f{u}}_{\f{H}^s(\Omega)}$$
with $s>0$, where $\f{H}^s(\Omega)$ denotes the fractional Sobolev-Slobodeckij space (cf. \cite[p. 18]{roubicek}). 
\begin{lemma}\label{lemma:V}
    \normalfont Let $1/2<s\leq 1$. Then there exists a constant $\widetilde C>0$, independent of $h$ and $\f{u}$, such that 
    \begin{equation}\label{estimate lemma}
    \norm{\Pi_h \f{u} - \f{u}}_\f{V} \leq  \widetilde C h^{s-\frac{1}{2}}\norm{\f{u}}_{\f{H}^s(\curl)} \quad \forall \f{u} \in \f{H}^s(\curl) \quad  \forall h \in (0,1).
    \end{equation}
    Consequently, it holds for all $\f{u}\in \f{V}$ that $\Pi_h \f{u}\to \f{u}$ in $\f{V}$ as $h\to 0$.
\end{lemma}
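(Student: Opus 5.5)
Since $\Pi_h$ is the $\f{V}$-orthogonal projection onto $\textbf{ND}_h$, we have $\norm{\Pi_h\f{u}-\f{u}}_{\f{V}}\le \norm{\f{v}_h-\f{u}}_{\f{V}}$ for every $\f{v}_h\in\textbf{ND}_h$. The estimate therefore reduces to finding one good element of $\textbf{ND}_h$. The natural choice is the canonical Nédélec edge interpolant $r_h\f{u}$. Its standard error bounds for fields with fractional regularity $s>1/2$ and $\curl\,\f{u}\in\f{H}^s(\Omega)$ are (cf. the analysis in \cite{monk}, Theorem 5.41 and surrounding results)
\begin{equation*}
\norm{r_h\f{u}-\f{u}}_{\f{L}^2(\Omega)}+\norm{\curl(r_h\f{u}-\f{u})}_{\f{L}^2(\Omega)}\le C h^{s}\norm{\f{u}}_{\f{H}^s(\curl)} .
\end{equation*}
Here $s>1/2$ is exactly what makes the edge moments well defined, and the constant is uniform under the mesh regularity. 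Note that $r_h\f{u}\in\textbf{ND}_h\subset\f{V}$, as required.

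It remains to bound the boundary part $\norm{\boldsymbol{\nu}\times(r_h\f{u}-\f{u})}_{\f{L}^2(\partial\Omega)}$ by $Ch^{s-1/2}\norm{\f{u}}_{\f{H}^s(\curl)}$. This is the main obstacle, and the source of the loss of $h^{1/2}$. I would argue elementwise. Set $\f{e}:=r_h\f{u}-\f{u}$ and let $T$ be an element with a face $F\subset\partial\Omega$. Use the scaled trace inequality on the reference element: for $1/2<s\le1$, $\norm{\hat{\f{e}}}_{L^2(\hat F)}\le C\norm{\hat{\f{e}}}_{H^s(\hat T)}$. Mapping back with the affine (covariant Piola) transformation gives
\begin{equation*}
\norm{\f{e}}_{L^2(F)}^2\le C\big(h^{-1}\norm{\f{e}}_{L^2(T)}^2+h^{2s-1}\lvert\f{e}\rvert_{H^s(T)}^2\big).
\end{equation*}
On the reference element, the interpolation error satisfies $\norm{\hat{\f{e}}}_{H^s(\hat T)}\le C(\lvert\hat{\f{u}}\rvert_{H^s(\hat T)}+\lvert\widehat{\curl\,\f{u}}\rvert_{H^s(\hat T)})$. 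This follows from a Bramble--Hilbert argument, since $\hat r$ reproduces constants, together with the boundedness of $\hat r$ from $\{\hat{\f{v}}\in H^s:\curl\,\hat{\f{v}}\in H^s\}$ into the finite-dimensional space. Scaling this back yields local bounds $\norm{\f{e}}_{L^2(T)}\lesssim h^s(\cdots)$ and $\lvert\f{e}\rvert_{H^s(T)}\lesssim(\cdots)$, which give $\norm{\f{e}}_{L^2(F)}\lesssim h^{s-1/2}(\lvert\f{u}\rvert_{H^s(T)}+\lvert\curl\,\f{u}\rvert_{H^s(T)})$. Summing over boundary elements and using $h<1$ (so that $h^s\le h^{s-1/2}$ for the volume terms) gives \eqref{estimate lemma}. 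Summing squared local Sobolev--Slobodeckij seminorms is bounded by the global one, since the double integral over $\Omega\times\Omega$ dominates the sum over the diagonal blocks $T\times T$.

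For the density statement, take $\f{u}\in\f{V}$ and $\delta>0$. Choose $\f{w}\in\f{C}^\infty(\overline\Omega)\subset\f{H}^1(\curl)$ with $\norm{\f{u}-\f{w}}_{\f{V}}<\delta$, which is possible by the density result cited from \citegen[Theorem]{monk}{3.54}. Then
\begin{equation*}
\norm{\Pi_h\f{u}-\f{u}}_{\f{V}}\le\norm{\Pi_h(\f{u}-\f{w})}_{\f{V}}+\norm{\Pi_h\f{w}-\f{w}}_{\f{V}}+\norm{\f{w}-\f{u}}_{\f{V}}\le 2\delta+\widetilde C h^{1/2}\norm{\f{w}}_{\f{H}^1(\curl)},
\end{equation*}
using \eqref{eq:Pi_h proj} for the first term and \eqref{estimate lemma} with $s=1$ for the second. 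Letting $h\to0$ and then $\delta\to0$ gives $\Pi_h\f{u}\to\f{u}$ in $\f{V}$.
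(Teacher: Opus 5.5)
Your proposal is correct and follows the same route as the paper: best approximation by the N\'ed\'elec interpolant $r_h\f{u}$, the $O(h^s)$ bound in $\f{H}(\curl)$ from Monk's Theorem 5.41, an $O(h^{s-1/2})$ bound for the boundary trace of $r_h\f{u}-\f{u}$, and the same density argument for the second claim. The only difference is that you derive the boundary bound yourself with a scaled trace inequality and a local Bramble--Hilbert argument, whereas the paper quotes it from the proof of Monk's Lemma 5.52. One minor point: if you use only that $\hat r$ reproduces constants, the reference bound carries the full $H^s(\hat T)$ norm of $\widehat{\curl\,\f{u}}$ rather than its seminorm, but the resulting extra term is at most $h^{1/2}\norm{\curl\,\f{u}}_{\f{L}^2(T)}\le h^{s-1/2}\norm{\curl\,\f{u}}_{\f{L}^2(T)}$ for $h<1$ and is harmless.
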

\subsection{Fully discrete scheme} \label{section: existence}
Since  $\varepsilon,\mu\in W^{2,\infty}(0,T;L^\infty_\text{sym}(\Omega)^{3\times 3})$ according to  \ref{ass:A1},  we may apply the canonical product rule to obtain the $\f{E}$-$\f{H}$ formulation of  the   Maxwell system \eqref{eq:maxwell intro 1}-\eqref{eq:maxwell intro 3}  as follows: 
\begin{equation}\label{eq:maxwell intro full} \tag{P}
    \begin{cases}
      \varepsilon(x,t)\partial_t\f{E}(x,t)  - \curl\, \textbf{\textit{H}}(x,t) + \f{J}(x,\f{E}(x,t),\f{H}(x,t),t) = \textbf{\textit{f}}(x,t)  & \text{in } \Omega \times (0,T)\\
      \mu(x,t)\partial_t\f{H}(x,t) + \curl\, \textbf{\textit{E}}(x,t) +\partial_t \mu(x,t) \f{H}(x,t)= 0 & \text{in } \Omega \times (0,T)\\
      (\f{E},\f{H})(\cdot,0)=(\f{E}_0,\f{H}_0) & \text{in } \Omega\\
      \boldsymbol{\nu} \times \textbf{\textit{H}}(x,t) = \boldsymbol{\nu} \times  \textbf{\textit{b}}(x, \boldsymbol{\nu} \times \textbf{\textit{E}}(x,t))   &\text{on } \partial \Omega \times (0,T)
    \end{cases}       
\end{equation}
with 
\begin{equation} \label{def J}
\f{J}(x,\xi,\eta,t) :=\f{J}_{tot}(x,\xi,\eta,t) + \partial_t\varepsilon(x,t)\xi.
\end{equation}
Due to $ \partial_t \varepsilon\in W^{1,\infty}(0,T;L^\infty_\text{sym}(\Omega)^{3\times 3})$,  \hyperref[ass:A4]{(A4)} implies  that $\f{J}$   satisfies the same estimate as $\f{J}_{tot}$ with Lipschitz constant  $L \coloneqq L_{tot}+\norm{\partial_t\varepsilon}_{W^{1,\infty}(0,T;L^{\infty}(\Omega)^{3\times 3})}$.  

To derive a fully discrete scheme for \eqref{eq:maxwell intro full}, we   fix $h>0$ and set $\tau\coloneqq \frac{T}{N}$, where $N\in \mathbb{N}$ is arbitrary but fixed, and introduce the equidistant partition of the time interval $[0,T]$ as follows:
$$0=t_0<t_1<\dots < t_N = T\quad \text{with } t_n \coloneqq n\tau\quad \forall n\in \{0,\dots,N\}.$$
Now, multiplying the first equation in \eqref{eq:maxwell intro full} with $\f{v}_h\in \textbf{ND}_h$, integrating over $\Omega$ and formally applying \eqref{eq:partial} as well as the definition of $\f{V}$, we obtain  a fully discrete scheme  of \eqref{eq:maxwell intro full}  after employing the implicit Euler time discretization and  the mixed FEM based on $\textbf{ND}_h$ and $\textbf{DG}_h$  as follows:  
\begin{equation}\label{time-space-discrete}\tag{P$_{h,N}$}
\begin{dcases} \textrm{Find $(\f{E}^n_h ,\f{H}^n_h)_{n=1}^N \subset \textbf{ND}_h \times \textbf{DG}_h$ s.t.   for all  $n\in \{1,\dots, N\}$:}\\
\int_\Omega \varepsilon(t_n) \delta \f{E}^n_h\cdot \f{v}_h - \f{H}^n_h\cdot \curl\,\f{v}_h +  \f{J}(\cdot,\f{E}^n_h,\f{H}^n_h, t_n)\cdot \f{v}_h \,dx\\
+ \int_{\partial \Omega} \f{b}(\cdot,\boldsymbol{\nu}\times \f{E}^n_h)\cdot \boldsymbol{\nu}\times \f{v}_h\,dS= \int_\Omega \f{f}^n_h\cdot \f{v}_h\,dx\quad \forall \f{v}_h\in \textbf{ND}_h \\
\int_\Omega (\mu(t_n) \delta \f{H}^n_h + \curl\,\f{E}^n_h + \partial_t \mu(t_n) \f{H}_h^n)\cdot \f{w}_h\,dx= 0\quad \forall \f{w}_h\in \textbf{DG}_h  \\
\f{E}_h^0 \coloneqq \Pi_h \f{E}_0\in \textbf{ND}_h,\quad \f{H}_h^0\coloneqq \f{Q}_h \f{H}_0 \in \textbf{DG}_h
\end{dcases}
\end{equation}
with  
$$ \f{f}_h^n \coloneqq \f{Q}_h\f{f}(t_n), \quad \delta \f{E}^n_h \coloneqq \frac{\f{E}^n_h - \f{E}^{n-1}_h}{\tau},\quad \delta\f{H}^n_h \coloneqq \frac{\f{H}^n_h - \f{H}^{n-1}_h}{\tau}\quad \forall n\in \{1,\dots,N\}.$$
Due to  $\f{H}_h^n,\delta \f{H}_h^n,\curl\,\f{E}_h^n\in \textbf{DG}_h$ and the fact that every function in $\textbf{DG}_h$ is constant on each mesh element, the second variational equation in \eqref{time-space-discrete} is equivalent to 
\begin{equation}\label{eq:discrete-equation}
    \mu_h^n \delta\f{H}_h^n + \curl\,\f{E}_h^n + \mu_{t,h}^n \f{H}_h^n = 0
\end{equation}
with
\begin{equation} \label{eq:muhnt}
\mu_h^n\coloneqq \f{Q}_h \mu(t_n) =   \sum_{T\in \mathcal{T}_h} \chi_T \frac{1}{\lvert T\rvert}\int_T  \mu(t_n)  \,dx\quad \textrm{and} \quad \mu_{t,h}^n \coloneqq \f{Q}_h \partial_t \mu(t_n)   =   \sum_{T\in \mathcal{T}_h} \chi_T \frac{1}{\lvert T\rvert}\int_T \partial_t \mu(t_n)  \,dx.
\end{equation}
Thanks to \ref{ass:A1},  $\mu_h^n$ and $\mu_{t,h}^n$ are symmetric, and it holds    for a.e. $x \in  \Omega$ and all $n \in \{1,\ldots,N\}$ that 
     \begin{equation} \label{ineq:munt}
\underline \mu \lvert \xi\rvert^2\leq \xi^T \mu_h^n(x)  \xi\leq \overline{\mu}\lvert \xi\rvert^2 \quad \forall \xi \in \mathbb R^3. 
    \end{equation} 
The next two propositions establish  well-posedness results with and without relying on a  CFL condition.
\begin{proposition}\label{theorem:existence} 
     \normalfont Let \ref{ass:A1}-\ref{ass:A5} hold. Then, for every $h>0$ and $N\in\mathbb{N}$ satisfying 
    \begin{equation}\label{eq:M(h)}
    N> M(h)\coloneqq T\min\left(\frac{\underline{\mu}h}{C_{\text{inv}}+h\norm{\partial_t \mu}_{L^\infty(0,T;L^\infty(\Omega)^{3\times 3})}},\frac{\underline{\varepsilon}}{2\left(L_{\mathrm{tot}}+\|\partial_t\varepsilon\|_{L^\infty(0,T;L^\infty(\Omega)^{3\times3})}\right)}\right)^{-1}, 
    \end{equation}
 the discrete scheme \eqref{time-space-discrete} admits a unique solution $\{(\f{E}^n_h,\f{H}^n_h)\}_{n=1}^N\subset \textbf{ND}_h \times \textbf{DG}_h$. 
\end{proposition}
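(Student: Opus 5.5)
We argue by induction on $n\in\{1,\dots,N\}$: assuming $(\f{E}^{n-1}_h,\f{H}^{n-1}_h)\in\textbf{ND}_h\times\textbf{DG}_h$ is given, we show that the $n$-th step of \eqref{time-space-discrete} has exactly one solution $(\f{E}^n_h,\f{H}^n_h)$. The idea is to eliminate $\f{H}^n_h$ through the pointwise relation \eqref{eq:discrete-equation}, which leaves a nonlinear problem in $\f{E}^n_h$ alone. We then prove that this reduced problem is governed by a continuous, strongly monotone operator on the finite-dimensional space $\textbf{ND}_h$. Condition \eqref{eq:M(h)} is equivalent to $\tau=T/N$ satisfying
\[
\tau<\frac{\underline{\mu}h}{C_{\text{inv}}+h\norm{\partial_t\mu}_{L^\infty}}
\quad\text{and}\quad
\tau<\frac{\underline\varepsilon}{2(L_{tot}+\norm{\partial_t\varepsilon}_{L^\infty})},
\]
and these are the two inequalities we will use.

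\emph{Step 1 (elimination of $\f{H}^n_h$).} Multiplying \eqref{eq:discrete-equation} by $\tau$ gives $M^n\f{H}^n_h=\mu_h^n\f{H}^{n-1}_h-\tau\,\curl\,\f{E}^n_h$ with $M^n:=\mu_h^n+\tau\mu_{t,h}^n$. The matrix $M^n$ is symmetric and elementwise constant. By \eqref{ineq:munt} and \eqref{eq:qhest2} we have $\xi^TM^n\xi\ge(\underline\mu-\tau\norm{\partial_t\mu}_{L^\infty})|\xi|^2$. The first condition on $\tau$ implies $\tau\norm{\partial_t\mu}_{L^\infty}<\underline\mu$, so this lower bound is positive. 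Hence $M^n$ is invertible, $(M^n)^{-1}$ is symmetric positive definite, and $|(M^n)^{-1}|\le(\underline\mu-\tau\norm{\partial_t\mu}_{L^\infty})^{-1}$. We obtain
\[
\f{H}^n_h=\mathcal H(\f{E}^n_h):=(M^n)^{-1}\bigl(\mu_h^n\f{H}^{n-1}_h-\tau\,\curl\,\f{E}^n_h\bigr)\in\textbf{DG}_h,
\]
and the second equation in \eqref{time-space-discrete} is equivalent to this formula.

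\emph{Step 2 (reduced problem and strong monotonicity).} Substituting $\mathcal H$ into the first equation yields $\mathcal A(\f{E}^n_h)=\ell$ in $\textbf{ND}_h^*$, where
\[
\langle\mathcal A(\f{u}),\f{v}\rangle=\int_\Omega\tfrac1\tau\varepsilon(t_n)\f{u}\cdot\f{v}+\tau(M^n)^{-1}\curl\,\f{u}\cdot\curl\,\f{v}+\f{J}(\cdot,\f{u},\mathcal H(\f{u}),t_n)\cdot\f{v}\,dx+\int_{\partial\Omega}\f{b}(\cdot,\boldsymbol\nu\times\f{u})\cdot\boldsymbol\nu\times\f{v}\,dS .
\]
All data terms ($\f{f}^n_h$, $\f{E}^{n-1}_h$, and the $\f{H}^{n-1}_h$-part of $\mathcal H$ tested with $\curl\,\f{v}$) are collected in $\ell$. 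For $\f{u},\f{u}'\in\textbf{ND}_h$ set $\f{e}=\f{u}-\f{u}'$. Then $\langle\mathcal A(\f{u})-\mathcal A(\f{u}'),\f{e}\rangle$ splits into four contributions:
\begin{itemize}
\item the $\varepsilon$-term gives at least $\frac{\underline\varepsilon}{\tau}\norm{\f{e}}^2_{\f{L}^2}$;
\item the $(M^n)^{-1}$-term is nonnegative;
\item the boundary term is nonnegative by the monotonicity in \ref{ass:A5};
\item the $\f{J}$-term is bounded below by $-L'\bigl(\norm{\f{e}}_{\f{L}^2}+\norm{\mathcal H(\f{u})-\mathcal H(\f{u}')}_{\f{L}^2}\bigr)\norm{\f{e}}_{\f{L}^2}$.
\end{itemize}
Here $L':=L_{tot}+\norm{\partial_t\varepsilon}_{L^\infty}$ is the Lipschitz constant of $\f{J}$ in $(\xi,\eta)$ at the fixed time $t_n$, by \eqref{eq:J_tot-estimate} and \eqref{def J}.

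The key estimate, and the only place where $h$ enters, combines Step 1 with the inverse estimate \eqref{eq:inverse}:
\[
\norm{\mathcal H(\f{u})-\mathcal H(\f{u}')}_{\f{L}^2}\le\frac{\tau\norm{\curl\,\f{e}}_{\f{L}^2}}{\underline\mu-\tau\norm{\partial_t\mu}_{L^\infty}}\le\frac{\tau C_{\text{inv}}}{h(\underline\mu-\tau\norm{\partial_t\mu}_{L^\infty})}\norm{\f{e}}_{\f{L}^2}<\norm{\f{e}}_{\f{L}^2}.
\]
The last inequality is exactly the first condition on $\tau$, rearranged as $\tau(C_{\text{inv}}+h\norm{\partial_t\mu}_{L^\infty})<\underline\mu h$. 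Therefore
\[
\langle\mathcal A(\f{u})-\mathcal A(\f{u}'),\f{e}\rangle\ge\Bigl(\frac{\underline\varepsilon}{\tau}-2L'\Bigr)\norm{\f{e}}^2_{\f{L}^2},
\]
and the coefficient is positive by the second condition on $\tau$. This gives strong monotonicity with respect to the $\f{L}^2$-norm, which is a norm on the finite-dimensional space $\textbf{ND}_h$ and hence equivalent to any other norm there.

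\emph{Step 3 (conclusion).} $\mathcal A$ is continuous on $\textbf{ND}_h$. Indeed, $\f{J}$ is Lipschitz in $(\xi,\eta)$, $\mathcal H$ is affine, and $\f{b}$ is continuous with the linear growth bound \eqref{eq:propertyb}, so the Nemytskii map $\f{u}\mapsto\f{b}(\cdot,\boldsymbol\nu\times\f{u})$ is continuous into $\f{L}^2(\partial\Omega)$. A continuous, strongly monotone operator on a finite-dimensional space is bijective, by the Browder--Minty theorem (equivalently, Brouwer's fixed point theorem together with coercivity for existence, and monotonicity for uniqueness). Hence $\f{E}^n_h$ exists and is unique, and $\f{H}^n_h=\mathcal H(\f{E}^n_h)$ is then uniquely determined by Step 1, which completes the induction.

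The main obstacle is the estimate in Step 2. Because $\f{J}$ depends on $\f{H}$, and $\f{H}$ depends on $\curl\,\f{E}$, the coupling term can only be absorbed into the $\varepsilon/\tau$ term by means of the inverse estimate. This is why the time step is restricted through the CFL-type first condition in \eqref{eq:M(h)}, in addition to the second condition, which is only needed to absorb the Lipschitz constant of $\f{J}$.
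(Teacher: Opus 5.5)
Your proof is correct, but it is organized differently from the paper's.

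\emph{What is shared.} Both arguments eliminate $\f{H}^n_h$ through $\mu_h^n+\tau\mu_{t,h}^n$. Both rest on the same key bound $\tau\norm{(\mu_h^n+\tau\mu_{t,h}^n)^{-1}\curl\,\f{e}}_{\f{L}^2(\Omega)}\le\norm{\f{e}}_{\f{L}^2(\Omega)}$, obtained from \eqref{eq:inverse} and the first part of the CFL condition.

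\emph{The paper's route.} The paper freezes the arguments of $\f{J}$ at a given $\f{q}_h$. It solves the resulting problem, whose only nonlinearity is the monotone boundary term, by Minty--Browder. It then shows that the map $\f{q}_h\mapsto\f{e}_h$ is an $\f{L}^2$-contraction with constant $2(L_{tot}+\norm{\partial_t\varepsilon}_{L^\infty})\tau/\underline{\varepsilon}$. Banach's fixed point theorem then gives existence and uniqueness at once.

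\emph{Your route.} You keep $\f{J}$ inside the operator and prove strong monotonicity of the full reduced operator. Up to the factor $\tau$, this is the operator $\f{F}$ from the proof of Proposition \ref{theorem:existence2}. A single application of Browder--Minty then suffices. Your positivity condition $\underline{\varepsilon}/\tau-2L'>0$ is exactly the paper's contraction condition.

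\emph{What each buys.} The paper's nested argument is constructive: the iteration $\f{q}_h\mapsto\Phi(\f{q}_h)$ is the solver actually used in Section \ref{section:numericalresults}. Your argument is shorter and avoids the auxiliary map.

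\emph{A possible improvement.} In your formulation you discard the nonnegative term $\tau((M^n)^{-1}\curl\,\f{e},\curl\,\f{e})_{\f{L}^2(\Omega)}$. It could instead absorb the $\f{H}$-coupling via Young's inequality. This would replace the inverse estimate by an $h$-independent step-size restriction, consistent with the paper's remark that \eqref{eq:CFL} is not optimal.
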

\begin{proof}
 Let $h>0$ and  $N \in \mathbb N$ satisfy \eqref{eq:M(h)}  and set $\tau= T/N$. We argue inductively and assume that, for some fixed $n\in \{1,\dots,N\}$, the solution $(\f{E}^{n-1}_h,\f{H}^{n-1}_h) \in \textbf{ND}_h \times \textbf{DG}_h$ for the previous time step is already given. Based on \eqref{eq:M(h)}, we have
 $$\tau\left(\frac{C_\text{inv}}{h} + \norm{\partial_t \mu}_{L^\infty(0,T;L^\infty(\Omega)^{3\times 3})}\right) \leq \underline{\mu}.$$
 Setting $G_n\coloneqq \mu_h^n + \tau \mu_{t,h}^n$ and using \eqref{eq:muhnt}-\eqref{ineq:munt},  we see that for a.e. $x\in \Omega$,
 \begin{equation} \label{eq:G_n}
    \frac{\tau C_\text{inv}}{h}\lvert \xi\rvert^2 <  (\underline{\mu}-\tau \norm{\partial_t \mu}_{L^\infty(0,T;L^\infty(\Omega)^{3\times 3})})\lvert \xi\rvert^2 \le \xi^T G_n(x) \xi   \le (\overline{\mu}+\tau \norm{\partial_t \mu}_{L^\infty(0,T;L^\infty(\Omega)^{3\times 3})})\lvert \xi\rvert^2  \quad \forall \xi \in \mathbb R^3.
 \end{equation}
In particular, $G_n$ is  uniformly positive definite. In view of \eqref{eq:discrete-equation}, we can therefore substitute  
 \begin{equation}\label{eq:Hformula}
 \f{H}^n_h\coloneqq G_n^{-1}(\mu_h^n\f{H}^{n-1}_h - \tau \curl\,\f{E}^n_h)
 \end{equation}
 into the variational equality in (P$_{N,h}$) and multiply  with $\tau$ to  infer  that a solution $\f{E}^n_h \in \textbf{ND}_h$ of \eqref{time-space-discrete} can be determined by solving the fixed point equation $\Phi(\f{E}^n_h) = \f{E}^n_h$, where  $\Phi : \textbf{ND}_h \to \textbf{ND}_h$ is defined as the solution operator of the  following variational problem: For every $\f{q}_h \in \textbf{ND}_h$, $\f{e}_h:= \Phi(\f{q}_h) \in  \textbf{ND}_h$ denotes the unique solution to
        \begin{equation}\label{eq:fixx}
        \begin{split}
    & \int_\Omega \varepsilon(t_n) \f{e}_h \cdot \f{v}_h +\tau^2 G_n^{-1} \curl\,\f{e}_h \cdot \curl\,\f{v}_h\,dx+ \tau \int_{\partial \Omega} \f{b}(\cdot,\boldsymbol{\nu}\times \f{e}_h )\cdot \boldsymbol{\nu}\times \f{v}_h\,dS=\\
    & \int_\Omega  \Bigl(\varepsilon(t_n)\f{E}^{n-1}_h +\tau \f{f}^n_h - \tau  \f{J}(\cdot,\f{q}_h,G_n^{-1}(\mu_h^n \f{H}_h^{n-1} - \tau \curl\,\f{q}_h),t_n)\Bigr)\cdot \f{v}_h\,dx\\
    & +\int_\Omega  \tau G_n^{-1}\mu_h^n\f{H}^{n-1}_h \cdot \curl\,\f{v}_h \,dx \quad \forall \f{v}_h\in \textbf{ND}_h.
\end{split}
\end{equation}
Due to  \ref{ass:A1}, \ref{ass:A5}, and the positive definitneness of $G_n^{-1}$, the left-hand side in \eqref{eq:fixx} defines a strictly monotone, hemicontinuous and coercive operator $T:\textbf{ND}_h \to \textbf{ND}_h^*$ (regarding the coercivity, we refer to the proof of  \cite[Lemma 3.1]{nicaise1}). Thus, since the right-hand side in \eqref{eq:fixx}  represents an element of $\textbf{ND}_h^*$,  the Minty--Browder theorem implies that \eqref{eq:fixx} admits a unique solution $\f{e}_h$. In conclusion, the solution operator $\Phi : \textbf{ND}_h \to \textbf{ND}_h,\f{q}_h\mapsto \f{e}_h$, is well-defined. Now, let  $\f{q}_h,\hat{\f{q}}_h\in \textbf{ND}_h$ and set $\f{e}_h\coloneqq \Phi(\f{q}_h),\hat{\f{e}}_h\coloneqq \Phi(\hat{\f{q}}_h)$. Testing \eqref{eq:fixx} with $\f{v}_h=\f{e}_h -\hat{\f{e}}_h$ and testing the associated variational formulation of $\hat{\f{e}}_h$ with $\f{v}_h=\hat{\f{e}}_h - \f{e}_h$,  we obtain after adding the resulting equalities and  employing the monotonicity of $\f{b}$ that 
      \begin{align*}
          & \underline{\varepsilon}\norm{\f{e}_h-\hat{\f{e}}_h }_{\f{L}^2(\Omega)}^2 + \tau^2\norm{\curl(\f{e}_h-\hat{\f{e}}_h)}_{\f{L}_{G_n^{-1}}^2(\Omega)}^2\\
          & \leq \tau \norm{\f{J}(\cdot,\f{q}_h,G_n^{-1}(\mu_h^n \f{H}_h^{n-1} - \tau \curl\,\f{q}_h),t_n)-\f{J}(\cdot,\hat{\f{q}}_h,G_n^{-1}(\mu_h^n \f{H}_h^{n-1} - \tau \curl\,\hat{\f{q}}_h),t_n)}_{\f{L}^2(\Omega)}\norm{\f{e}_h-\hat{\f{e}}_h}_{\f{L}^2(\Omega)}.
      \end{align*}
    Consequently, it holds that
          \begin{align*}
          \norm{\f{e}_h-\hat{\f{e}}_h }_{\f{L}^2(\Omega)}   
  &\leq \frac{\tau}{ \underline{\varepsilon}} \norm{\f{J}(\cdot,\f{q}_h,G_n^{-1}(\mu_h^n\f{H}^{n-1}_h - \tau \curl\,\f{q}_h),t_n)-\f{J}(\cdot,\hat{\f{q}}_h,G_n^{-1}(\mu_h^n\f{H}^{n-1}_h - \tau \curl\,\hat{\f{q}}_h),t_n)}_{\f{L}^2(\Omega)}  \\
&\underbrace{\leq}_{\eqref{def J},\hyperref[ass:A4]{(A4)}}   \frac{(L_{tot}+\norm{\partial_t\varepsilon}_{L^\infty(0,T;L^\infty(\Omega)^{3\times 3})})\tau}{\underline{\varepsilon}} (\norm{\f{q}_h-\hat{\f{q}}_h}_{\f{L}^2(\Omega)}+\tau\norm{G_n^{-1}\curl(\f{q}_h-\hat{\f{q}}_h)}_{\f{L}^2(\Omega)})\\
& \underbrace{\leq}_{\eqref{eq:inverse},\eqref{eq:G_n}} \frac{2(L_{tot}+\norm{\partial_t\varepsilon}_{L^\infty(0,T;L^\infty(\Omega)^{3\times 3})})\tau}{\underline{\varepsilon}}\norm{\f{q}_h - \hat{\f{q}}_h}_{\f{L}^2(\Omega)}.
      \end{align*}
Since  $\textbf{ND}_h$ is finite dimensional, $\{\textbf{ND}_h, \|\cdot\|_{\f{L}^2(\Omega)} \}$ is a Banach space. Therefore, 
by \eqref{eq:M(h)}, $\Phi$ is a contraction, and Banach's fixed point theorem   implies that $\Phi$  admits a unique fixed point $\f{E}^n_h \in \textbf{ND}_h$.
\end{proof}
\begin{remark}\label{rem:wellposedness0}
 The inequality \eqref{eq:M(h)} is equivalent to the CFL-type condition
    \begin{equation}\label{eq:CFL}
        \tau < \min\left(\frac{\underline{\mu}h}{C_{\text{inv}} + h\norm{\partial_t \mu}_{L^\infty(0,T;L^\infty(\Omega)^{3\times 3})}},\frac{\underline{\varepsilon}}{2(L_{tot}+\norm{\partial_t\varepsilon}_{L^\infty(0,T;L^\infty(\Omega)^{3\times 3})})}\right)
    \end{equation}
    and arises due to the application of the discrete inverse curl estimate \eqref{eq:inverse} in our proof and the time-dependence of the magnetic permeability. We also note that this estimate is not optimal.
    \end{remark}
\begin{proposition} \label{theorem:existence2}
    \normalfont Let \ref{ass:A1}-\ref{ass:A5} hold and let $\partial_t \mu$ be uniformly positive semidefinite. Assume that there exists a function $\hat{c}:\Omega \times [0,T]\to \mathbb{R}$ with $\hat{c}(\cdot,t)\in L^1(\Omega)$ for all $t\in [0,T]$ such that
\begin{equation}\label{eq:J-weak-monoton}
    \f{J}(x,\xi,\eta,t)\cdot \xi\geq  - c_1\lvert \xi\rvert^p - c_2\lvert \eta\rvert^p -\hat{c}(x,t) \quad \text{for a.e. } x\in \Omega \text{ and all } \xi,\eta\in \mathbb{R}^3,t\in [0,T],
\end{equation}
with $p\in \{1,2\}$ and  positive constants $c_1,c_2 >0$, independent of $x,\xi,\eta,t$, satisfying
\begin{equation}\label{eq:con p}
\begin{cases}  0 < c_1 <   \underline \varepsilon/ T   &\textrm{ if $p=2$}\\ 0 < c_1 <\infty   &\textrm{ if $p=1$}
\end{cases}, \quad \begin{cases}
 0 < c_2 < \underline \mu/ 2T   &\textrm{ if $p=2$} \\
  0< c_2 < \infty  &\textrm{ if $p=1$.}\end{cases}
 \end{equation}
  Then, for every $h>0$ and $N\in \mathbb{N}$, the discrete scheme \eqref{time-space-discrete} admits a solution $\{(\f{E}^n_h,\f{H}^n_h)\}_{n=1}^N\subset \textbf{ND}_h \times \textbf{DG}_h$. Furthermore, if there exist constants $c_E,c_H > 0$ with $Tc_E < \underline{\varepsilon},Tc_H< \underline{\mu}$ such that
\begin{equation}\label{eq:J-unique}
    (\f{J}(x,\xi,\eta,t) - \f{J}(x,\xi',\eta',t))\cdot (\xi - \xi')\geq -c_E\lvert \xi - \xi'\rvert^2 - c_H\lvert \eta - \eta'\rvert^2
\end{equation}
holds for a.e. $x\in \Omega$ and all $\xi,\xi',\eta,\eta'\in \mathbb{R}^3,t\in [0,T]$, then \eqref{time-space-discrete} admits at most one solution.
\end{proposition}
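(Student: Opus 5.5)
We argue inductively in $n$, exactly as in the proof of Proposition \ref{theorem:existence}. Suppose $(\f{E}^{n-1}_h,\f{H}^{n-1}_h)\in \textbf{ND}_h\times\textbf{DG}_h$ is already given. Since $\partial_t\mu$ is uniformly positive semidefinite, the matrix $\mu_{t,h}^n=\f{Q}_h\partial_t\mu(t_n)$ is positive semidefinite. Hence $G_n=\mu_h^n+\tau\mu_{t,h}^n\ge \underline\mu$ holds without any CFL condition. We may therefore eliminate $\f{H}^n_h$ through \eqref{eq:Hformula}, and the $n$-th step reduces to a single nonlinear equation $\mathcal{A}_n(\f{E}^n_h)=0$ in $\textbf{ND}_h^*$. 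Here $\mathcal{A}_n$ collects all terms of \eqref{eq:fixx}, with $\f{q}_h$ replaced by $\f{e}_h$, including the $\f{J}$-term. Because $\textbf{ND}_h$ is finite dimensional and $\mathcal{A}_n$ is continuous (by \ref{ass:A4} and the continuity of $\f{b}$ together with its linear growth in \ref{ass:A5}), we apply the Brouwer-type corollary: if $\langle \mathcal{A}_n(\f{e}_h),\f{e}_h\rangle>0$ for all $\f{e}_h$ with $\norm{\f{e}_h}=R$ in some norm, then a zero exists in the ball of radius $R$.

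To check this coercivity, we test with $\f{e}_h$. This produces the terms
\[
\int_\Omega \varepsilon(t_n)\f{e}_h\cdot\f{e}_h + \tau^2 G_n^{-1}\curl\,\f{e}_h\cdot\curl\,\f{e}_h\,dx,
\]
the boundary term $\tau\int_{\partial\Omega}\f{b}(\cdot,\boldsymbol{\nu}\times\f{e}_h)\cdot\boldsymbol{\nu}\times\f{e}_h\,dS$, which is $\ge \tau\underline b\norm{\boldsymbol\nu\times\f e_h}^2 - C$ by \eqref{eq:propertyb}, and linear terms in $\f{e}_h$ that are harmless. The only delicate term is $\tau\int_\Omega \f{J}(\cdot,\f{e}_h,\f{H},t_n)\cdot\f{e}_h\,dx$ with $\f{H}=G_n^{-1}(\mu_h^n\f{H}^{n-1}_h-\tau\curl\,\f{e}_h)$. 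By \eqref{eq:J-weak-monoton} it is bounded below by $-\tau c_1\norm{\f{e}_h}^p_{L^p}-\tau c_2\norm{\f{H}}^p_{L^p}-\tau\norm{\hat c(t_n)}_{L^1}$.

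For $p=1$ these terms grow only linearly and are absorbed at once. For $p=2$ we need $\tau c_1<\underline\varepsilon$, which follows from $\tau\le T$ and $c_1<\underline\varepsilon/T$. We also need
\[
\tau c_2\norm{\f H}^2\le 2\tau c_2\norm{G_n^{-1}\mu_h^n\f{H}^{n-1}_h}^2+2\tau^3c_2\norm{G_n^{-1}\curl\,\f{e}_h}^2
\]
to be absorbed by $\tau^2(G_n^{-1}\curl\,\f e_h,\curl\,\f e_h)$. This requires $2\tau c_2\norm{G_n^{-1}}_\infty<1$, i.e. $2\tau c_2<\underline\mu$, which holds since $2c_2T<\underline\mu$ and $\tau\le T$. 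This absorption is precisely where the constants in \eqref{eq:con p} enter, and it is the step I expect to need the most care: one must use the positive semidefiniteness of $\mu_{t,h}^n$ to bound $G_n^{-1}\le \underline\mu^{-1}$ with the correct constant. Once it is done, the leading quadratic form is positive definite on $\textbf{ND}_h$ (in the norm \eqref{def: norm V}), so for large $R$ the inner product is positive and a solution exists at each step.

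For uniqueness, we take two solutions and argue inductively with equal data at step $n-1$. Let $\f e=\f E^n_h-\hat{\f E}^n_h$ and $\f k=\f H^n_h-\hat{\f H}^n_h$. We subtract the first equations and test with $\tau\f e$, and subtract the equations \eqref{eq:discrete-equation} and test with $\tau\f k$. Adding the results, the curl terms cancel and the $\f b$-term is nonnegative by monotonicity. Using \eqref{eq:J-unique} and $\mu_{t,h}^n\ge0$, we obtain
\[
(\underline\varepsilon-\tau c_E)\norm{\f e}^2+(\underline\mu-\tau c_H)\norm{\f k}^2\le 0 .
\]
Since $\tau\le T$, we have $\tau c_E<\underline\varepsilon$ and $\tau c_H<\underline\mu$, so $\f e=0$ and $\f k=0$. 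By induction, the solution is unique.
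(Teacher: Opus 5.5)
Your proposal is correct and follows essentially the same route as the paper. The paper also eliminates $\f{H}^n_h$ using $G_n\ge\underline\mu$ (from $\mu_{t,h}^n\ge 0$), verifies the acute-angle condition (citing \cite[Lemma 18.2]{Vainberg}) by bounding the $\f{J}$-term via \eqref{eq:J-weak-monoton} and absorbing the $p=2$ contributions exactly as you do, and proves uniqueness by the same energy test with \eqref{eq:J-unique}, the monotonicity of $\f{b}$ and $\mu_{t,h}^n\ge 0$. Your explicit continuity check and the coercive lower bound for the boundary term are harmless additions; the paper only uses $\f{b}(\cdot,\xi)\cdot\xi\ge 0$ and leaves continuity implicit.
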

\begin{proof} Let $h>0$ and $N \in \mathbb N$ be arbitrarily fixed and  set $\tau= T/N$. As before,  we argue inductively and assume that a solution $(\f{E}^{n-1}_h,\f{H}^{n-1}_h) \in \textbf{ND}_h \times \textbf{DG}_h$ for the previous time step is already given for some fixed $n\in \{1,\dots,N\}$.
    As  $\partial_t \mu$ is uniformly postive semidefinite, we obtain from \eqref{eq:muhnt}-\eqref{ineq:munt} that $G_n\coloneqq \mu_h^n + \tau \mu_{t,h}^n$ is  uniformly positive definite with 
    \begin{equation}\label{eq:G_n2}
       \underline{\mu}\lvert\xi\rvert^2 \le   \xi^T G_n \xi   \quad \forall \xi\in \mathbb{R}^3.
    \end{equation}
 We   now introduce the operator $\f{F}:\textbf{ND}_h \to \textbf{ND}_h^*$  given by 
        \begin{align}\label{eq:F}
  & \forall \f{e}_h, \f{v}_h \in  \textbf{ND}_h: \quad  \langle  \f{F}(\f{e}_h),  \f{v}_h\rangle_{\textbf{ND}_h^*,\textbf{ND}_h}:=  \\ \notag
   &  \int_\Omega \varepsilon(t_n) \f{e}_h \cdot \f{v}_h +\tau^2 G_n^{-1} \curl\,\f{e}_h \cdot \curl\,\f{v}_h\,dx + \tau \int_{\partial \Omega} \f{b}(\cdot,\boldsymbol{\nu}\times \f{e}_h )\cdot \boldsymbol{\nu}\times \f{v}_h\,dS  \\ \notag
    & -   \int_\Omega \left(\varepsilon(t_n)\f{E}^{n-1}_h 
    +\tau \f{f}^n_h - \tau  \f{J}(\cdot,\f{e}_h,G_n^{-1}(\mu_h^n \f{H}_h^{n-1} - \tau \curl\, \f{e}_h),t_n)\right)\cdot \f{v}_h +  \tau G_n^{-1}\mu_h^n \f{H}^{n-1}_h \cdot \curl\,\f{v}_h \,dx.       \end{align}
By the construction of $\f{F}:\textbf{ND}_h \to \textbf{ND}_h^*$,  we know from the proof of Proposition \ref{theorem:existence} that $\f{E}^n_h \in \textbf{ND}_h$ is a solution to  (P$_{N,h}$) iff
$$
 \Phi(\f{E}^n_h)= \f{E}^n_h \quad  \iff  \quad \f{F}(\f{E}^n_h)=0.
$$
 Thus, our aim   is to find  a zero of  $\f{F}:\textbf{ND}_h \to \textbf{ND}_h^*$.   Due to \eqref{eq:J-weak-monoton},  it holds for every $\f{e}_h \in \textbf{ND}_h$ that 
    \begin{align}
   &\notag \tau \int_\Omega \f{J}(\cdot,\f{e}_h,G_n^{-1}(\mu_h^n \f{H}_h^{n-1} - \tau\curl\,\f{e}_h),t_n)\cdot \f{e}_h\,dx \\   \geq \notag
   &-\tau c_1\norm{\f{e}_h}_{\f{L}^p(\Omega)}^p 
   -\tau c_2\norm{G_n^{-1}\mu_h^n\f{H}_h^{n-1} + \tau^2 G_n^{-1}\curl\,\f{e}_h}_{\f{L}^p(\Omega)}^p -\tau \norm{\hat{c}(\cdot,t_n)}_{\f{L}^1(\Omega)}  =: I_p.
\end{align}
If $p=2$, by using \eqref{eq:uniformposdef} and  \eqref{eq:G_n2}, we obtain
$$
\begin{aligned}
 & I_2      \ \  \geq \  \ \, - \frac{\tau c_1}{\underline \varepsilon} \norm{\f{e}_h}_{\f{L}^2_{\varepsilon(t_n)}(\Omega)}^2 - \frac{2\tau^3c_2}{\underline{\mu}}(G_n^{-1}\curl\,\f{e}_h,\curl\,\f{e}_h)_{\f{L}^2(\Omega)}-\beta_2 \\ &\underbrace{\ge}_{\tau = T/N, \eqref{eq:con p}}        - \frac{\vartheta}{N} \norm{\f{e}_h}_{\f{L}^2_{\varepsilon(t_n)}(\Omega)}^2 - \frac{\tau^2\vartheta}{N}(G_n^{-1}\curl\,\f{e}_h,\curl\,\f{e}_h)_{\f{L}^2(\Omega)}-\beta_2 \quad \forall  \f{e}_h \in \textbf{ND}_h
\end{aligned}
$$
with   $\beta_2 >0$ and $\vartheta \in (0,1)$, independent of $\f{e}_h$. If $p=1$, a direct application of Young's inequality yields  
$$
I_1      \geq - \frac{\vartheta}{N} \norm{\f{e}_h}_{\f{L}^2_{\varepsilon(t_n)}(\Omega)}^2 - \frac{ \tau^2\vartheta}{N}(G_n^{-1}\curl\,\f{e}_h,\curl\,\f{e}_h)_{\f{L}^2(\Omega)}-\beta_1  \quad \forall  \f{e}_h \in \textbf{ND}_h
$$
with a positive constant $\beta_1 >0$  independent of $\f{e}_h$. Inserting $\f v_h = \f e_h$ in  \eqref{eq:F} and invoking  the previous estimates and the monotonicity of $\f{b}$, we arrive at
\begin{equation}\label{eq:coerz}
 \langle \f{F}(\f{e}_h),\f{e}_h\rangle_{\textbf{ND}_h^*,\textbf{ND}_h}
  \geq  (1- \vartheta {N}^{-1})(\norm{\f{e}_h}_{\f{L}^2_{\varepsilon(t_n)} (\Omega)}^2 + \tau^2(G_n^{-1}\curl\,\f{e}_h,\curl\,\f{e}_h)_{\f{L}^2(\Omega)}  ) - \max\{\beta_1,\beta_2\}.
\end{equation} Now, let us consider $\textbf{ND}_h$ endowed with the inner product $(\cdot,\cdot)_{\f{L}^2_{\varepsilon(t_n)}(\Omega)}$ and let $\boldsymbol{\mathcal{R}}:\textbf{ND}_h \to \textbf{ND}_h^*$ denote the corresponding Riesz isomorphism. Setting  ${\boldsymbol{\mathcal F}}\coloneqq \boldsymbol{\mathcal{R}}^{-1}\circ \f{F}$,   \eqref{eq:coerz} implies  the existence of a positive constant $R>0$ such that
$$(\boldsymbol{\mathcal{F}}(\f{e}_h),\f{e}_h)_{\textbf{ND}_h} =  \langle \f{F}(\f{e}_h),\f{e}_h\rangle_{\textbf{ND}_h^*,\textbf{ND}_h}> 0\quad \text{for all } \f{e}_h\in \textbf{ND}_h \text{ with } \norm{\f{e}_h}_{\f{L}^2_{\varepsilon(t_n)}(\Omega)}=R.$$
As a consequence, \cite[Lemma 18.2]{Vainberg} implies the existence of $\f{E}^n_h\in \textbf{ND}_h$ with ${\f{F}}(\f{E}^n_h)=0$. 

Next, assume that \eqref{eq:J-unique} holds. Let $(\f{E}_h^n,\f{H}_h^n),(\hat{\f{E}}_h^n,\hat{\f{H}}_h^n)$ denote two solutions of the $n$-th step in \eqref{time-space-discrete}. By subtracting the respective systems from each other, testing suitably and using the positive semidefiniteness of $\partial_t \mu$, the monotonicity of $\f{b}$ and \eqref{eq:J-unique}, we arrive at
$$0\geq \left(\frac{\underline{\varepsilon}}{\tau}-c_E\right)\norm{\f{E}_h^n - \hat{\f{E}}_h^n}_{\f{L}^2(\Omega)}^2 + \left(\frac{\underline{\mu}}{\tau}-c_H\right) \norm{\f{H}_h^n - \hat{\f{H}}_h^n}_{\f{L}^2(\Omega)}^2.$$
By our assumption, the terms in the brackets are positive, which completes our proof.
\end{proof}
\begin{remark} \label{rem:notmonotone}  Let us consider the cut-off function $\theta: \mathbb R \to [-1,1], \theta(x) := \max\{-1,\min\{x,1\} \}$ and
$$
\f{J}(x,\xi,\eta,t)  = \f{J}(\xi,\eta)=  \alpha_1\xi + \alpha_2(\theta(\eta_1),\theta(\eta_2),\theta(\eta_3))  \quad  \textrm{for a.e. } x\in \Omega \text{ and all } \xi,\eta\in \mathbb{R}^3,t\in [0,T]
$$
with positive constants $\alpha_1,\alpha_2>0$. 
For every fixed $\xi,\eta \in \mathbb R^3$,  the mappings $\f{J}(\cdot,\eta): \mathbb R^3 \to \mathbb R^3$ and  $\f{J}(\xi,\cdot): \mathbb R^3 \to \mathbb R^3$ are monotone, but 
$$
(\f{J}(\xi,\eta) -\f{J}(\xi',\eta'),\xi -\xi')_{\mathbb R^3} \ge 0  \quad \forall \xi,\xi',\eta,\eta' \in \mathbb R^3
$$
is not valid. Thus, although $\f{J}$ is monotone with respect to $\xi$ and $\eta$, the induced  nonlinearity $$
 \boldsymbol{\mathcal J}:  \f L^2(\Omega) \times \f L^2(\Omega) \to \f L^2(\Omega) \times \f L^2(\Omega), \quad  \boldsymbol{\mathcal J}(\f u, \f v) := (\f{J}(\f u, \f v),0),  
$$
that appears in the Maxwell system \eqref{eq:maxwell intro full} and the fully discrete scheme \eqref{time-space-discrete}, is not monotone. This simple case shows that the  Minty--Browder argument in general cannot be applied in our analysis. However, straightforward computations yield  
$
\f{J}(\xi,\eta) \cdot \xi =  \alpha_1 |\xi|^2  - \alpha_2\sqrt{3}|\xi| \ge  - \alpha_2\sqrt{3}|\xi|
$
for all  $\xi,\eta \in \mathbb R^3$.
As a conclusion, Proposition \ref{theorem:existence2} is applicable for this example.
\end{remark}
  
\subsection{Stability}\label{section: stability}
We begin by introducing the quantity 
 \begin{equation}\label{eq:delh0}
  \delta \f{H}_h^0 \coloneqq -(\mu_h^0)^{-1} (\mu_{t,h}^0 \f{H}_h^0 + \curl\,\f{E}_h^0)\in \textbf{DG}_h.
  \end{equation}
Furthermore, we define $\delta \f{E}_h^0\in \textbf{ND}_h$ as the unique solution of the variational problem
    \begin{align}\notag
    & \int_\Omega \varepsilon(0)\delta \f{E}^0_h \cdot \f{v}_h -\f{H}_h^0 \cdot \curl\,\f{v}_h +  \f{J}(\cdot,\f{E}^0_h,\f{H}_h^0,0)\cdot \f{v}_h\,dx\\ \label{eq:var-initial}
    & +\int_{\partial \Omega} \f{b}(\cdot,\boldsymbol{\nu}\times \f{E}^0_h)\cdot \boldsymbol{\nu}\times \f{v}_h\,dS = \int_\Omega \f{f}^0_h\cdot \f{v}_h\,dx\quad \forall \f{v}_h\in \textbf{ND}_h,
\end{align}
for which the existence and uniqueness of a solution follow directly from the Lax-Milgram Lemma as well as the norm equivalence of $\norm{\cdot}_\f{V}$ and $\norm{\cdot}_{\f{L}^2(\Omega)}$ on $\textbf{ND}_h$. By the specific construction of $ (\delta \f{E}_h^0, \delta \f{H}_h^0)$, we observe that \eqref{time-space-discrete} is satisfied at the initial time step $n = 0$. This property along with  the boundedness of  $\{\delta \f{E}_h^0\}_{h>0}$ in $\f{L}^2(\Omega)$ (see below) is  crucial  for our stability result. The proof relies also on the  choice  $(\f{E}^0_h,\f{H}^0_h)=(\Pi_h \f{E}_0,\f{Q}_h \f{H}_0)$,  which  together with  Lemma \ref{lemma:V} and \eqref{eq:qhest} yields that
\begin{equation}\label{eq:initialvalues}
    (\f{E}_h^0,\f{H}_h^0)\quad \to \quad (\f{E}_0,\f{H}_0)\quad \text{in } \f{V}\times \f{L}^2(\Omega)\text{ as } h\to 0.
\end{equation}

 \begin{lemma}\label{lemma:initial-bounded} \normalfont Let Assumption \ref{ass} hold. Then 
      $\{\delta\f{E}_h^0\}_{h>0}\subset \f{L}^2(\Omega)$ is bounded.
\end{lemma}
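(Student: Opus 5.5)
The plan is to test \eqref{eq:var-initial} with $\f{v}_h=\delta\f{E}_h^0$ and rewrite the curl term and the boundary term so that only quantities that are uniformly bounded in $h$ remain. The critical term is $\int_\Omega \f{H}_h^0\cdot\curl\,\delta\f{E}_h^0\,dx$: since $\delta\f{E}_h^0$ is only controlled in $\f{L}^2(\Omega)$, we must move the curl onto $\f{H}_0$ and use the compatibility condition \ref{ass:A3} to cancel the boundary contribution against the $\f{b}$-term. This is where \ref{ass:A6} and the choice $\f{E}_h^0=\Pi_h\f{E}_0$ enter.

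\emph{Step 1: remove $\f{Q}_h$.} Because $\curl\,\delta\f{E}_h^0\in\textbf{DG}_h$, \eqref{eq:qhest} gives
\[
\int_\Omega \f{H}_h^0\cdot\curl\,\delta\f{E}_h^0\,dx=\int_\Omega \f{H}_0\cdot\curl\,\delta\f{E}_h^0\,dx .
\]

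\emph{Step 2: integrate by parts.} Since $\f{H}_0\in\f{V}$ and $\delta\f{E}_h^0\in\textbf{ND}_h\subset\f{V}$, \eqref{eq:partial} yields
\[
\int_\Omega \f{H}_0\cdot\curl\,\delta\f{E}_h^0\,dx=\int_\Omega \curl\,\f{H}_0\cdot\delta\f{E}_h^0\,dx+\int_{\partial\Omega}(\f{H}_0)_\tau\cdot\boldsymbol{\nu}\times\delta\f{E}_h^0\,dS .
\]
By \ref{ass:A3}, $(\f{H}_0)_\tau=(\f{b}(\cdot,\boldsymbol{\nu}\times\f{E}_0))_\tau$. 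Pairing with the tangential field $\boldsymbol{\nu}\times\delta\f{E}_h^0$ removes the normal component, so the boundary term equals $\int_{\partial\Omega}\f{b}(\cdot,\boldsymbol{\nu}\times\f{E}_0)\cdot\boldsymbol{\nu}\times\delta\f{E}_h^0\,dS$.

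\emph{Step 3: insert into \eqref{eq:var-initial}.} With $\f{v}_h=\delta\f{E}_h^0$ we obtain
\begin{align*}
\int_\Omega\varepsilon(0)\delta\f{E}_h^0\cdot\delta\f{E}_h^0\,dx ={}& \int_\Omega\bigl(\f{f}_h^0+\curl\,\f{H}_0-\f{J}(\cdot,\f{E}_h^0,\f{H}_h^0,0)\bigr)\cdot\delta\f{E}_h^0\,dx\\
&-\int_{\partial\Omega}\bigl(\f{b}(\cdot,\boldsymbol{\nu}\times\f{E}_h^0)-\f{b}(\cdot,\boldsymbol{\nu}\times\f{E}_0)\bigr)\cdot\boldsymbol{\nu}\times\delta\f{E}_h^0\,dS .
\end{align*}
The volume term is bounded by $C\norm{\delta\f{E}_h^0}_{\f{L}^2(\Omega)}$, because \ref{ass:A4}, \eqref{eq:qhest2} and \eqref{eq:initialvalues} keep $\f{J}(\cdot,\f{E}_h^0,\f{H}_h^0,0)$ and $\f{f}_h^0$ bounded in $\f{L}^2(\Omega)$.

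\emph{Step 4: the boundary term (main obstacle).} Here $\boldsymbol{\nu}\times\delta\f{E}_h^0$ is not controlled by the $\f{L}^2(\Omega)$-norm uniformly in $h$. By the Lipschitz continuity at $\boldsymbol{\nu}\times\f{E}_0$ in \ref{ass:A6}, the boundary term is at most
\[
C\norm{\boldsymbol{\nu}\times(\Pi_h\f{E}_0-\f{E}_0)}_{\f{L}^2(\partial\Omega)}\norm{\boldsymbol{\nu}\times\delta\f{E}_h^0}_{\f{L}^2(\partial\Omega)} .
\]
Lemma \ref{lemma:V} with $s=1$, which requires $\f{E}_0,\curl\,\f{E}_0\in\f{H}^1(\Omega)$ from \ref{ass:A6}, bounds the first factor by $Ch^{1/2}$.

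For the second factor we use a discrete trace inverse estimate on the quasi-uniform mesh. Applied elementwise, a scaled trace inequality gives
\[
\norm{\boldsymbol{\nu}\times\f{v}_h}_{\f{L}^2(\partial\Omega)}\le Ch^{-1/2}\norm{\f{v}_h}_{\f{L}^2(\Omega)}\quad\forall\f{v}_h\in\textbf{ND}_h .
\]
The factors $h^{1/2}$ and $h^{-1/2}$ cancel, so the boundary term is also bounded by $C\norm{\delta\f{E}_h^0}_{\f{L}^2(\Omega)}$. If the trace inverse estimate is not considered available, I would prove it via affine equivalence of Nédélec elements plus a standard scaling argument.

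\emph{Step 5: conclude.} Combining Steps 3 and 4 with \eqref{eq:uniformposdef} gives $\underline{\varepsilon}\norm{\delta\f{E}_h^0}^2_{\f{L}^2(\Omega)}\le C\norm{\delta\f{E}_h^0}_{\f{L}^2(\Omega)}$ with $C$ independent of $h$, for $h<1$. This is the asserted boundedness.
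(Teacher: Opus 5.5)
Your proposal follows the paper's proof step for step. You test \eqref{eq:var-initial} with $\delta\f{E}_h^0$, replace $\f{H}_h^0$ by $\f{H}_0$ because $\curl\,\textbf{ND}_h\subset\textbf{DG}_h$, and integrate by parts with \eqref{eq:partial} and \ref{ass:A3}, so that only the difference $\f{b}(\cdot,\boldsymbol{\nu}\times\f{E}_h^0)-\f{b}(\cdot,\boldsymbol{\nu}\times\f{E}_0)$ survives on the boundary. You then balance the $h^{1/2}$ from Lemma \ref{lemma:V} (with $s=1$, via \ref{ass:A6}) against the $h^{-1/2}$ of the trace inverse estimate.

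What is missing is the coarse-mesh regime. The lemma claims boundedness for all $h>0$, but your Step 5 only covers $h<1$. Strictly speaking it covers only those $h$ for which $\boldsymbol{\nu}\times\Pi_h\f{E}_0$ lies in the neighbourhood of $\boldsymbol{\nu}\times\f{E}_0$ where the Lipschitz bound of \ref{ass:A6} holds, since that assumption is only a condition at one point. The paper fixes an $h_0$ for this via \eqref{eq:initialvalues} and treats $h>h_0$ separately. There it bounds $\f{b}(\cdot,\boldsymbol{\nu}\times\f{E}_h^0)$ in $\f{L}^2(\partial\Omega)$ by the growth condition of \ref{ass:A5} together with $\norm{\f{E}_h^0}_{\f{V}}\le\norm{\f{E}_0}_{\f{V}}$, and uses $h^{-1}\le h_0^{-1}$ and $h^{-1/2}\le h_0^{-1/2}$ in the inverse estimates.

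In your formulation the fix is short, because your curl term is already harmless for every $h$. For $h>h_0$, bound the boundary difference by a constant via \ref{ass:A5} and use $\norm{\boldsymbol{\nu}\times\delta\f{E}_h^0}_{\f{L}^2(\partial\Omega)}\le Ch_0^{-1/2}\norm{\delta\f{E}_h^0}_{\f{L}^2(\Omega)}$. Likewise, for the volume term cite \eqref{eq:Pi_h proj} rather than \eqref{eq:initialvalues}, since a convergence statement as $h\to0$ only gives boundedness for small $h$.
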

\begin{proof} 
    Setting $\f{v}_h = \delta \f{E}_h^0$ in \eqref{eq:var-initial} yields
    \begin{align}\notag
        \underline{\varepsilon}\norm{\delta \f{E}_h^0}_{\f{L}^2(\Omega)}^2 & \leq \norm{\f{f}_h^0 - \f{J}(\cdot,\f{E}^0_h,\f{H}_h^0,0)}_{\f{L}^2(\Omega)}\norm{\delta \f{E}_h^0}_{\f{L}^2(\Omega)}\\ \label{keylemma1}
        & + (\f{H}_h^0,\curl\,\delta\f{E}_h^0)_{\f{L}^2(\Omega)} - (\f{b}(\cdot, \boldsymbol{\nu}\times \f{E}_h^0),\boldsymbol{\nu}\times \delta \f{E}_h^0)_{\f{L}^2(\partial \Omega)} \quad \forall h >0. 
    \end{align}
    As $\curl\,\textbf{ND}_h\subset \textbf{DG}_h$, \eqref{eq:qhest}  gives
    \begin{align}
         (\f{H}_h^0,\curl\, \delta \f{E}_h^0)_{\f{L}^2(\Omega)} \! =\!(\f{H}_0,\curl\, \delta \f{E}_h^0)_{\f{L}^2(\Omega)}  \label{keylemma2} \!= \!(\curl\,\f{H}_0,\delta \f{E}_h^0)_{\f{L}^2(\Omega)}\! + \!(\f{b}(\cdot, \boldsymbol{\nu}\times \f{E}_0),\boldsymbol{\nu}\times \delta \f{E}_h^0)_{\f{L}^2(\partial \Omega)},
    \end{align}
    where we have also used \eqref{eq:partial} together with \hyperref[ass:A3]{(A3)}. By \hyperref[ass:A6]{(A6)}, there exist constants $L_0,C_0 > 0$   such that
    $$\norm{  \f{b}(\cdot, \boldsymbol{\nu}\times \f{E}_0) - \f{b}(\cdot,\f{u})}_{\f{L}^2(\partial \Omega)} \leq L_0\norm{ \boldsymbol{\nu}\times \f{E}_0- \f{u} }_{\f{L}^2(\partial \Omega)}$$
  holds  for all $\f{u}\in \f{L}^2(\partial \Omega)$ satisfying $\norm{ \boldsymbol{\nu}\times \f{E}_0 - \f{u} }_{\f{L}^2(\partial \Omega)}<C_0.$ Consequently, in view of  \eqref{eq:initialvalues}, we find a constant  $h_0 > 0$  such that
    \begin{align}\notag
        (\f{b}(\cdot, \boldsymbol{\nu}\times \f{E}_0) - \f{b}(\cdot, \boldsymbol{\nu}\times \f{E}_h^0),\boldsymbol{\nu}\times \delta \f{E}_h^0)_{\f{L}^2(\partial \Omega)} \notag
        \leq L_0\norm{ \boldsymbol{\nu}\times \f{E}_0-  \boldsymbol{\nu}\times \f{E}^0_h }_{\f{L}^2(\partial \Omega)} \norm{\boldsymbol{\nu}\times \delta \f{E}_h^0}_{\f{L}^2(\partial \Omega)} \\
       \leq L_0\norm{ \f{E}_0 - \f{E}_h^0 }_{\f{V}} \norm{  \delta \f{E}_h^0}_{\f{L}^2(\partial \Omega)}
        \label{keylemma3}\leq L_0\widetilde Ch^{\frac{1}{2}}\norm{\f{E}_0}_{\f{H}^1(\curl)}\norm{\delta \f{E}_h^0}_{\f{L}^2(\partial \Omega)} \quad \forall  0<h\leq h_0,
    \end{align}
    where we have used Lemma \ref{lemma:V} in combination with \hyperref[ass:A6]{(A6)}.   Applying \eqref{keylemma2} and \eqref{keylemma3} to \eqref{keylemma1} yields
      \begin{align}\notag
        \underline{\varepsilon}\norm{\delta \f{E}_h^0}_{\f{L}^2(\Omega)}^2  \leq &\norm{\f{f}_h^0 -\f{J}(\cdot,\f{E}^0_h,\f{H}_h^0,0)}_{\f{L}^2(\Omega)}\norm{\delta \f{E}_h^0}_{\f{L}^2(\Omega)} +  \norm{\curl\,\f{H}_0}_{\f{L}^2(\Omega)}\norm{\delta \f{E}_h^0}_{\f{L}^2(\Omega)}\\ \label{keylemma4}
        &  +L_0 \widetilde Ch^{\frac{1}{2}}\norm{\f{E}_0}_{\f{H}^1(\curl)}\norm{\delta \f{E}_h^0}_{\f{L}^2(\partial \Omega)} \quad \forall 0<h\leq h_0.
    \end{align} 
    In view of \eqref{eq:qhest2}, \eqref{eq:Pi_h proj},  and \hyperref[ass:A4]{(A4)}, we have
\begin{equation}\label{keylemma5}
\begin{split}
 \widehat C & := \sup_{h>0}\,   \norm{\f{f}_h^0 -\f{J}(\cdot,\f{E}^0_h,\f{H}_h^0,0)}_{\f{L}^2(\Omega)} +  \norm{\curl\,\f{H}_0}_{\f{L}^2(\Omega)}\\
 & = \sup_{h>0}\,   \norm{ \f{Q}_h\f{f}(0) - \f{J}(\cdot,\Pi_h\f{E}_0, \f{Q}_h\f{H}_0,0)}_{\f{L}^2(\Omega)} + \norm{\curl\,\f{H}_0}_{\f{L}^2(\Omega)} < \infty.
 \end{split}
 \end{equation}
    Furthermore, by the quasi-uniformity of $\{\mathcal{T}_h\}_{h>0}$,  the   trace inverse estimate (cf. \cite[Lemma 12.1]{Guermond})
    \begin{align} \label{eq:inverse-trace}
        \norm{\f{v}_h}_{\f{L}^2(\partial \Omega)}  \le \frac{C^\Gamma_\text{inv}}{\sqrt{h}} \norm{\f{v}_h}_{\f{L}^2(\Omega)} \quad  \forall \f{v}_h\in \textbf{ND}_h
    \end{align}
    holds with a constant $C^\Gamma_\text{inv}$, independent of $h$. Note that, similar to \eqref{eq:inverse}, the constant  $C^\Gamma_\text{inv}$  may depend on the quasi-uniformity constants of the triangulation. Now, applying   \eqref{keylemma5} and  \eqref{eq:inverse-trace} to \eqref{keylemma4} results in  
\begin{align*}
    \norm{\delta \f{E}_h^0}_{\f{L}^2(\Omega)} & \leq \underline{\varepsilon}^{-1}(\widehat C +  L_0 \widetilde CC^\Gamma_\text{inv} \norm{\f{E}_0}_{\f{H}^1(\curl)})\quad \forall 0<h\leq h_0.
\end{align*}
Finally, to show the boundedness for $h>h_0$, we apply  \eqref{eq:propertyb} and \eqref{eq:Pi_h proj} to obtain
$$\norm{\f{b}(\cdot,\boldsymbol{\nu}\times \f{E}_h^0)}_{\f{L}^2(\partial \Omega)}\leq \overline{b}(\norm{\boldsymbol{\nu}\times \f{E}_h^0}_{\f{L}^2(\partial \Omega)}  +  \lvert \partial \Omega\rvert^{\frac{1}{2}}) \leq \overline{b}(\norm{\f{E}_0}_\f{V}+  \lvert \partial \Omega\rvert^{\frac{1}{2}})  =:C_b \quad \forall h >0.$$
Hence, using \eqref{keylemma1} and \eqref{keylemma5}, we arrive at
\begin{align*}
    \underline{\varepsilon}\norm{\delta \f{E}_h^0}_{\f{L}^2(\Omega)}^2 & \leq \widehat C \norm{\delta \f{E}_h^0}_{\f{L}^2(\Omega)}
 + \norm{\f{H}_h^0}_{\f{L}^2(\Omega)}\norm{\curl\,\delta \f{E}_h^0}_{\f{L}^2(\Omega)} + \norm{\f{b}(\cdot,\boldsymbol{\nu}\times \f{E}_h^0)}_{\f{L}^2(\partial \Omega)}\norm{\boldsymbol{\nu}\times \delta \f{E}_h^0}_{\f{L}^2(\partial \Omega)}\\
 & \leq \Bigl(\widehat{C}+\frac{C_\text{inv}}{h_0}\norm{\f{H}_0}_{\f{L}^2(\Omega)} + \frac{C_\text{inv}^\Gamma}{\sqrt{h_0}}C_b\Bigr)\norm{\delta \f{E}_h^0}_{\f{L}^2(\Omega)} \quad \forall h > h_0,
    \end{align*}
    where we have used \eqref{eq:inverse},\eqref{eq:qhest2} and \eqref{eq:inverse-trace} for the last inequality.
This completes the proof. 
\end{proof}
\begin{proposition}\label{theorem:stability} \normalfont Let Assumption \ref{ass} hold. Then,
there exist positive constants $C,\beta>0$, not depending on  $h$ and $N$, such that for all $h>0$ and $N> \max\{\beta,M(h)\}$ with $M(h)$ as in \eqref{eq:M(h)}, the solution {\normalfont $\{(\f{E}^n_h,\f{H}^n_h)\}_{n=1}^N$} to {\normalfont (P$_{N,h}$)} satisfies
\begin{align*}
{\normalfont \max_{1\leq m\leq N}\Bigl[ }&{\normalfont
\norm{\f{E}^m_h}_{\f{V}}^2
+\norm{\f{H}^m_h}_{\f{L}^2(\Omega)}^2
+\norm{\delta\f{E}^m_h}_{\f{L}^2(\Omega)}^2
+\norm{\delta\f{H}^m_h}_{\f{L}^2(\Omega)}^2
\Bigr]\leq C.}
\end{align*}
\end{proposition}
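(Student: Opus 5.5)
The plan has two stages. First I derive a uniform bound on the discrete time derivatives $(\delta\f{E}^n_h,\delta\f{H}^n_h)$ by differencing the scheme in time. Then I obtain the bounds on $\f{E}^m_h$, $\f{H}^m_h$ and $\curl\,\f{E}^m_h$ from the scheme itself. The boundary trace is handled separately through the coercivity of $\f{b}$.

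\emph{Step 1: differenced scheme.} Since $(\delta\f{E}_h^0,\delta\f{H}_h^0)$ was constructed so that \eqref{time-space-discrete} also holds at $n=0$, I subtract the equations at steps $n$ and $n-1$ for $n\ge 1$. I test the first difference with $\f{v}_h=\delta\f{E}^n_h$ and use the second in the form \eqref{eq:discrete-equation}, multiplied by $\delta\f{H}^n_h$.

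\emph{Step 2: the terms of the differenced equations.}
\begin{itemize}
\item The $\curl$ terms $-(\f{H}^n_h-\f{H}^{n-1}_h,\curl\,\delta\f{E}^n_h)$ and $(\curl(\f{E}^n_h-\f{E}^{n-1}_h),\delta\f{H}^n_h)$ cancel exactly.
\item The boundary term $(\f{b}(\boldsymbol{\nu}\times\f{E}^n_h)-\f{b}(\boldsymbol{\nu}\times\f{E}^{n-1}_h),\boldsymbol{\nu}\times\delta\f{E}^n_h)_{\f{L}^2(\partial\Omega)}$ is nonnegative by monotonicity of $\f{b}$, so it is dropped.
\item The time-derivative terms become $\varepsilon(t_n)\delta\f{E}^n_h-\varepsilon(t_{n-1})\delta\f{E}^{n-1}_h=\varepsilon(t_n)(\delta\f{E}^n_h-\delta\f{E}^{n-1}_h)+(\varepsilon(t_n)-\varepsilon(t_{n-1}))\delta\f{E}^{n-1}_h$.
\item The first part gives, via $2a(a-b)\ge a^2-b^2$ in the weighted norm, $\tfrac12(\|\delta\f{E}^n_h\|^2_{\varepsilon(t_n)}-\|\delta\f{E}^{n-1}_h\|^2_{\varepsilon(t_n)})$.
\item Switching the weight from $\varepsilon(t_n)$ to $\varepsilon(t_{n-1})$ and the commutator part each cost $C\tau\|\delta\f{E}^{n-1}_h\|^2$, using $\varepsilon\in W^{1,\infty}$. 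The analogous treatment applies to $\mu^n_h$ and $\mu^n_{t,h}$, using $\mu\in W^{2,\infty}$.
\item By (A4), the $\f{J}$ difference is bounded by $L(\|\f{E}^n_h-\f{E}^{n-1}_h\|+\|\f{H}^n_h-\f{H}^{n-1}_h\|+\tau(1+|\f{E}^n_h|+\dots))$ in $\f{L}^2$. This gives $C\tau(\|\delta\f{E}^n_h\|^2+\|\delta\f{H}^n_h\|^2+\|\f{E}^n_h\|^2+\|\f{H}^n_h\|^2+\|\f{E}^{n-1}_h\|^2+\|\f{H}^{n-1}_h\|^2+1)$.
\item The source difference is $\f{Q}_h(\f{f}(t_n)-\f{f}(t_{n-1}))$. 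It is controlled by $\tau\|\delta \f f^n\|^2$, whose sum over $n$ is bounded by $\|\f{f}\|_{H^1(0,T;\f{L}^2)}^2$.
\end{itemize}

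\emph{Step 3: closing the time-derivative estimate.} The lower-order quantities satisfy $\f{E}^n_h=\f{E}^0_h+\tau\sum_{k\le n}\delta\f{E}^k_h$, and similarly for $\f{H}^n_h$. Hence $\|\f{E}^n_h\|^2\le 2\|\f{E}^0_h\|^2+2T\tau\sum_{k\le n}\|\delta\f{E}^k_h\|^2$, which is bounded by \eqref{eq:initialvalues}. Summing Step 2 over $n=1,\dots,m$ then yields
\[
X_m \le C_0 + C\tau\sum_{k=0}^{m}X_k,\qquad X_k:=\|\delta\f{E}^k_h\|^2_{\f{L}^2(\Omega)}+\|\delta\f{H}^k_h\|^2_{\f{L}^2(\Omega)}.
\]
The constant $C_0$ contains $X_0$, which is bounded uniformly in $h$:
\begin{itemize}
\item $\|\delta\f{E}^0_h\|$ is bounded by Lemma~\ref{lemma:initial-bounded};
\item $\|\delta\f{H}^0_h\|\le C(\|\f{H}_0\|+\|\curl\,\Pi_h\f{E}_0\|)$ by \eqref{eq:delh0}, which is bounded by \eqref{eq:Pi_h proj}.
\end{itemize}
The implicit term $C\tau X_m$ is absorbed into the left side once $\tau C<1/2$. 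This is where $N>\beta$ enters, while $N>M(h)$ ensures existence via Proposition~\ref{theorem:existence}. A discrete Gronwall lemma then gives $\max_m X_m\le C$, and with it the bounds on $\|\f{E}^m_h\|_{\f{L}^2}$ and $\|\f{H}^m_h\|_{\f{L}^2}$.

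\emph{Step 4: curl and boundary trace.} From \eqref{eq:discrete-equation}, $\curl\,\f{E}^m_h=-\mu^m_h\delta\f{H}^m_h-\mu^m_{t,h}\f{H}^m_h$, so $\|\curl\,\f{E}^m_h\|$ is bounded. For the trace, I test the first equation of the scheme with $\f{v}_h=\f{E}^m_h$. All volume terms are then bounded by the previous steps; in particular $(\f{H}^m_h,\curl\,\f{E}^m_h)$ is bounded. This gives $\int_{\partial\Omega}\f{b}(\boldsymbol{\nu}\times\f{E}^m_h)\cdot\boldsymbol{\nu}\times\f{E}^m_h\,dS\le C$. By \eqref{eq:propertyb}, $\f{b}(\xi)\cdot\xi\ge\underline b|\xi|^2-\underline{b}K^2-\overline b(K+1)K$, splitting into $|\xi|\ge K$ and $|\xi|<K$. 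Therefore $\|\boldsymbol{\nu}\times\f{E}^m_h\|^2_{\f{L}^2(\partial\Omega)}\le C$, which completes the $\f{V}$-bound.

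The main obstacle is Step 2 combined with Step 3, namely the bookkeeping of the time-dependent weights. In the $\f{H}$ equation, differencing $\mu^n_h\delta\f{H}^n_h+\mu^n_{t,h}\f{H}^n_h$ produces $(\mu^n_{t,h}-\mu^{n-1}_{t,h})\f{H}^{n-1}_h$, and this requires $\partial_t\mu$ to be Lipschitz in time. That is exactly why \ref{ass:A1} assumes $W^{2,\infty}$. The initial step relies on the artificial $n=0$ equation together with Lemma~\ref{lemma:initial-bounded}; this is where \ref{ass:A6} is essential.
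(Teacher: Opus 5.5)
Your proposal is correct and follows essentially the same route as the paper's proof in Appendix~\ref{appen c}. Both difference the scheme in time using the artificial $n=0$ step, and both test with $(\delta\f{E}^n_h,\delta\f{H}^n_h)$ so that the curl terms cancel and the boundary term drops by monotonicity. The paper packages your telescoping-with-weight-switching into Lemma~\ref{lemma:a}. Lower-order terms are controlled via $\f{E}^n_h=\f{E}^0_h+\tau\sum_{k\le n}\delta\f{E}^k_h$, the $k=m$ term is absorbed for $N>\beta$, and a discrete Gronwall argument closes the estimate. Finally, the curl bound from \eqref{eq:discrete-equation} and the trace bound from testing with $\f{E}^m_h$ and using \eqref{eq:propertyb} are the paper's concluding steps as well.
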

The proof of Proposition \ref{theorem:stability} is based on suitably testing the difference of the first two variational equations in \eqref{time-space-discrete} at consecutive time steps along with the application of Lemma \ref{lemma:initial-bounded} and \eqref{eq:delh0}-\eqref{eq:var-initial}. Since no substantially new ideas are required, we omit the proof here and refer to    Appendix \ref{appen c}.

\subsection{Convergence} \label{section: convergence}
In the following, let $N=N(h)$ denote a natural number depending on $h>0$ such that $N(h) \to \infty$ as $h \to 0$ and  $\max\{\beta,M(h)\} < N(h)$ for all $h>0$ with $M(h)$ and $\beta$ as in Proposition \ref{theorem:existence} and Proposition \ref{theorem:stability}, respectively.  Since $N$ is uniquely determined by $h$, we omit $N$ as an index in all relevant quantities. We now introduce piecewise linear and piecewise constant (in time) interpolants of our physical parameters. For example, for the electric field, the magnetic permeability, and the distributed nonlinearity, these read as 
\begin{equation}\label{eq:interpol} 
    \begin{aligned}
        & \f{E}_{h}:[0,T]\to \f{L}^2(\Omega),\quad && t\mapsto 
    \begin{cases}
     \f{E}^0_h, & t = 0\\
     \f{E}^{n-1}_h + (t-t_{n-1})\delta \f{E}^n_h, & t\in (t_{n-1},t_n]
     \end{cases}  \\  
     & \overline{\f{E}}_{h}:[0,T]\to \f{L}^2(\Omega),\quad &&t\mapsto 
    \begin{cases}
     \f{E}^0_h, & t = 0\\
     \f{E}^n_h, & t\in (t_{n-1},t_n]
     \end{cases}\\
     & \mu_{h}:[0,T]\to \f{L}^2(\Omega),\quad && t\mapsto 
    \begin{cases}
     \mu(0), & t = 0\\
     \mu(t_{n-1}) + (t-t_{n-1})\frac{\mu(t_n)-\mu(t_{n-1})}{\tau}, & t\in (t_{n-1},t_n]
     \end{cases}\\
     & \overline{\mu}_{h}:[0,T]\to \f{L}^2(\Omega),\quad &&t\mapsto 
    \begin{cases}
     \mu(0), & t = 0\\
     \mu(t_n), & t\in (t_{n-1},t_n]
     \end{cases}\\
       & \overline{\f{J}}_{h}:[0,T]\to \f{L}^2(\Omega),\quad &&t\mapsto 
    \begin{cases}
      \f{J}\left(\cdot,\f{E}^0_h, \f{H}_h^0, 0\right), & t = 0\\
      \f{J}\left(\cdot,\f{E}^n_h, \f{H}_h^n, t_n\right), & t\in (t_{n-1},t_n],
     \end{cases}
     \end{aligned}
\end{equation}
where quantities without (resp. with) an overline correspond to  piecewise linear (resp. piecewise constant) interpolations. By this construction and  Proposition \ref{theorem:stability}, it holds that 
$$(\f{E}_{h},\f{H}_{h})\in W^{1,\infty}(0,T;\f V \times \f{L}^2(\Omega))\quad \text{with}\quad  \partial_t(\f{E}_{h},\f{H}_{h})(t) = (\delta\f{E}^n_h,\delta\f{H}^n_h)\quad \forall t\in (t_{n-1},t_n]$$ 
and
\begin{equation}\label{eq:TC/N}
    \begin{split}
    \norm{\f{E}_{h}(t)-\overline{\f{E}}_{h}(t)}_{\f{L}^2(\Omega)}& \leq \tau \max_{1\leq n\leq N} \norm{\delta \f{E}^n_h}_{\f{L}^2(\Omega)}\leq \frac{TC}{N}\quad \forall t\in [0,T]\\
    \norm{\f{H}_{h}(t)-\overline{\f{H}}_{h}(t)}_{\f{L}^2(\Omega)}&\leq \tau \max_{1\leq n\leq N} \norm{\delta \f{H}^n_h}_{\f{L}^2(\Omega)}\leq \frac{TC}{N}\quad \forall t\in [0,T],
    \end{split}
\end{equation}
from which it follows that
    \begin{equation}\label{eq:E_N overline E_N} 
        \lim_{h\to 0} \left( \sup_{t\in [0,T]}\norm{\f{E}_{h}(t) - \overline{\f{E}}_{h}(t)}_{\f{L}^2(\Omega)} \right) = \lim_{h\to 0} \left(\sup_{t\in [0,T]}\norm{\f{H}_h(t) - \overline{\f{H}}_h(t)}_{\f{L}^2(\Omega)}\right) = 0.
    \end{equation}
Analogously,  in view of  \ref{ass:A1}, it holds that  $\varepsilon_h,\mu_h\in W^{1,\infty}(0,T;L^\infty(\Omega)^{3\times 3})$,  and for all $h>0$, we have
\begin{equation}\label{eq:coefficientest}
\begin{split}
\max\left\{
\norm{\overline{\varepsilon}_h - \varepsilon}_{L^\infty(0,T;L^\infty(\Omega)^{3\times 3})}
\norm{\overline{\varepsilon}_h - \varepsilon_h}_{L^\infty(0,T;L^\infty(\Omega)^{3\times 3})}
\right\}
&\leq \frac{T\norm{\partial_t\varepsilon}_{L^\infty(0,T;L^\infty(\Omega)^{3\times 3})}}{N(h)}\\
\max\left\{
\norm{\overline{\mu}_h - \mu}_{L^\infty(0,T;L^\infty(\Omega)^{3\times 3})},
\norm{\overline{\mu}_h - \mu_h}_{L^\infty(0,T;L^\infty(\Omega)^{3\times 3})}
\right\}
&\leq \frac{T\norm{\partial_t\mu}_{L^\infty(0,T;L^\infty(\Omega)^{3\times 3})}}{N(h)}\\
\norm{\overline{\mu}_{t,h} - \partial_t\mu}_{L^\infty(0,T;L^\infty(\Omega)^{3\times 3})}
&\leq \frac{T\norm{\partial_{tt}\mu}_{L^\infty(0,T;L^\infty(\Omega)^{3\times 3})}}{N(h)}.
\end{split}
\end{equation}
   Also, by \hyperref[ass:A2]{(A2)}, \eqref{eq:qhest} and \eqref{eq:qhest2},  
        \begin{equation}\label{eq:fconv}
        \overline{\f{f}}_h \quad \to\quad  \f{f}\quad \text{in } L^{2}(0,T;\f{L}^2(\Omega))\text{ as } h\to 0.
    \end{equation}
Now, utilizing \eqref{eq:E_N overline E_N}, Proposition \ref{theorem:stability}, and  Assumption \ref{ass}, standard arguments yield the existence of subsequences, which we denote  w.l.o.g. by the same symbol, such that as $h \to 0$,
\begin{equation}\label{eq:convergences}
    \begin{aligned}
          (\f{E}_{h},\f{H}_{h}) \quad & \rightharpoonup\quad &&(\f{E},\f{H}) &&\quad \text{weakly-}^*\text{ in }  L^\infty(0,T;\f{V} \times   \f L^2(\Omega))\\
          \partial_t(\f{E}_{h},\f{H}_{h}) \quad & \rightharpoonup\quad &&\partial_t (\f{E},\f{H}) &&\quad \text{weakly-}^*\text{ in } L^\infty(0,T;\f{L}^2(\Omega) \times  \f{L}^2(\Omega))\\
              (\overline{\f{E}}_{h},\overline{\f{H}}_{h})\quad & \rightharpoonup \quad && ( {\f{E}}, {\f{H}}) && \quad \text{weakly-}^*\text{ in }  L^\infty(0,T;\f{V} \times \f L^2(\Omega))\\
                     (\f{E}_h,\f{H}_h)(t) \quad &\rightharpoonup  \quad && (\f{E},\f{H})(t)  &&\quad \text{weakly}  \, \, \, \, \text{ in }  \f{L}^2(\Omega) \times \f L^2(\Omega)   \text{ for all } t\in [0,T] \\    
       \overline{\f{J}}_h \quad & \rightharpoonup\quad && \overline{\f{J}}&&\quad  \text{weakly-}^*\text{ in } L^\infty(0,T;\f{L}^2(\Omega))\\ 
          \f{b}(\cdot, \boldsymbol{\nu}\times \overline{\f{E}}_h)\quad & \rightharpoonup\quad && \overline{\f{b}}&&\quad \text{weakly-}^*\text{ in } L^\infty(0,T;\f{L}^2(\partial\Omega))\\
    \end{aligned}
    \end{equation}
    for  some  $(\f{E},\f{H})  \in      W^{1,\infty}(0,T;  \f{L}^2(\Omega)  \! \times \!   \f{L}^2(\Omega)) \cap  L^\infty(0,T; \f V  \times  \f{L}^2(\Omega))$ satisfying $(\f E, \f H)(0)= (\f E_0, \f H_0)$ and some $\overline{\f{J}} \in  L^\infty(0,T;\f{L}^2(\Omega)), \overline{\f{b}} \in  L^\infty(0,T;\f{L}^2(\partial\Omega))$.

 Our goal  is to prove the uniform convergence of the subsequence $(\f{E}_{h},\f{H}_{h})$ towards a solution of  \eqref{eq:maxwell intro full}.    Thanks to the uniqueness of solutions to  \eqref{eq:maxwell intro full}  (see Appendix \ref{appen a}), standard arguments  show that the whole sequence converges uniformly. According to    
 \eqref{time-space-discrete} and based on our construction  of  the interpolants (cf. \eqref{eq:interpol}),  it holds for all $h>0$ and all $t \in (0,T]$:
{\small \begin{equation}\tag{$\textnormal{P}_{h}$}\label{eq:P-hat} \left\{
\begin{aligned}
&\int_\Omega \!\!\overline{\varepsilon}_{h}(t)  \partial_t \f{E}_{h}(t) \! \cdot \!  \f{v}_h\,dx = \!\! \int_\Omega \!\!\overline{\f{f}}_{h}(t) \!  \cdot \!  \f{v}_h \! -\! \overline{\f{J}}_{h}(t) \! \cdot \!  \f{v}_h + \overline{\f{H}}_{h}(t) \!  \cdot \!  \curl\,\f{v}_h  \, dx \!
\\ &  \phantom{\int_\Omega \overline{\varepsilon}_{h}(t)  \partial_t \f{E}_{h}(t) \! \cdot \!  \f{v}_h\,dx} - \!\!\int_{\partial \Omega} \!\!  \!\!   \f{b}(\cdot , \!\boldsymbol{\nu} \!\times \!\overline{\f{E}}_{h}(t)) \! \cdot  \! \boldsymbol{\nu} \! \times \! \f{v}_h\,dS \quad \forall \f{v}_h\in \textbf{ND}_h,\\
&\int_\Omega (\overline{\mu}_h \partial_t \f{H}_{h}(t) + \curl\,\overline{\f{E}}_{h}(t) + \overline{\mu}_{t,h}(t)\overline{\f{H}}_h)\cdot \f{w}_h\,dx  = 0\quad \forall \f{w}\in \textbf{DG}_h. 
\end{aligned}
\right.
\end{equation}}
For later use, we introduce, for every $h>0$ and $t\in(0,T]$, the operator $\Phi_h(t)\in \f{V}^*$ by
\begin{equation}\label{eq:Phi_h-def}
    \Phi_h(t)\f{v}\coloneqq \int_\Omega \overline{\f{f}}_{h}(t)   \cdot   \f{v}  - \overline{\f{J}}_{h}(t)  \cdot   \f{v} + \overline{\f{H}}_{h}(t)   \cdot   \curl\,\f{v}  \, dx 
- \int_{\partial \Omega}    \f{b}(\cdot , \boldsymbol{\nu} \times \overline{\f{E}}_{h}(t))  \cdot   \boldsymbol{\nu}  \times  \f{v}\,dS.
\end{equation}
 By definition, we have
\begin{equation}\label{eq:Phi_h-eq}
    \Phi_h(t)\f{v}_h = \int_\Omega \overline{\varepsilon}_{h}(t)  \partial_t \f{E}_{h}(t)  \cdot   \f{v}_h\,dx\quad \forall \f{v}_h \in \textbf{ND}_h.
\end{equation}
Taking into account Proposition \ref{theorem:stability}, \eqref{eq:fconv}, \ref{ass:A2} and \hyperref[ass:A4]{(A4)}-\hyperref[ass:A6]{(A6)},   we also obtain \begin{equation}\label{eq:infty}
    \sup_{h>0,\, t \in (0,T]}\,\norm{\Phi_h (t)}_{\f{V}^*}< \infty.
\end{equation}  
In the following, we shortly address the difficulties in proving the uniform convergence and how we overcome them. The standard approach in our setting would be to first pass to the limit in \eqref{eq:P-hat} to show that $(\f{E},\f{H})$ (see \eqref{eq:convergences}) is a solution to \eqref{eq:maxwell intro full}. Afterwards, one subtracts \eqref{eq:P-hat} and the weak formulation of \eqref{eq:maxwell intro full} from each other and tests suitably, which yields a uniform bound between $(\f{E}_h,\f{H}_h)$ and $(\f{E},\f{H})$, thus establishing uniform convergence. In view of \eqref{eq:convergences}, the first step can only be carried out  provided that
\begin{equation} \label{explanation}
 \overline{\f{J}} = \f J(\cdot, \f E, \f H, \cdot). 
\end{equation}
 However, even if the mapping $\f{J} : \Omega \times \mathbb{R}^3 \times \mathbb{R}^3 \times [0,T] \to \mathbb{R}^3$ is assumed to be monotone with respect to its second and third arguments (cf. Remark \ref{rem:notmonotone}),   Minty's trick  cannot be applied to prove \eqref{explanation}. Furthermore, even  if $\f E^k_h \in  \textbf{ND}_h$ is assumed to be   discrete divergence-free for all $k=0,\ldots, N$,  the discrete compactness property   \cite{Kikuchi} yields only   the uniform convergence in $\f E_h$.  This alone  is not enough to conclude the limit  in   \eqref{explanation}. In conclusion, even under strengthened and unrealistic assumptions, monotonicity and discrete compactness arguments still fail. 

In light of the previous difficulties, one could instead try to consider the difference between $\eqref{eq:P-hat}$ and (P$_r$) for fixed $h,r>0$ in order to effectively employ the properties of $\f{J}$ and $\f{b}$. Since $\eqref{eq:P-hat}$ and (P$_r$) can only be tested with functions in $\textbf{ND}_h$ and $\textbf{ND}_r$, respectively, this strategy would result in error terms of the form
\begin{equation}\label{eq:errorcauchy}
    \overline{\f{E}}_r - \Pi_h \overline{\f{E}}_r,\quad \Pi_r \overline{\f{E}}_h - \overline{\f{E}}_h,
\end{equation}
(see also \eqref{eq:error}) which in general do not admit Cauchy-like behaviour,   as $\Pi_h \overline{\f{E}}_r$ and $\Pi_r \overline{\f{E}}_h$ are not expected to weakly converge when $h,r$ \emph{simultaneously} go to zero.

To cope with the aforementioned difficulties, we propose a new strategy based on a   modified Cauchy argument taking into account a differentiable   time-dependent Hilbert space as follows: For every arbitrarily fixed $h>0$, define the space 
$$\mathcal{H}_h(t) \coloneqq (\f{L}^2(\Omega)\times \f{L}^2(\Omega),(\cdot,\cdot)_{\mathcal{H}_h(t)}) \quad \forall t\in [0,T],$$
where the time-evolving inner product $(\cdot,\cdot)_{\mathcal{H}_h(t)}$ is given by
\begin{equation} \label{timedependentscalar}
((\f  e , \f h),(\f  u, \f v))_{\mathcal{H}_h(t) } :=  \int_{\Omega}  {\varepsilon}_h(t) \f e  \cdot \f u  \, dx +   \int_{\Omega}  {\mu}_h(t) \f h \cdot \f v  \, dx. 
\end{equation}
Notice that  the time-dependent inner product is defined using the piecewise linear interpolants $ \varepsilon_h,\mu_h \in W^{1,\infty}(0,T;L^\infty(\Omega)^{3\times 3})$, rather than the piecewise constant interpolants $\overline{\varepsilon}_h,\overline{\mu}_h \in L^{\infty}(0,T;L^\infty(\Omega)^{3\times 3})$. This  guarantees that the corresponding differentiability   is preserved in $\mathcal{H}_h(t)$, which is essential for our subsequent analysis.  Now, they key idea to deal with the error terms presented in \eqref{eq:errorcauchy} is  to first take the limit inferior as $r\to 0$ (for fixed $h$) and afterwards let $h\to 0$. To accomplish this, we employ the weakly lower   semicontinuity of the squared norm and the pointwise weak convergence in \eqref{eq:convergences}, which yields 
\begin{equation}\label{eq:liminf}
    \begin{aligned}
       \min\{\underline \varepsilon, \underline \mu\} \norm{(\f{E}_h - \f{E},\f{H}_h-\f{H})(t)}_{\f{L}^2(\Omega)^2}^2 \leq  \norm{(\f{E}_h - \f{E},\f{H}_h-\f{H})(t)}_{\mathcal{H}_h(t)}^2\\
        \leq  \liminf_{r\to 0} \norm{(\f{E}_h - \f{E}_r,\f{H}_h-\f{H}_r)(t)}_{\mathcal{H}_h(t)}^2 \quad \forall h >0, t \in [0,T].
    \end{aligned}
    \end{equation}
The main task is to prove that the limit inferior is arbitrarily small, uniformly on $[0,T]$, for all sufficiently small $h$, which is going to involve Cauchy-type arguments. Based on this strategy, the uniform convergence is established in the following theorem:

 \begin{theorem}\label{theorem:convergence}    \normalfont  Let Assumption \ref{ass} be satisfied. Then, it holds that
   \begin{equation}\label{eq:inequality}
  {\normalfont(\f{E}_h,\f{H}_h)\quad \to\quad (\f{E},\f{H})\quad \text{in } C([0,T],\f{L}^2(\Omega) \times \f{L}^2(\Omega)) \quad \text{as } h\to 0}\\
   \end{equation}
    with   $(\f{E},\f{H})  \in      W^{1,\infty}(0,T;  \f{L}^2(\Omega)  \! \times \!   \f{L}^2(\Omega)) \cap  L^\infty(0,T; \f V  \times  \f{L}^2(\Omega))$  being the  unique solution to {\normalfont \eqref{eq:maxwell intro full}}.
    \end{theorem}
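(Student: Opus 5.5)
We follow the strategy outlined before the statement. We estimate $\norm{(\f{E}_h-\f{E}_r,\f{H}_h-\f{H}_r)(t)}_{\mathcal{H}_h(t)}^2$ for two mesh parameters $h,r>0$, take $\liminf_{r\to0}$ for fixed $h$ using \eqref{eq:liminf}, and then let $h\to0$. Uniformity in $t$ will come from a Gronwall argument. Once uniform convergence of the subsequence is known, we pass to the limit in \eqref{eq:P-hat} to identify $(\f{E},\f{H})$ as a solution of \eqref{eq:maxwell intro full}. The uniqueness result of Appendix \ref{appen a} then gives convergence of the whole sequence.

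\textbf{Energy identity for the difference.} Set $\f{e}:=\f{E}_h-\f{E}_r$ and $\f{k}:=\f{H}_h-\f{H}_r$. Since $\varepsilon_h,\mu_h$ are Lipschitz in time, we can differentiate
\[
\tfrac{d}{dt}\norm{(\f{e},\f{k})}_{\mathcal{H}_h(t)}^2=2(\varepsilon_h\partial_t\f{e},\f{e})+2(\mu_h\partial_t\f{k},\f{k})+(\partial_t\varepsilon_h\f{e},\f{e})+(\partial_t\mu_h\f{k},\f{k}).
\]
The last two terms are bounded by $C\norm{(\f{e},\f{k})}^2$. In the first two we replace $\varepsilon_h,\mu_h$ by $\overline\varepsilon_h,\overline\mu_h$ and by $\overline\varepsilon_r,\overline\mu_r$; by \eqref{eq:coefficientest} and Proposition \ref{theorem:stability} this costs $O(1/N(h)+1/N(r))$. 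We also replace $\f{e},\f{k}$ by the piecewise-constant differences $\overline{\f{E}}_h-\overline{\f{E}}_r$ and $\overline{\f{H}}_h-\overline{\f{H}}_r$, at a cost given by \eqref{eq:E_N overline E_N}.

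The term $(\overline\varepsilon_h\partial_t\f{E}_h,\f{v})$ can only be expressed through $\Phi_h(t)\f{v}$ when $\f{v}\in\textbf{ND}_h$. We therefore split
\[
\overline{\f{E}}_h-\overline{\f{E}}_r=\bigl(\overline{\f{E}}_h-\Pi_h\overline{\f{E}}_r\bigr)+\bigl(\Pi_h\overline{\f{E}}_r-\overline{\f{E}}_r\bigr),
\]
and similarly from the $r$-side with $\Pi_r\overline{\f{E}}_h-\overline{\f{E}}_h$. Using \eqref{eq:Phi_h-eq} and \eqref{eq:Phi_h-def}, the main terms combine into $\Phi_h\f{v}-\Phi_r\f{v}$ with $\f{v}=\overline{\f{E}}_h-\overline{\f{E}}_r\in\f{V}$, together with the magnetic equations. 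The magnetic equations are tested with $\f{Q}_h$ and $\f{Q}_r$ projections, using $\curl\,\textbf{ND}_h\subset\textbf{DG}_h$.

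The contributions then sort as follows.
\begin{itemize}
\item The curl terms $(\overline{\f{H}}_h-\overline{\f{H}}_r,\curl\,\f{v})-(\curl(\overline{\f{E}}_h-\overline{\f{E}}_r),\overline{\f{H}}_h-\overline{\f{H}}_r)$ cancel via \eqref{eq:partial}, up to a boundary term.
\item That boundary term pairs with the $\f{b}$-terms and is nonpositive by monotonicity of $\f{b}$.
\item The $\f{J}$-terms are bounded by $L\norm{(\f{e},\f{k})}^2$ plus small terms using \hyperref[ass:A4]{(A4)}. This Lipschitz bound replaces any monotonicity requirement on $\f{J}$.
\item The $\f{f}$-terms are controlled through \eqref{eq:fconv}.
\end{itemize}
This yields
\[
\norm{(\f{e},\f{k})(t)}_{\mathcal{H}_h(t)}^2\le\norm{(\f{e},\f{k})(0)}^2+C\int_0^t\norm{(\f{e},\f{k})}^2\,ds+R_{h,r}(t).
\]

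\textbf{Main obstacle: the remainder terms.} The hardest step is controlling the pieces of $R_{h,r}$ that contain the projection errors from \eqref{eq:errorcauchy}. One such piece is
\[
\int_0^t\Bigl(\Phi_h(s)\bigl(\Pi_h\overline{\f{E}}_r-\overline{\f{E}}_r\bigr)-\overline\varepsilon_h\partial_t\f{E}_h\cdot\bigl(\Pi_h\overline{\f{E}}_r-\overline{\f{E}}_r\bigr)\Bigr)\,ds .
\]
For fixed $h$ we first take $\liminf_{r\to0}$. Since $\overline{\f{E}}_r\rightharpoonup\f{E}$ weakly-$^*$ in $L^\infty(0,T;\f{V})$, $\Pi_h$ is a fixed bounded linear map, and $\Phi_h$, $\partial_t\f{E}_h$ are fixed, these integrals converge to the same expression with $\f{E}$ in place of $\overline{\f{E}}_r$. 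The resulting bound is
\[
C\norm{\Pi_h\f{E}-\f{E}}_{L^1(0,T;\f{V})}\sup_{h,s}\bigl(\norm{\Phi_h(s)}_{\f{V}^*}+\norm{\partial_t\f{E}_h}\bigr),
\]
which tends to $0$ as $h\to0$ by Lemma \ref{lemma:V}, \eqref{eq:infty} and dominated convergence.

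The symmetric term $\Pi_r\overline{\f{E}}_h-\overline{\f{E}}_h$ has $h$ fixed, so $\norm{\Pi_r\overline{\f{E}}_h-\overline{\f{E}}_h}_{\f{V}}\to0$ as $r\to0$ by Lemma \ref{lemma:V}. Here we must be careful: $\overline{\f{E}}_h$ takes only finitely many values, so the convergence is uniform in time, and it is multiplied by the bounded quantity $\norm{\Phi_r}_{\f{V}^*}$.

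The quadratic terms also need care. We cannot pass $\liminf_r$ inside $\int\norm{(\f{e},\f{k})}^2$. Instead we apply Gronwall for fixed $h,r$ before taking the liminf, and arrange every non-Gronwall remainder to be linear in $\overline{\f{E}}_r$, so that weak limits suffice. The initial term tends to $0$ by \eqref{eq:initialvalues}.

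\textbf{Conclusion.} Combining with \eqref{eq:liminf} gives $\sup_t\norm{(\f{E}_h-\f{E},\f{H}_h-\f{H})(t)}\to0$. Uniform convergence then turns the weak limits $\overline{\f{J}}$ and $\overline{\f{b}}$ into $\f{J}(\cdot,\f{E},\f{H},\cdot)$ via \hyperref[ass:A4]{(A4)}. For the boundary term, $\f{b}(\cdot,\boldsymbol{\nu}\times\f{E})$ is recovered by monotonicity and a limsup argument from the same energy inequality. Passing to the limit in \eqref{eq:P-hat} with $\Pi_h\f{v}$ shows that the limit solves \eqref{eq:maxwell intro full}.
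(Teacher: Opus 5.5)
Your proposal is essentially the paper's proof. The steps match: the $\mathcal{H}_h(t)$-energy of $(\f{E}_h-\f{E}_r,\f{H}_h-\f{H}_r)$, the splitting through $\Pi_h,\Pi_r$ and $\f{Q}_h,\f{Q}_r$ to reach $\Phi_h-\Phi_r$, Gr\"onwall at fixed $h,r$, then $\liminf_{r\to0}$ followed by $h\to0$, identification of $\overline{\f{b}}$ by maximal monotonicity, and uniqueness to recover the whole sequence.

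Your one variant is harmless and slightly cleaner. You keep the $h$-side projection remainders signed, so they stay linear in $\overline{\f{E}}_r,\overline{\f{H}}_r$ through Gr\"onwall, and weak-$^*$ convergence in $L^\infty(0,T;\f{V})$ passes to the limit directly. The paper instead puts absolute values into $\Psi_{h,r}$. That tacitly needs $\overline{\f{E}}_r(s)\rightharpoonup\f{E}(s)$ weakly in $\f{V}$ for each $s$, which does hold by the uniform $\f{V}$-bound and the pointwise weak $\f{L}^2$-limit.

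Your treatment of the curl and boundary terms, however, is wrong as written, although its conclusion is correct. Identity \eqref{eq:partial} is not available here, because $\overline{\f{H}}_h\in\textbf{DG}_h$ is not in $\f{V}$ and has no tangential trace. Moreover, no boundary term arises at all. The two curl expressions you list are the same number and cancel \emph{exactly}. The one from $\Phi_h-\Phi_r$ is $(\overline{\f{H}}_h-\overline{\f{H}}_r,\curl(\overline{\f{E}}_h-\overline{\f{E}}_r))$. The magnetic part $(\mu_h\partial_t(\f{H}_h-\f{H}_r),\f{H}_h-\f{H}_r)$ produces its negative once you test the discrete magnetic equations with $\overline{\f{H}}_h-\f{Q}_h\overline{\f{H}}_r$ and $\f{Q}_r\overline{\f{H}}_h-\overline{\f{H}}_r$ and use $\curl\,\textbf{ND}_h\subset\textbf{DG}_h$. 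Only $\partial_t\mu$-terms and $\f{Q}$-remainders are left over, and the latter are handled exactly like the $\Pi$-remainders.

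The boundary condition is already built weakly into $\Phi_h$. Hence the term $-(\f{b}(\cdot,\boldsymbol{\nu}\times\overline{\f{E}}_h)-\f{b}(\cdot,\boldsymbol{\nu}\times\overline{\f{E}}_r),\boldsymbol{\nu}\times(\overline{\f{E}}_h-\overline{\f{E}}_r))_{\f{L}^2(\partial\Omega)}$ is nonpositive on its own, by monotonicity of $\f{b}$. Pairing it with an extra boundary term from an integration by parts would double count. With this correction, your argument coincides with the paper's.
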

    \begin{remark}To simplify our notation in the upcoming proof, we   let $c>0$ denote a generic constant, independent of $h,r,t$, which may vary from one expression to another.       \end{remark}

\begin{proof} By the previously described strategy, we investigate the limit inferior in   \eqref{eq:liminf}. To this aim, let $h,r>0$ and $t \in (0,T]$ be arbitrary but fixed. We introduce the error terms
\begin{equation}\label{eq:error}
\begin{split}
    \mathcal{R}_{h,r}^{E,H}(t)\coloneqq &\,\norm{(\f{E}_h,\f{E}_r)(t) - (\overline{\f{E}}_h,\overline{\f{E}}_r)(t)}_{\f{L}^2(\Omega)^2} +\norm{(\f{H}_h,\f{H}_r)(t) - (\overline{\f{H}}_h,\overline{\f{H}}_r)(t)}_{\f{L}^2(\Omega)^2}\\
     \mathcal{R}_{h,r}^{\varepsilon,\mu}(t)\coloneqq &\,\norm{\overline{\varepsilon}_h(t) - \overline{\varepsilon}_r(t)}_{L^\infty(\Omega)^{3\times 3}} + \norm{\varepsilon_h(t) - \overline{\varepsilon}_h(t)}_{L^\infty(\Omega)^{3\times 3}} + \norm{\overline{\mu}_h(t) - \overline{\mu}_r(t)}_{L^\infty(\Omega)^{3\times 3}}\\
    &  + \norm{\mu_h(t) - \overline{\mu}_h(t)}_{L^\infty(\Omega)^{3\times 3}} + \norm{\overline{\mu}_{t,h}(t) - \overline{\mu}_{t,r}(t)}_{L^\infty(\Omega)^{3\times 3}},\\
    \mathcal{R}_{h,r}^{\Pi,1}(t)\coloneqq &\,(\overline{\varepsilon}_h(t)\partial_t \f{E}_h(t), \Pi_h \overline{\f{E}}_r(t) -\overline{\f{E}}_r(t))_{\f{L}^2(\Omega)} - (\overline{\varepsilon}_r(t)\partial_t \f{E}_r(t), \overline{\f{E}}_h(t) - \Pi_r \overline{\f{E}}_h(t))_{\f{L}^2(\Omega)},\\
     \mathcal{R}_{h,r}^{\Pi,2}(t)\coloneqq &\, \Phi_h(\overline{\f{E}}_r(t) - \Pi_h \overline{\f{E}}_r(t)) - \Phi_r(\Pi_r \overline{\f{E}}_h(t) - \overline{\f{E}}_h(t)),\\
     \mathcal{R}_{h,r}^{Q}(t) \coloneqq &\,\left(\overline{\mu}_h(t)\partial_t \f{H}_h(t)+\overline{\mu}_{t,h}(t)\overline{\f{H}}_h(t),\f{Q}_h\overline{\f{H}}_r(t)-\overline{\f{H}}_r(t)\right)_{\f{L}^2(\Omega)}\\
    & +\left(\overline{\mu}_r(t)\partial_t\f{H}_r(t)+\overline{\mu}_{t,r}(t)\overline{\f{H}}_r(t),\f{Q}_r\overline{\f{H}}_h(t)-\overline{\f{H}}_h(t)\right)_{\f{L}^2(\Omega)}.
   \end{split}
\end{equation}
Note  that these error quantities are introduced to improve the  readability of the proof. Although they may yield suboptimal estimates, utilizing them is  sufficient to prove the  uniform convergence result. In view of  \eqref{timedependentscalar} along with $ \varepsilon_h,\mu_h \in W^{1,\infty}(0,T;L^\infty(\Omega)^{3\times 3})$, it holds  that 
    \begin{align*}
 \frac{1}{2}\partial_t\norm{(\f{E}_h - \f{E}_r,\f{H}_h-\f{H}_r)(t)}_{\mathcal{H}_h(t)}^2 & =   \frac{1}{2}(\partial_t {\varepsilon}_h(t)(\f{E}_h - \f{E}_r)(t), (\f{E}_h - \f{E}_r)(t))_{\f{L}^2(\Omega)}\\
 & + \frac{1}{2}(\partial_t {\mu}_h(t)(\f{H}_h - \f{H}_r)(t), (\f{H}_h - \f{H}_r)(t))_{\f{L}^2(\Omega)}\\
  &   + ( {\varepsilon}_h(t) \partial_t(\f{E}_h - \f{E}_r)(t), (\f{E}_h - \f{E}_r)(t))_{\f{L}^2(\Omega)}\\
   &  + (\mu_h\partial_t(\f{H}_h - \f{H}_r)(t),(\f{H}_h-\f{H}_r)(t))_{\f{L}^2(\Omega)}
   =: I_\varepsilon + I_\mu + I_E + I_H,
    \end{align*}
    where all time derivatives  above are understood in the weak sense.
Due to \hyperref[ass:A1]{(A1)} and Proposition \ref{theorem:stability}, we have $$I_\varepsilon \leq c \norm{\f{E}_h(t) - \f{E}_r(t)}_{\f{L}^2(\Omega)}^2,\quad I_\mu \leq c \norm{\f{H}_h(t) - \f{H}_r(t)}_{\f{L}^2(\Omega)}^2$$ 
and
\begin{equation}\label{eq:I_E}
   I_E  \leq (\overline{\varepsilon}_h(t)\partial_t\f{E}_h(t) - \overline{\varepsilon}_r(t)\partial_t\f{E}_r(t), \overline{\f{E}}_h(t) - \overline{\f{E}}_r(t))_{\f{L}^2(\Omega)} + c\mathcal{R}_{h,r}^{E,H}(t)+c\mathcal{R}_{h,r}^{\varepsilon,\mu}(t). 
\end{equation}
The main difficulty now lies in the estimation of the scalar product in \eqref{eq:I_E}. Based on \eqref{eq:Phi_h-eq}, we can write
\begin{equation}\label{eq:est1}
\begin{aligned}
   (\overline{\varepsilon}_h(t)\partial_t\f{E}_h(t) - \overline{\varepsilon}_r(t)\partial_t\f{E}_r(t),&  \overline{\f{E}}_h(t) - \overline{\f{E}}_r(t))_{\f{L}^2(\Omega)}  = (\overline{\varepsilon}_h(t)\partial_t \f{E}_h(t), \overline{\f{E}}_h(t) - \Pi_h \overline{\f{E}}_r(t))_{\f{L}^2(\Omega)} \\
   & - (\overline{\varepsilon}_r(t)\partial_t \f{E}_r(t), \Pi_r \overline{\f{E}}_h(t) - \overline{\f{E}}_r(t))_{\f{L}^2(\Omega)}+\mathcal{R}^{\Pi,1}_{h,r}(t)\\
   & = \Phi_h(\overline{\f{E}}_h(t) - \Pi_h \overline{\f{E}}_r(t)) - \Phi_r(\Pi_r \overline{\f{E}}_h(t) - \overline{\f{E}}_r(t)) +\mathcal{R}^{\Pi,1}_{h,r}(t)\\
   & = \Phi_h(\overline{\f{E}}_h(t) - \overline{\f{E}}_r(t)) - \Phi_r(\overline{\f{E}}_h(t) - \overline{\f{E}}_r(t)) + \mathcal{R}^{\Pi,1}_{h,r}(t) + \mathcal{R}_{h,r}^{\Pi,2}(t).
\end{aligned}
\end{equation}
From the definition   \eqref{eq:Phi_h-def} of $\Phi_h$, we see that the first two terms in the last line of \eqref{eq:est1} are equal to
\begin{align*}
    &  (\overline{\f{f}}_h(t) - \overline{\f{f}}_r(t), \overline{\f{E}}_h(t) - \overline{\f{E}}_r(t))_{\f{L}^2(\Omega)} - (\overline{\f{J}}_h(t) - \overline{\f{J}}_r(t), \overline{\f{E}}_h(t) - \overline{\f{E}}_r(t))_{\f{L}^2(\Omega)}\\
    & + (\overline{\f{H}}_h(t) - \overline{\f{H}}_r(t), \curl(\overline{\f{E}}_h(t) - \overline{\f{E}}_r(t)))_{\f{L}^2(\Omega)}-(\f{b}(\cdot,\boldsymbol{\nu}\times \overline{\f{E}}_h(t)) - \f{b}(\cdot,\boldsymbol{\nu}\times \overline{\f{E}}_r(t)), \boldsymbol{\nu}\times (\overline{\f{E}}_h(t) \\
    & \coloneqq T_f + T_J + T_{\text{curl}} + T_b.
\end{align*}
By taking into account the previous estimates for $I_\varepsilon,I_\mu$ and $I_E$, we arrive at
\begin{equation}\begin{aligned}\label{eq:estmid}
\frac{1}{2}\partial_t\norm{(\f{E}_h - \f{E}_r,\f{H}_h-\f{H}_r)(t)}_{\mathcal{H}_h(t)}^2  \leq c\Bigl(  \norm{\f{E}_h(t) - \f{E}_r(t)&}_{\f{L}^2(\Omega)}^2 + \mathcal{R}_{h,r}^{E,H}(t) + \mathcal{R}_{h,r}^{\varepsilon,\mu}(t)+\lvert \mathcal{R}^{\Pi,1}_{h,r}(t)\rvert \\
& + \lvert \mathcal{R}^{\Pi,2}_{h,r}(t)\rvert + I_H + T_f + T_J + T_\text{curl} + T_b \Bigr).
\end{aligned}
\end{equation}
It remains to estimate the last five terms appearing in the bracket on the right-hand side of \eqref{eq:estmid}. In the following, we will be using Proposition \ref{theorem:stability} without further mention. First of all, it is clear that
$$T_f \leq c \norm{\overline{\f{f}}_h(t) - \overline{\f{f}}_r(t)}_{\f{L}^2(\Omega)} \quad \textrm{and} \quad T_b\leq 0$$
due to the monotonicity of $\f{b}$. Next, let $(n,m)\in \{1,\dots,N(h)\}\times\{1,\dots, N(r)\}$ be the unique element satisfying
$$t\in \left((n-1)\frac{T}{N(h)}, n\frac{T}{N(h)}\right]\cap \left((m-1)\frac{T}{N(r)}, m\frac{T}{N(r)}\right],$$
which, in particular, implies that
$$\left\lvert n\frac{T}{N(h)} - m\frac{T}{N(r)}\right\rvert \leq \frac{T}{N(h)}+\frac{T}{N(r)}.$$
Consequently, by \ref{ass:A4} and \eqref{eq:interpol}, we have
\begin{align*}
    \norm{\overline{\f{J}}_h(t) - \overline{\f{J}}_r(t)}_{\f{L}^2(\Omega)}& = \norm{ \f{J}\left(\cdot,\f{E}^n_h, \f{H}_h^n, n\frac{T}{N(h)}\right) -  \f{J}\left(\cdot,\f{E}^m_r, \f{H}_r^m, m\frac{T}{N(r)}\right)}_{\f{L}^2(\Omega)}\\
    & \leq c\Bigl(\norm{(\overline{\f{E}}_h(t)- \overline{\f{E}}_r(t),\overline{\f{H}}_h(t)- \overline{\f{H}}_r(t))}_{\f{L}^2(\Omega)^2} + \frac{1}{N(h)}+\frac{1}{N(r)}\Bigr)\\
    & \leq c\Bigl(\norm{(\f{E}_h(t)- \f{E}_r(t),\f{H}_h(t)- \f{H}_r(t))}_{\f{L}^2(\Omega)^2} +\mathcal{R}_{h,r}^{E,H}(t)+\frac{1}{N(h)}+\frac{1}{N(r)}\Bigr).
\end{align*}
Now, using the above estimate in combination with
\begin{align*}
    & \norm{(\f{E}_h(t)- \f{E}_r(t),\f{H}_h(t)- \f{H}_r(t))}_{\f{L}^2(\Omega)^2}\norm{\overline{\f{E}}_h(t) - \overline{\f{E}}_r(t)}_{\f{L}^2(\Omega)}\\
    & \leq \norm{(\f{E}_h(t)- \f{E}_r(t),\f{H}_h(t)- \f{H}_r(t))}_{\f{L}^2(\Omega)^2}\left(\norm{\f{E}_h(t)- \f{E}_r(t)}_{\f{L}^2(\Omega)} + \mathcal{R}_{h,r}^{E,H}(t)\right)\\
    & \leq \norm{(\f{E}_h(t)- \f{E}_r(t),\f{H}_h(t)- \f{H}_r(t))}_{\f{L}^2(\Omega)^2}^2 + c\mathcal{R}_{h,r}^{E,H}
(t),\end{align*}
it follows that
\begin{align*}
    &T_J  \leq \norm{\overline{\f{J}}_h(t) - \overline{\f{J}}_r(t)}_{\f{L}^2(\Omega)} \norm{\overline{\f{E}}_h(t) - \overline{\f{E}}_r(t)}_{\f{L}^2(\Omega)}\\
    &\leq c\Bigl(\norm{(\f{E}_h(t) - \f{E}_r(t),\f{H}_h(t) - \f{H}_r(t))}_{\f{L}^2(\Omega)^2}^2 +\mathcal{R}_{h,r}^{E,H}(t)+ \frac{1}{N(h)}+\frac{1}{N(r)}\Bigr).
\end{align*} 
 Finally, using the second variational equalities in \eqref{eq:P-hat} and (P$_r$) with the test functions
 $$\overline{\f{H}}_h(t) - \f{Q}_h \overline{\f{H}}_r(t)\quad \text{and} \quad \f{Q}_r \overline{\f{H}}_h(t) - \overline{\f{H}}_r(t),$$
respectively, we obtain
\begin{equation}
\begin{split}
T_{\mathrm{curl}}
&= \left(\overline{\mu}_r(t)\partial_t\f{H}_r(t)-\overline{\mu}_h(t)\partial_t\f{H}_h(t),
\overline{\f{H}}_h(t)-\overline{\f{H}}_r(t)\right)_{\f{L}^2(\Omega)}\\
&\quad + \left(\overline{\mu}_{t,r}(t)\overline{\f{H}}_r(t)-\overline{\mu}_{t,h}(t)\overline{\f{H}}_h(t),
\overline{\f{H}}_h(t)-\overline{\f{H}}_r(t)\right)_{\f{L}^2(\Omega)}
+\mathcal{R}_{h,r}^{Q}(t).
\end{split}
\end{equation}
Thus, summing up with $I_H= (\mu_h\partial_t(\f{H}_h - \f{H}_r)(t),(\f{H}_h-\f{H}_r)(t))_{\f{L}^2(\Omega)}$, we see that
\begin{equation}
I_H+T_{\mathrm{curl}}
\leq c\left(
\norm{\f{H}_h(t)-\f{H}_r(t)}_{\f{L}^2(\Omega)}^2
+\mathcal{R}_{h,r}^{E,H}(t)
+\mathcal{R}_{h,r}^{\varepsilon,\mu}(t)
+\left|\mathcal{R}_{h,r}^{Q}(t)\right|
\right).
\end{equation}
By applying the estimates for $I_H, T_f, T_J,T_\text{curl},T_b$ to \eqref{eq:estmid}, we conclude that
$$\frac{1}{2}\frac{d}{dt}\norm{(\f{E}_h(t) - \f{E}_r(t), \f{H}_h(t) - \f{H}_r(t))}_{\mathcal{H}_h(t)}^2\leq c\bigg(\norm{(\f{E}_h(t) - \f{E}_r(t), \f{H}_h(t) - \f{H}_r(t))}_{\mathcal{H}_h(t)}^2 +  \Psi_{h,r}(t)\bigg), $$
with
$$ 
    \Psi_{h,r}(t)  \coloneqq  \mathcal{R}_{h,r}^{E,H}(t) + \mathcal{R}_{h,r}^{\varepsilon,\mu}(t)+\lvert \mathcal{R}_{h,r}^{\Pi,1}(t)\rvert + \lvert\mathcal{R}_{h,r}^{\Pi,2}(t)\rvert+ \lvert\mathcal{R}_{h,r}^Q(t)\rvert+\norm{\overline{\f{f}}_h(t) - \overline{\f{f}}_r(t)}_{\f{L}^2(\Omega)}+  \frac{1}{N(h)}+\frac{1}{N(r)}.
$$
Now, Grönwall's inequality in differential form implies
\begin{align*}
    \norm{(\f{E}_h - \f{E}_r,\f{H}_h-\f{H}_r)(t)}_{\mathcal{H}_h(t)}^2\leq c\left(\norm{(\f{E}_h^0 - \f{E}_r^0,\f{H}_h^0-\f{H}_r^0)}_{\mathcal{H}_0}^2 + \int_0^t \Psi_{h,r}(s)\,ds\right).
\end{align*}
By employing the properties of $\Pi_h,\f{Q}_h$ and $\Phi_h$ (in particular \eqref{eq:qhest} and \eqref{eq:infty}), Lemma \ref{lemma:V}, \eqref{eq:E_N overline E_N}-\eqref{eq:fconv} as well as the fact that $h$ is fixed, it follows that
$$\lim_{r\to 0} \int_0^t \Psi_{h,r}(s)\,ds \leq c\int_0^t \Gamma_h(s)\,ds,$$
    where
\begin{align*}
    \Gamma_h(s) & \coloneqq \norm{(\f{E}_h(s)-\overline{\f{E}}_h(s),\f{H}_h(s)-\overline{\f{H}}_h(s))}_{\f{L}^2(\Omega)^2} + \norm{\overline{\varepsilon}_h(s) - \varepsilon(s)}_{L^\infty(\Omega)^{3\times 3}}+ \norm{\varepsilon_h(s) - \overline{\varepsilon}_h(s)}_{L^\infty(\Omega)^{3\times 3}}\\
    & +\norm{\overline{\mu}_h(s) - \mu(s)}_{L^\infty(\Omega)^{3\times 3}}+\norm{\mu_h(s) - \overline{\mu}_h(s)}_{L^\infty(\Omega)^{3\times 3}} +\norm{\overline{\mu}_{t,h}(s) - \partial_t \mu (s)}_{L^\infty(\Omega)^{3\times 3}}\\
    & +   \Bigl|(\overline{\varepsilon}_h(s) \partial_t\f{E}_h(s), \Pi_h \f{E}(s) - \f{E}(s))_{\f{L}^2(\Omega)} \Bigr|+ |\Phi_h(\f{E}(s) - \Pi_h \f{E}(s))|\\
    & +\left|\left(\overline{\mu}_h(s)\partial_t\f{H}_h(s)+\overline{\mu}_{t,h}(s)\overline{\f{H}}_h(s),\f{Q}_h\f{H}(s)-\f{H}(s)\right)_{\f{L}^2(\Omega)}\right|+\norm{\overline{\f{f}}_h(s) - \f{f}(s)}_{\f{L}^2(\Omega)}+\frac{1}{N(h)}.
\end{align*}
Taking into account \eqref{eq:liminf} and \eqref{eq:initialvalues}, we therefore arrive at
\begin{equation}\label{eq:psihrliminf}
\begin{aligned}
\norm{(\f{E}_h - \f{E},\f{H}_h-\f{H})(t)}_{\f{L}^2(\Omega)^2}^2 & \leq c\left(\norm{(\f{E}_h^0 - \f{E}_0,\f{H}_h^0-\f{H}_0)}_{\mathcal{H}_0}^2 + \int_0^T \Gamma_h(s)\,ds\right).
\end{aligned}
\end{equation}
Now, we see that the right-hand side of \eqref{eq:psihrliminf} is independent of $t$ and tends to 0 as $h\to 0$ by Lemma \ref{lemma:V}, \eqref{eq:qhest}, \eqref{eq:initialvalues}, \eqref{eq:E_N overline E_N}, \eqref{eq:coefficientest}, \eqref{eq:fconv} and \eqref{eq:infty}. Thus,  since  $t \in [0,T]$ was chosen arbitrarily, \eqref{eq:psihrliminf} implies that
\begin{equation}\label{eq:strongconvergence}
    (\f{E}_h, \f{H}_h)\quad \to\quad  (\f{E},\f{H})\quad \text{in } C([0,T], \f{L}^2(\Omega) \times\f{L}^2(\Omega))\text{ as } h\to 0.
\end{equation}
Consequently, along with \hyperref[ass:A4]{(A4)}, the above convergence yields
\begin{equation}\label{eq:Jconv}
    \overline{\f{J}}_h \quad \to \quad  \f{J}(\cdot,\f{E},\f{H},\cdot)\quad \text{in } L^\infty(0,T;\f{L}^2(\Omega))\text{ as } h\to 0.
\end{equation}

We are left to show that $(\f{E},\f{H})$ is a solution to \eqref{eq:maxwell intro full}. To see that $(\f{E},\f{H})$ satisfies the first two equations in \eqref{eq:maxwell intro full}, we choose an arbitrary test function $(\f{v},\f{w})\in \f{H}_0(\curl)\times \f{L}^2(\Omega)$ and a sequence $\{(\f{v}_h,\f{w}_h)\}_h \subset \textbf{ND}_h\times \textbf{DG}_h$ such that $(\f{v}_h,\f{w}_h) \to (\f{v},\f{w})$ in $\f{V}\times \f{L}^2(\Omega)$ as $h\to 0$, which is possible by Lemma \ref{lemma:V} and \eqref{eq:qhest}. After passing to the limit in \eqref{eq:P-hat}, by invoking  \eqref{eq:convergences}-\eqref{eq:fconv} and \eqref{eq:strongconvergence}-\eqref{eq:Jconv}, we arrive at $\f{H}\in L^2(0,T;\f{H}(\curl))$ and
\begin{equation}\label{eq:system}
    \begin{cases}
      \epsilon\partial_t\f{E} - \curl\, \textbf{\textit{H}} + \f{J}(\cdot,\f{E},\f{H},\cdot) = \textbf{\textit{f}} & \text{in } \Omega \times (0,T)\\
      \mu\partial_t\f{H} + \curl\, \textbf{\textit{E}} + \partial_t \mu \f{H} = 0 & \text{in } \Omega \times (0,T)\\
      (\f{E},\f{H})(0)=(\f{E}_0,\f{H}_0).
    \end{cases}       
\end{equation}
Again, by choosing an arbitrary $\f{v}\in \f{V}$ and arguing similarly as before, we see via the partial integration formula \eqref{eq:partial} and \eqref{eq:convergences} that $\boldsymbol{\nu} \times \textbf{\textit{H}} = \boldsymbol{\nu} \times  \overline{\f{b}}$. It now remains to prove that $\overline{\f{b}} = \f{b}(\cdot,\boldsymbol{\nu}\times \f{E}).$ Since the mapping
$$\Phi:\f{L}^2(0,T;\f{L}^2(\partial\Omega))\to \f{L}^2(0,T;\f{L}^2(\partial\Omega)),\quad \f{u}\mapsto \f{b}(\cdot,\f{u}),$$
is continuous and monotone by \hyperref[ass:A5]{(A5)}, it is also maximal monotone. By employing \eqref{eq:P-hat}, \eqref{eq:convergences}-\eqref{eq:fconv}, and \eqref{eq:strongconvergence}-\eqref{eq:system} in combination with \eqref{eq:partial} and the definition of $\f{V}$, we calculate:
{\small
\begin{equation*}
    \begin{aligned}
    & \int_0^T (\f{b}(\cdot,\boldsymbol{\nu}\times \overline{\f{E}}_h(t)),\boldsymbol{\nu}\times \overline{\f{E}}_h(t))_{\f{L}^2(\partial \Omega)}\,dt = \int_0^T (\overline{\f{f}}_h(t), \overline{\f{E}}_h(t))_{\f{L}^2(\Omega)}-(\overline{\f{J}}_h(t),\overline{\f{E}}_h(t))_{\f{L}^2(\Omega)} + (\overline{\f{H}}_h(t),\curl\,\overline{\f{E}}_h(t))_{\f{L}^2(\Omega)}\,dt\\
    & -\int_0^T (\overline{\varepsilon}_h(t)\partial_t \f{E}_h(t),\overline{\f{E}}_h(t))_{\f{L}^2(\Omega)}\,dt \overset{h\to 0}{\rightarrow} \int_0^T (\f{f}(t), \f{E}(t))_{\f{L}^2(\Omega)}-(\f{J}(\cdot,\f{E}(t),\f{H}(t),t),\f{E}(t))_{\f{L}^2(\Omega)} + (\f{H}(t),\curl\,\f{E}(t))_{\f{L}^2(\Omega)}\,dt\\
    & -\int_0^T (\varepsilon(t)\partial_t \f{E}(t),\f{E}(t))_{\f{L}^2(\Omega)}\,dt = \int_0^T (\f{H}(t),\curl\,\f{E}(t))_{\f{L}^2(\Omega)} - (\curl\,\f{H}(t),\f{E}(t))_{\f{L}^2(\Omega)}\,dt = \int_0^T (\overline{\f{b}},\boldsymbol{\nu}\times \f{E}(t))_{\f{L}^2(\partial \Omega)}\,dt.
    \end{aligned}
\end{equation*}}
The conclusion follows from \cite[Chapter 4, Proposition 1.6]{showalter}.\end{proof}

 \begin{subsection}{Exponential stability}\label{section:exponentialstability}
This section analyzes the exponential stability of the proposed numerical scheme as $n \to \infty$, where all given data may now vary over the infinite time horizon $[0, \infty)$. Throughout this section, let $h, \tau  >0$ and set $t_n \coloneqq n\tau$ for every $n\in\mathbb{N}_0$.  

 \begin{assumption}\label{assumption2}
  
\begin{enumerate}[label=(B\arabic*)]

{\normalfont
\item[]
\item \phantomsection \label{ass:B1} The electric permittivity satisfies $\varepsilon\in W^{2,\infty}_{\text{loc}}(0,\infty;L^\infty_{\text{sym}}(\Omega)^{3\times 3}) \cap W^{1,\infty}(0,\infty;L^\infty_{\text{sym}}(\Omega)^{3\times 3})$, and  the magnetic permeability  satisfies  $\mu\in W^{2,\infty}(0,\infty;L^\infty_\text{sym}(\Omega)^{3\times 3})$  and $\mu(t)=\mu_\infty$ for all $t\geq T_\mu$ for some $T_\mu\geq 0$ and uniformly positive definite $\mu_\infty\in L_\text{sym}^\infty(\Omega)^{3\times 3}$. Furthermore, \eqref{eq:uniformposdef} is valid  for a.e. $x\in \Omega$ and all $t\in [0,\infty)$.

\item \phantomsection \label{ass:B2} The total current density $\f{J}_{tot}:\Omega\times \mathbb{R}^3\times \mathbb{R}^3\times [0,\infty)\to \mathbb{R}^3$ is measurable w.r.t the first variable with $\f{J}_{tot}(\cdot,0,0,0)\in \f{L}^2(\Omega)$.   There exists  $L_{tot}>0$ such that  $\f{J}_{tot}$ satisfies \eqref{eq:J_tot-estimate} for a.e. $x\in \Omega$ and all $\xi,\xi',\eta,\eta'\in \mathbb{R}^3, t,t'\in [0,\infty)$. Furthermore, there exists $T_J\geq 0$ such that  for all $t \ge T_J$ and $\lambda \ge 0$, 
$$ \f{J}_{tot}(x,\lambda \xi,\eta,t) = \lambda \f{J}_{tot}(x,\xi,\eta,t) \quad \textrm{and} \quad \f{J}_{tot}(x,\xi,\eta,t)=\f{J}_{tot}(x,\xi,\eta',t) $$ 
hold for a.e. $x \in \Omega$ and  all $\xi,\eta,\eta'\in \mathbb{R}^3$.
\item \phantomsection \label{ass:B3} The nonlinearity $\f{b}$ satisfies \ref{ass:A5} and $\f{b}(x,\lambda \xi) = \lambda \f{b}(x,\xi)$   for a.e. $ x\in \partial \Omega$ and all $\xi \in \mathbb{R}^3,\lambda\geq 0.$
\item \phantomsection \label{ass:B4} It holds that $\f{f}\in H^1(0,\infty;\f{L}^2(\Omega))$ and there exists $T_f \ge0$ such that   $\f{f}(t) = 0$ for all $t \ge T_f$.
\item \phantomsection\label{ass:B5} There exists $\gamma > 0$ such that  
\begin{align*}
    & \frac{1}{2}(\varepsilon(t_n)(\f{v}_h-\hat{\f{v}}_h),\f{v}_h-\hat{\f{v}}_h)_{\f{L}^2(\Omega)} + \tau (\f{J}(\cdot,\f{v}_h,t_n),\f{v}_h)_{\f{L}^2(\Omega)}\\
    & -\frac{1}{2}((\varepsilon(t_n)-\varepsilon(t_{n-1}))\hat{\f{v}}_h,\hat{\f{v}}_h)_{\f{L}^2(\Omega)}\geq \gamma \norm{\f{v}_h}_{\f{L}^2(\Omega)}^2   \quad \forall (\f{v}_h,\hat{\f{v}}_h)\in \textbf{ND}_h\times \textbf{ND}_h,  t_n \geq T_J.
\end{align*}
}
\end{enumerate}
\end{assumption}
\begin{remark}  

The condition \ref{ass:B2} states that for all $t \ge T_J$, the   total current density $\f{J}_{tot}$ becomes  positively homogeneous with respect to the second variable  (electric field). Let us emphasize that the positive homogeneity together with the Lipschitz condition \eqref{eq:J_tot-estimate}   readily implies that $\f{J}_{tot}$ must be independent of the third variable, which we have explicitly included in \ref{ass:B2}. Note that these   properties remain valid for the induced nonlinearity $\f{J}(x,\xi,\eta,t)=\f{J}_{tot}(x,\xi,\eta,t) + \partial_t\varepsilon(x,t)\xi$ (see \eqref{def J}). Thus, we have omitted the dependence of $\f{J}$ on  $\eta$ in \ref{ass:B5} and in the following analysis as soon as  $t \ge T_J$.

\end{remark}
 \begin{remark} The positive homogeneity of $\f b$ in the second component along with \eqref{eq:propertyb} implies for a.e. $x \in \partial \Omega$ and all $\lambda >0$, $\xi \in \mathbb R$ that 
 $ \lambda |\f b (x, \xi)| = |\f b (x, \lambda \xi)| \le \overline b (\lambda |\xi|+1) $  and so $|\f b (x, \xi)| \le \overline b (|\xi| + 1/\lambda)$. Thus letting $\lambda \to \infty$, we conclude that 
 \begin{equation} \label{con:b}
   \lvert  \textbf{\textit{b}}(x,\xi) \rvert \leq \overline b  \lvert \xi\rvert   \  \textrm{for a.e. }  x\in \partial \Omega \text{ and all }  \xi \in \mathbb R^3
 \end{equation}   
 holds under \ref{ass:B3}.
 \end{remark}
\begin{remark} \label{rem:(B5)}
Assumption \ref{ass:B5} is satisfied if there exist constants $c_J,L_\varepsilon>0$ such that
        $$(\f{J}(\cdot,\f{v}_h,t_n),\f{v}_h)_{\f{L}^2(\Omega)}\geq  c_J \norm{\f{v}_h}_{\f{L}^2(\Omega)}^2,\quad \norm{\varepsilon(t_n)-\varepsilon(t_{n-1})}_{L^\infty(\Omega)^{3\times 3}}\leq \tau L_\varepsilon,\quad \tau L_\varepsilon\leq \frac{1}{2}\underline{\varepsilon},\quad L_\varepsilon<c_J,$$
       for all $\f{v}_h \in \textbf{ND}_h$ and all $t_n \geq T_J,n\in \mathbb{N}$. In this case, we have 
       $$-\frac{1}{2}((\varepsilon(t_n)-\varepsilon(t_{n-1}))\hat{\f{v}}_h,\hat{\f{v}}_h)_{\f{L}^2(\Omega)}\geq -\frac{1}{2}\tau L_\varepsilon\norm{\hat{\f{v}}_h}_{\f{L}^2(\Omega)}^2\geq -\tau L_\varepsilon(\norm{\f{v}_h-\hat{\f{v}}_h}_{\f{L}^2(\Omega)}^2 +\norm{\f{v}_h}_{\f{L}^2(\Omega)}^2),$$
       from which it follows that \ref{ass:B5} holds with $\gamma\coloneqq \tau(c_J - L_\varepsilon)$.
\end{remark}
   Based on \eqref{time-space-discrete} and \eqref{eq:discrete-equation}, we now consider, for every $n\in \mathbb{N}$,  the following variational problem: Given $(\f{v},\f{w})\in \textbf{ND}_h\times \textbf{DG}_h$, find $(\f{E}^{\f{v},\f{w}},\f{H}^{\f{v},\f{w}})\in \textbf{ND}_h\times \textbf{DG}_h$ such that 
     \begin{equation}\label{eq:S_tau,h}
\begin{dcases}
\int_\Omega \varepsilon(t_n) \frac{\f{E}^{\f{v},\f{w}} - \f{v}}{\tau}\cdot \f{v}_h - \f{H}^{\f{v},\f{w}}\cdot \curl\,\f{v}_h +  \f{J}(\cdot,\f{E}^{\f{v},\f{w}},\f{H}^{\f{v},\f{w}}, t_n)\cdot \f{v}_h \,dx\\
+ \int_{\partial \Omega} \f{b}(\cdot,\boldsymbol{\nu}\times \f{E}^{\f{v},\f{w}})\cdot \boldsymbol{\nu}\times \f{v}_h\,dS = \int_\Omega \f{f}^h_n\cdot \f{v}_h\,dx \quad \forall \f{v}_h\in \textbf{ND}_h, \\
\mu_h^n \frac{\f{H}^{\f{v},\f{w}} - \f{w}}{\tau} + \curl\,\f{E}^{\f{v},\f{w}} + \mu_{t,h}^n \f{H}^{\f{v},\f{w}} = 0.
\end{dcases}
\end{equation}
Here, we recall that $\mu_h^n= \f{Q}_h \mu(t_n)$ and $\mu_{t,h}^n = \f{Q}_h \partial_t \mu(t_n) $. Note that   Assumption \ref{assumption2} particularly implies that   \ref{ass:A1}-\ref{ass:A5} hold on every finite time interval $[0,T]$.  Thus, Propositions \ref{theorem:existence} and \ref{theorem:existence2} provide (sufficient) conditions ensuring the existence and uniqueness of a solution to \eqref{eq:S_tau,h} for all $n\in \mathbb{N}$. For the remainder of this section, the well-posedness will be explicitly assumed:

 \begin{assumption}\label{assumption2b} \normalfont For every $n \in \mathbb N$, the solution operator associated with \eqref{eq:S_tau,h} $$S_{h,\tau}^n:\textbf{ND}_h\times \textbf{DG}_h\to \textbf{ND}_h\times \textbf{DG}_h,\quad (\f{v},\f{w})\mapsto (\f{E}^{\f{v},\f{w}},\f{H}^{\f{v},\f{w}})$$
 is well-defined. 
 \end{assumption}

 By the explicit use of the solution operator $S^n_{h,\tau}$, the fully discrete scheme \eqref{time-space-discrete} on the finite time interval $[0,T]$ can be concisely written as 
$$S_{h,\tau}^n(\f{E}_h^{n-1},\f{H}_h^{n-1}) = (\f{E}_h^n,\f{H}_h^n)\quad \forall n\in \{1,\dots,N\},\quad (\f{E}_h^0,\f{H}_h^0)\coloneqq (\Pi_h \f{E}_0, \f{Q}_h\f{H}_0)\in \textbf{ND}_h\times \textbf{DG}_h.$$
Our goal now is to investigate the asymptotic energy behavior of $(\f{E}_h^n,\f{H}_h^n)$   for $n \to \infty$. To this aim,  we define the discrete electromagnetic energy of $(\f{v},\f{w})\in \textbf{ND}_h \times \textbf{DG}_h$ at $t=t_n$ by 
\begin{equation}\label{eq:discrete-EME}
    \mathcal{E}_h^n(\f{v},\f{w}) \coloneqq  \frac{1}{2}\left((\varepsilon(t_n)\f{v},\f{v})_{\f{L}^2(\Omega)} + (\mu_h^n\f{w},\f{w})_{\f{L}^2(\Omega)}\right)  \quad \forall n \in \mathbb{N}_0
\end{equation}
and introduce the spaces
\begin{equation} \label{def:Wh}
\begin{aligned}
         \mathcal W_h & \coloneqq \textbf{ND}_h \times \mu_{h,\infty}^{-1}\curl\,\textbf{ND}_h\subset \f{L}^2(\Omega)\times \f{L}_{\mu_{h,\infty}}^2(\Omega), \quad \mu_{h,\infty}\coloneqq \f{Q}_h \mu_\infty,\\
     \mathcal{N}_h&\coloneqq \mathcal W_h^{\perp} = \{0\}\times \{\f{w}_h\in \textbf{DG}_h : (\f{w}_h,\curl\,\f{v}_h)_{\f{L}^2(\Omega)} = 0\quad \forall \f{v}_h\in \textbf{ND}_h\}.
     \end{aligned}
     \end{equation}
    In particular, it holds that
     $\textbf{ND}_h \times \textbf{DG}_h = \mathcal W_h\oplus \mathcal{N}_h.$
Let us now set 
$$m\coloneqq \min \left\{n\in \mathbb{N}_0: t_n \geq \max\{T_J,T_f,T_\mu\}\right\} = \left \lceil \frac{\max\{T_J,T_f,T_\mu\}}{\tau}\right\rceil.$$
Then, in view of \ref{ass:B1}-\ref{ass:B4} and \eqref{eq:S_tau,h}, it holds for all $n \ge m$ that 
\begin{equation}\label{eq:S_tau,hfinal}
 S_{h,\tau}^n(\f{v},\f{w}) = (\f{E}^{\f{v},\f{w}},\f{H}^{\f{v},\f{w}}) \  \Leftrightarrow \ 
\begin{dcases}
\int_\Omega \varepsilon(t_n) \frac{\f{E}^{\f{v},\f{w}} - \f{v}}{\tau}\cdot \f{v}_h - \f{H}^{\f{v},\f{w}}\cdot \curl\,\f{v}_h +  \f{J}(\cdot,\f{E}^{\f{v},\f{w}},  t_n)\cdot \f{v}_h \,dx\\
+ \int_{\partial \Omega} \f{b}(\cdot,\boldsymbol{\nu}\times \f{E}^{\f{v},\f{w}})\cdot \boldsymbol{\nu}\times \f{v}_h\,dS =0 \quad \forall \f{v}_h\in \textbf{ND}_h, \\
\mu_{h,\infty} \frac{\f{H}^{\f{v},\f{w}} - \f{w}}{\tau} + \curl\,\f{E}^{\f{v},\f{w}}  = 0.
\end{dcases}
\end{equation}
Also, for later use, we note that 
\begin{equation}\label{eq:alles0}
  \quad \f{b}(\cdot,0)=0 \quad \textrm{and} \quad  \f{J}(\cdot,0,t_n)=0 \quad \forall n \ge m.
\end{equation}
     The goal of the remainder of this section is to prove the following fully discrete  unconditional exponential stability result:

\begin{theorem}\label{theorem-exp-decay}
         \normalfont Let $h, \tau >0$  and suppose that Assumptions \ref{assumption2} and \ref{assumption2b} are satisfied. Let $(\f{E}_h^0,\f{H}_h^0)\in \textbf{ND}_h\times \textbf{DG}_h$ be given and set 
         \begin{equation} \label{eq:deftheo}(\f{E}_h^n,\f{H}_h^n)\coloneqq S_{h,\tau}^n(\f{E}_h^{n-1},\f{H}_h^{n-1})\quad \forall n\in \mathbb{N}.\end{equation}
         Furthermore, write 
         \begin{equation} \label{eq:decomptheo}(\f{E}_h^{m},\f{H}_h^{m}) = (\f{E}_{h}^*, \f{H}_{h}^*)+(0,\f{H}_{h,\infty})\in \mathcal W_h\oplus \mathcal{N}_h.
         \end{equation}
        Then, there exists a positive constant $c_{h,\tau}>0$ such that 
        \begin{equation}\label{eq:expoest} 
            \mathcal{E}_h^n({\f{E}}_h^n,{\f{H}}_h^n)\leq \mathcal{E}_h^{m}(0,\f{H}_{h,\infty}) + e^{-c_{h,\tau}(t_n-t_{m})}\mathcal{E}_h^{m}(\f{E}_h^*,\f{H}_h^*)   \quad \forall n\geq {m+1}.
        \end{equation}
     \end{theorem}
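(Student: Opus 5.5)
The plan is to reduce the dynamics for $n\ge m$ to the subspace $\mathcal W_h$, prove a one-step energy contraction there with a factor $q_{h,\tau}<1$ independent of $n$, and then iterate. Throughout, $n\ge m+1$, so that \eqref{eq:S_tau,hfinal} and \eqref{eq:alles0} are in force.

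\emph{Step 1: invariance and splitting.} Let $(\f{v},\f{w})=(\f{v}^*,\f{w}^*)+(0,\f{w}_\infty)\in\mathcal W_h\oplus\mathcal N_h$. I claim that
$$S^n_{h,\tau}(\f{v},\f{w})=S^n_{h,\tau}(\f{v}^*,\f{w}^*)+(0,\f{w}_\infty).$$
First, $\f{w}_\infty$ is $\mathbf{L}^2$-orthogonal to $\curl\,\textbf{ND}_h$, so it drops out of the first equation of \eqref{eq:S_tau,hfinal}. Second, it cancels in the difference quotient of the second equation. Uniqueness (Assumption \ref{assumption2b}) then gives the claim.

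Next, the second equation gives $\f{H}=\f{w}^*-\tau\mu_{h,\infty}^{-1}\curl\,\f{E}\in\mu_{h,\infty}^{-1}\curl\,\textbf{ND}_h$. Hence $S^n_{h,\tau}$ maps $\mathcal W_h$ into itself.

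Finally, for $n\ge m$ we have $\mu_h^n=\mu_{h,\infty}$, and
$$(\mu_{h,\infty}\f{w}_\infty,\mu_{h,\infty}^{-1}\curl\,\f{v}_h)=(\f{w}_\infty,\curl\,\f{v}_h)=0.$$
So the energy splits as $\mathcal E^n_h(\f{v},\f{w})=\mathcal E^n_h(\f{v}^*,\f{w}^*)+\mathcal E^n_h(0,\f{w}_\infty)$, and $\mathcal E_h^n(0,\f{H}_{h,\infty})=\mathcal E_h^m(0,\f{H}_{h,\infty})$. By induction from \eqref{eq:decomptheo}, $(\f{E}_h^n,\f{H}_h^n)=(\f{E}_h^{n,*},\f{H}_h^{n,*})+(0,\f{H}_{h,\infty})$ with $(\f{E}_h^{n,*},\f{H}_h^{n,*})$ generated by $S^n_{h,\tau}$ inside $\mathcal W_h$ from $(\f{E}_h^*,\f{H}_h^*)$. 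It therefore suffices to prove \eqref{eq:expoest} for data in $\mathcal W_h$, with $\mathcal E_h^m(0,\f{H}_{h,\infty})$ added at the end.

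\emph{Step 2: energy identity.} Take $(\f{v},\f{w})\in\mathcal W_h$ and $(\f{E},\f{H})=S^n_{h,\tau}(\f{v},\f{w})$. Test the first equation with $\tau\f{E}$ and the second with $\tau\f{H}$, then add; the curl terms cancel. Use $(a-b,a)=\tfrac12|a|^2-\tfrac12|b|^2+\tfrac12|a-b|^2$ in both weighted inner products, and split
$$\tfrac12(\varepsilon(t_n)\f{v},\f{v})=\tfrac12(\varepsilon(t_{n-1})\f{v},\f{v})+\tfrac12((\varepsilon(t_n)-\varepsilon(t_{n-1}))\f{v},\f{v}).$$
This yields
$$\mathcal E^{n-1}_h(\f{v},\f{w})-\mathcal E^n_h(\f{E},\f{H})=\Big[\tfrac12\|\f{E}-\f{v}\|^2_{\varepsilon(t_n)}+\tau(\f{J}(\cdot,\f{E},t_n),\f{E})-\tfrac12((\varepsilon(t_n)-\varepsilon(t_{n-1}))\f{v},\f{v})\Big]+\tfrac12\|\f{H}-\f{w}\|^2_{\mu_{h,\infty}}+\tau(\f{b}(\cdot,\boldsymbol\nu\times\f{E}),\boldsymbol\nu\times\f{E})_{\partial\Omega}.$$
The boundary term is nonnegative by monotonicity of $\f{b}$ and $\f{b}(\cdot,0)=0$. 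Applying \ref{ass:B5} with $\hat{\f{v}}_h=\f{v}$ shows that the bracket is at least $\gamma\|\f{E}\|^2$. Since the bracket is also the same expression with the $\|\f{E}-\f{v}\|^2$ term included, I will in fact keep the form
$$D:=\min\{1,\gamma\}\big(\|\f{E}\|^2+\|\f{E}-\f{v}\|^2\big)/c+\tfrac12\|\f{H}-\f{w}\|^2_{\mu_{h,\infty}}\le \mathcal E^{n-1}_h(\f{v},\f{w})-\mathcal E^n_h(\f{E},\f{H}).$$
Getting control of $\|\f{E}-\f{v}\|^2$ needs a little care: split $\tfrac12\|\f{E}-\f{v}\|_\varepsilon^2$ in half, or note that \ref{ass:B5} at slightly reduced $\gamma$ still dominates.

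\emph{Step 3: the dissipation controls the old energy (main obstacle).} I need a constant $K=K_{h,\tau}$, independent of $n$, with $\mathcal E^{n-1}_h(\f{v},\f{w})\le K D$. This is the quantitative form of the sup--max problem: positive homogeneity makes the ratio $\mathcal E^n_h(S^n_{h,\tau}(\f{v},\f{w}))/\mathcal E^{n-1}_h(\f{v},\f{w})$ scale-invariant, and the bound shows it stays below $1$ uniformly in $n$. I would prove the bound directly rather than by compactness, since $\varepsilon(t_n)$ varies with $n$.

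For the electric part, $\|\f{v}\|\le\|\f{E}\|+\|\f{E}-\f{v}\|$. For the magnetic part, $\|\f{w}\|\le\|\f{H}\|+\|\f{H}-\f{w}\|$, so it remains to bound $\|\f{H}\|$. Since $\f{H}\in\mu_{h,\infty}^{-1}\curl\,\textbf{ND}_h$, we may write $\f{H}=\mu_{h,\infty}^{-1}\curl\,\f{z}_h$. Finite dimensionality then gives a discrete inf--sup bound
$$\|\f{H}\|\le C_h\sup_{\f{v}_h}\frac{(\f{H},\curl\,\f{v}_h)}{\|\f{v}_h\|}.$$
By the first equation of \eqref{eq:S_tau,hfinal}, this supremum is at most
$$c\Big(\tfrac{\overline\varepsilon}{\tau}\|\f{E}-\f{v}\|+L\|\f{E}\|+\overline b\,\tfrac{C^\Gamma_{\rm inv}}{\sqrt h}\|\boldsymbol\nu\times\f{E}\|\Big)\le c_{h,\tau}\big(\|\f{E}-\f{v}\|+\|\f{E}\|\big),$$
using the Lipschitz bound for $\f{J}$ with $\f{J}(\cdot,0,t_n)=0$, the bound \eqref{con:b}, and \eqref{eq:inverse-trace}. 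All constants here are uniform in $n$ by \ref{ass:B1}--\ref{ass:B3}. Combining the pieces gives $\mathcal E^{n-1}_h(\f{v},\f{w})\le KD$.

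\emph{Step 4: iteration.} Steps 2 and 3 give
$$\mathcal E^n_h(S^n_{h,\tau}(\f{v},\f{w}))\le(1-K^{-1})\,\mathcal E^{n-1}_h(\f{v},\f{w})\quad\text{on }\mathcal W_h .$$
Iterating from $n=m+1$ and setting $c_{h,\tau}:=-\tau^{-1}\ln(1-K^{-1})$ yields $\mathcal E^n_h(\f{E}_h^{n,*},\f{H}_h^{n,*})\le e^{-c_{h,\tau}(t_n-t_m)}\mathcal E^m_h(\f{E}_h^*,\f{H}_h^*)$. Adding the invariant part $\mathcal E_h^m(0,\f{H}_{h,\infty})$ from Step 1 gives \eqref{eq:expoest}.

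The main obstacle is Step 3: ensuring that $K$ does not depend on $n$, and that \ref{ass:B5} provides enough of $\|\f{E}-\f{v}\|^2$ to control $\f{v}$ and the difference-quotient term in the inf--sup bound.
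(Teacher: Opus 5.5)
Your route is valid and genuinely different from the paper's. One step (control of $\|\f{E}-\f{v}\|^2$) needs a proper argument, given in the second paragraph.

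\textbf{Comparison.} Your Steps 1, 2 and 4 reuse the paper's ingredients:
\begin{itemize}
\item the additivity \eqref{eq:additive} and the invariance of $\mathcal W_h$;
\item the energy inequality of Lemma~\ref{lemma:energy-est}, obtained from \ref{ass:B5} with $\f{v}_h=\f{E}$, $\hat{\f{v}}_h=\f{v}$;
\item the final iteration with $c_{h,\tau}=-\tau^{-1}\ln q_{h,\tau}$.
\end{itemize}
The real difference is Step 3. The paper proves $\sup_{n\ge m+1}q_n<1$ qualitatively. Continuity and positive homogeneity of $S^n_{h,\tau}$ turn each ratio into a maximum over the compact unit-energy sphere of $\mathcal W_h$. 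A contradiction argument along $n_k\to\infty$ then shows that a limit of almost energy-conserving states would be a nonzero $(0,\f{H})\in\mathcal W_h\cap\mathcal N_h$. Your Step 3 is the quantitative version of that argument. The discrete inf--sup bound for $\curl$ on $\mu_{h,\infty}^{-1}\curl\,\textbf{ND}_h$, together with the first equation of \eqref{eq:S_tau,hfinal}, replaces the limit statement $(\f{H},\curl\,\f{v}_h)=0$. This gives an explicit $K_{h,\tau}$, hence a trackable rate. It uses only the linear bounds $|\f{J}(\cdot,\xi,t_n)|\le L|\xi|$ and \eqref{con:b}, not homogeneity or continuity of $S^n_{h,\tau}$ itself. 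In exchange, the paper's route needs no constants. In particular it never has to bound $\|\f{E}-\f{v}\|$ by the dissipation, because $\f{v}_k\to 0$ falls out of the energy balance in the limit.

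\textbf{The step that needs repair.} Controlling $\|\f{E}-\f{v}\|^2$ does not go through as you wrote it. Halving $\tfrac12\|\f{E}-\f{v}\|^2_{\varepsilon(t_n)}$ leaves an expression with factor $\tfrac14$, which is not an instance of \ref{ass:B5}. For $\f{E}=0$ that expression equals $\tfrac12\|\f{v}\|^2_{\varepsilon(t_{n-1})}-\tfrac14\|\f{v}\|^2_{\varepsilon(t_n)}$, which is negative if $\varepsilon(t_n)>2\varepsilon(t_{n-1})$. Your other suggestion, ``\ref{ass:B5} at slightly reduced $\gamma$'', is exactly what has to be proved. The claim is true, though. \ref{ass:B5} holds for every $\hat{\f{v}}_h$, and for fixed $\f{E}$ its left-hand side with $\f{v}_h=\f{E}$ equals
\[
Q(\hat{\f{v}}_h):=\tfrac12\|\hat{\f{v}}_h\|^2_{\varepsilon(t_{n-1})}-(\varepsilon(t_n)\f{E},\hat{\f{v}}_h)_{\f{L}^2(\Omega)}+\tfrac12\|\f{E}\|^2_{\varepsilon(t_n)}+\tau(\f{J}(\cdot,\f{E},t_n),\f{E})_{\f{L}^2(\Omega)}.
\]
This is a quadratic in $\hat{\f{v}}_h$ with Hessian $\varepsilon(t_{n-1})\ge\underline\varepsilon$, and your bracket is $Q(\f{v})$. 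Let $\hat{\f{v}}^*\in\textbf{ND}_h$ be its minimizer. Then $\|\hat{\f{v}}^*\|\le(\overline\varepsilon/\underline\varepsilon)\|\f{E}\|$, and
\[
Q(\f{v})=Q(\hat{\f{v}}^*)+\tfrac12\|\f{v}-\hat{\f{v}}^*\|^2_{\varepsilon(t_{n-1})}\ge\gamma\|\f{E}\|^2+\tfrac{\underline\varepsilon}{2}\|\f{v}-\hat{\f{v}}^*\|^2 .
\]
This bounds $\|\f{E}\|^2+\|\f{E}-\f{v}\|^2$ up to a constant depending only on $\gamma,\underline\varepsilon,\overline\varepsilon$, hence uniformly in $n$.

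Alternatively, you can mimic the paper and use the energy balance
\[
\tfrac12\|\f{v}\|^2_{\varepsilon(t_{n-1})}=\mathcal E^{n-1}_h(\f{v},\f{w})-\mathcal E^n_h(\f{E},\f{H})+\tfrac12\|\f{E}\|^2_{\varepsilon(t_n)}+(\mu_{h,\infty}(\f{H}-\f{w}),\f{H})_{\f{L}^2(\Omega)}-\tfrac12\|\f{H}-\f{w}\|^2_{\mu_{h,\infty}} .
\]
Combine it with your bound $\|\f{H}\|\le c_{h,\tau}(2\|\f{E}\|+\|\f{v}\|)$ and Young's inequality to absorb $\|\f{v}\|^2$. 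With either repair, your argument is complete.
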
\begin{corollary}[Exponential decay]\label{corollary:exponential-decay}
         \normalfont Let the   assumptions of Theorem \ref{theorem-exp-decay} be satisfied. If $(\f{E}_h^{m},\f{H}_h^{m})\in \mathcal W_h$, then $\{\mathcal{E}_h^n(\f{E}_h^n,\f{H}_h^n)\}_{n=1}^\infty$ decays exponentially. In particular, this holds  if $T_\mu=0$ and $(\f{E}_h^0,\f{H}_h^0)\in \mathcal W_h$.
     \end{corollary}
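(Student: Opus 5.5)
The plan is to obtain both claims of Corollary \ref{corollary:exponential-decay} directly from Theorem \ref{theorem-exp-decay}. The first claim follows once the kernel component is shown to vanish. The second follows from an invariance argument for $\mathcal W_h$ under the solution operators $S_{h,\tau}^n$.

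\emph{First claim.} Suppose $(\f{E}_h^m,\f{H}_h^m)\in \mathcal W_h$. Since $\textbf{ND}_h\times\textbf{DG}_h=\mathcal W_h\oplus\mathcal N_h$ is a direct sum, the decomposition \eqref{eq:decomptheo} is unique. Hence $(\f{E}_h^*,\f{H}_h^*)=(\f{E}_h^m,\f{H}_h^m)$ and $\f{H}_{h,\infty}=0$. Then $\mathcal{E}_h^m(0,0)=0$, and \eqref{eq:expoest} reduces to
\begin{equation*}
\mathcal{E}_h^n(\f{E}_h^n,\f{H}_h^n)\le e^{-c_{h,\tau}(t_n-t_m)}\,\mathcal{E}_h^m(\f{E}_h^m,\f{H}_h^m)\quad\forall n\ge m+1 .
\end{equation*}
For the finitely many indices $1\le n\le m$, the energies are finite numbers. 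Set
\begin{equation*}
C:=\max_{0\le n\le m}\mathcal{E}_h^n(\f{E}_h^n,\f{H}_h^n)\,e^{c_{h,\tau}t_m}.
\end{equation*}
With this choice, $\mathcal{E}_h^n(\f{E}_h^n,\f{H}_h^n)\le C e^{-c_{h,\tau}t_n}$ holds for all $n\in\mathbb N$, which is the claimed exponential decay. For $n\le m$ this uses $e^{-c_{h,\tau}t_n}\ge e^{-c_{h,\tau}t_m}$.

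\emph{Second claim.} Assume $T_\mu=0$. Then $\mu(t)=\mu_\infty$ for all $t\ge0$, so $\partial_t\mu\equiv0$, $\mu_h^n=\f{Q}_h\mu_\infty=\mu_{h,\infty}$ and $\mu_{t,h}^n=0$ for every $n$. The second equation of \eqref{eq:S_tau,h} therefore reads
\begin{equation*}
\f{H}^{\f{v},\f{w}}=\f{w}-\tau\,\mu_{h,\infty}^{-1}\curl\,\f{E}^{\f{v},\f{w}} .
\end{equation*}
Here $\mu_{h,\infty}$ is elementwise constant and uniformly positive definite by \eqref{ineq:munt}, so its inverse acts elementwise on $\textbf{DG}_h$. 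Suppose $\f{w}\in\mu_{h,\infty}^{-1}\curl\,\textbf{ND}_h$. Since $\curl\,\f{E}^{\f{v},\f{w}}\in\curl\,\textbf{ND}_h$ and $\mu_{h,\infty}^{-1}\curl\,\textbf{ND}_h$ is a linear space, it follows that $\f{H}^{\f{v},\f{w}}\in\mu_{h,\infty}^{-1}\curl\,\textbf{ND}_h$. Thus $S_{h,\tau}^n(\mathcal W_h)\subset\mathcal W_h$ for every $n\in\mathbb N$. Starting from $(\f{E}_h^0,\f{H}_h^0)\in\mathcal W_h$, induction on $n$ gives $(\f{E}_h^n,\f{H}_h^n)\in\mathcal W_h$ for all $n$, in particular for $n=m$, and the first claim applies.

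There is no substantial obstacle, since the heavy lifting is contained in Theorem \ref{theorem-exp-decay}. The only points needing care are the uniqueness of the decomposition, which forces $\f{H}_{h,\infty}=0$, and extending the estimate to the initial indices $n\le m$ by adjusting the constant.
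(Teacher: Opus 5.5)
Your proposal is correct and follows the paper's proof: the first claim comes from \eqref{eq:decomptheo} and \eqref{eq:expoest} with $\f{H}_{h,\infty}=0$, and the second from invariance of $\mathcal W_h$ under the second equation of \eqref{eq:S_tau,h} once $\mu_h^n=\mu_{h,\infty}$ and $\mu_{t,h}^n=0$ for all $n$. Your explicit uniqueness-of-decomposition step and the enlarged constant for the indices $n\le m$ just spell out what the paper calls ``clear.''
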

     \begin{proof}[Proof of Corollary \ref{corollary:exponential-decay}]
         The first part is clear from \eqref{eq:decomptheo} and \eqref{eq:expoest}. Next, let us assume that $(\f{E}_h^0,\f{H}_h^0)\in \mathcal W_h$ and $T_\mu=0$, i.e. $\mu(t)=\mu_\infty$ for all $t\in [0,\infty)$. Then by the second equation in \eqref{eq:S_tau,h} along with $\mu^n_{h,t} \equiv 0$ for all $n \in \mathbb N$,  it follows that $(\f{E}_h^n,\f{H}_h^n)\in \mathcal W_h$ for all $n\in \mathbb N$ and thus $(\f{E}_h^{m},\f{H}_h^{m})\in \mathcal W_h$, which completes our proof.
     \end{proof}
Before proceeding to the proof of Theorem \ref{theorem-exp-decay}, we shall need some auxiliary results.
     \begin{lemma}\label{lemma:energy-est}
         \normalfont Let $(\f{v},\f{w})\in \textbf{ND}_h \times \textbf{DG}_h$ and $n\geq m+1$. Then 
         \begin{equation}\label{eq:energy-est}
             \mathcal{E}^n_h(S_{h,\tau}^n(\f{v},\f{w})) - \mathcal{E}^{n-1}_h(\f{v},\f{w}) + \gamma \norm{\f{E}^{\f{v},\f{w}}}_{\f{L}^2(\Omega)}^2 + \frac{1}{2} \norm{\f{H}^{\f{v},\f{w}}-\f{w}}_{\f{L}^2_{\mu_{h,\infty}}(\Omega)}^2 \leq 0.
             \end{equation}
         In particular,   $\mathcal{E}^n_h(S_{h,\tau}^n(\f{v},\f{w}))\leq \mathcal{E}^{n-1}_h(\f{v},\f{w})$ holds.
     \end{lemma}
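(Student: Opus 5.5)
We test the two equations of \eqref{eq:S_tau,hfinal} with the new fields and reduce the energy difference to the quantity controlled by \ref{ass:B5}. Write $(\f{E},\f{H}) := S_{h,\tau}^n(\f{v},\f{w}) = (\f{E}^{\f{v},\f{w}},\f{H}^{\f{v},\f{w}})$. Since $n\ge m+1$, we have $t_{n-1}\ge t_m \ge T_\mu$. Hence $\mu_h^n=\mu_h^{n-1}=\mu_{h,\infty}$ and $\mu_{t,h}^n=0$, so \eqref{eq:S_tau,hfinal} applies with $\f{f}_h^n=0$.

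First, we test the first equation of \eqref{eq:S_tau,hfinal} with $\f{v}_h=\tau\f{E}$. Since $\f{H}-\f{w}=-\tau\mu_{h,\infty}^{-1}\curl\,\f{E}$ lies in $\textbf{DG}_h$ and the relation holds pointwise, we multiply the second equation by $\tau\f{H}$ and integrate. Adding the two identities cancels the coupling terms $\mp\tau(\f{H},\curl\,\f{E})_{\f{L}^2(\Omega)}$ and gives
\begin{equation*}
(\varepsilon(t_n)(\f{E}-\f{v}),\f{E})_{\f{L}^2(\Omega)} + (\mu_{h,\infty}(\f{H}-\f{w}),\f{H})_{\f{L}^2(\Omega)} + \tau(\f{J}(\cdot,\f{E},t_n),\f{E})_{\f{L}^2(\Omega)} + \tau(\f{b}(\cdot,\boldsymbol{\nu}\times\f{E}),\boldsymbol{\nu}\times\f{E})_{\f{L}^2(\partial\Omega)} = 0.
\end{equation*}
The boundary term is nonnegative: $\f{b}$ is monotone with $\f{b}(\cdot,0)=0$ by \eqref{eq:alles0}, so it can be dropped.

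Second, we apply the polarization identity $(A(a-b),a)=\tfrac12(Aa,a)-\tfrac12(Ab,b)+\tfrac12(A(a-b),a-b)$ for symmetric $A$ in both terms. In the $\f{H}$ term this yields $\tfrac12\norm{\f{H}}^2_{\f{L}^2_{\mu_{h,\infty}}(\Omega)}-\tfrac12\norm{\f{w}}^2_{\f{L}^2_{\mu_{h,\infty}}(\Omega)}+\tfrac12\norm{\f{H}-\f{w}}^2_{\f{L}^2_{\mu_{h,\infty}}(\Omega)}$. Because $\mu_h^{n}=\mu_h^{n-1}=\mu_{h,\infty}$, this is exactly the magnetic part of $\mathcal{E}_h^n(\f{E},\f{H})-\mathcal{E}_h^{n-1}(\f{v},\f{w})$ plus the dissipative term appearing in \eqref{eq:energy-est}. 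In the electric term the same identity gives $\tfrac12(\varepsilon(t_n)\f{E},\f{E})-\tfrac12(\varepsilon(t_n)\f{v},\f{v})+\tfrac12(\varepsilon(t_n)(\f{E}-\f{v}),\f{E}-\f{v})$. Since the energy $\mathcal{E}_h^{n-1}$ uses $\varepsilon(t_{n-1})$, we rewrite $-\tfrac12(\varepsilon(t_n)\f{v},\f{v}) = -\tfrac12(\varepsilon(t_{n-1})\f{v},\f{v})-\tfrac12((\varepsilon(t_n)-\varepsilon(t_{n-1}))\f{v},\f{v})$.

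Collecting terms, the identity becomes
\begin{equation*}
\mathcal{E}_h^n(\f{E},\f{H})-\mathcal{E}_h^{n-1}(\f{v},\f{w}) + \tfrac12\norm{\f{H}-\f{w}}^2_{\f{L}^2_{\mu_{h,\infty}}(\Omega)} + X \le 0,
\end{equation*}
where
\begin{equation*}
X = \tfrac12(\varepsilon(t_n)(\f{E}-\f{v}),\f{E}-\f{v}) + \tau(\f{J}(\cdot,\f{E},t_n),\f{E}) - \tfrac12((\varepsilon(t_n)-\varepsilon(t_{n-1}))\f{v},\f{v}).
\end{equation*}
This $X$ is precisely the left-hand side of \ref{ass:B5} with $(\f{v}_h,\hat{\f{v}}_h)=(\f{E},\f{v})\in\textbf{ND}_h\times\textbf{ND}_h$ and $t_n\ge T_J$. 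Hence $X\ge\gamma\norm{\f{E}}^2_{\f{L}^2(\Omega)}$, which gives \eqref{eq:energy-est}. The \enquote{in particular} statement follows because the remaining terms are nonnegative.

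The one place requiring care is the bookkeeping of the time-varying $\varepsilon$. One must confirm that the leftover term from replacing $\varepsilon(t_n)$ by $\varepsilon(t_{n-1})$ in the old energy appears with exactly the sign and argument ($\hat{\f{v}}_h=\f{v}$) prescribed in \ref{ass:B5}. One must also confirm that no analogous remainder arises for $\mu$; this is why the restriction $n\ge m+1$ (rather than $n\ge m$) is needed, so that $\mu_h^{n-1}$ already equals $\mu_{h,\infty}$.
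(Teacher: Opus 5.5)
Your proposal is correct and follows essentially the same route as the paper. You test the first equation of \eqref{eq:S_tau,hfinal} with $\f{E}^{\f{v},\f{w}}$ and use the second equation to cancel the curl coupling. You then drop the boundary term by monotonicity of $\f{b}$ with $\f{b}(\cdot,0)=0$, apply the polarization identity with $A=\varepsilon(t_n),\mu_{h,\infty}$, and conclude via \ref{ass:B5} with $(\f{v}_h,\hat{\f{v}}_h)=(\f{E}^{\f{v},\f{w}},\f{v})$. Your explicit checks that $\mu_h^{n-1}=\mu_{h,\infty}$ (the reason for $n\ge m+1$) and of the sign of the $\varepsilon(t_n)-\varepsilon(t_{n-1})$ remainder spell out what the paper leaves implicit.
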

     \begin{proof}
         We insert $\f{v}_h = \f{E}^{\f{v},\f{w}}$ into the first equation of \eqref{eq:S_tau,hfinal} and use the second equation  to obtain
         \begin{align*}
        \left(\varepsilon(t_n)\frac{\f{E}^{\f{v},\f{w}} - \f{v}}{\tau},\f{E}^{\f{v},\f{w}}\right)_{\f{L}^2(\Omega)} & + \left(\mu_{h,\infty}\frac{\f{H}^{\f{v},\f{w}} - \f{w}}{\tau},\f{H}^{\f{v},\f{w}}\right)_{\f{L}^2(\Omega)}\\
             & +(\f{J}(\cdot,\f{E}^{\f{v},\f{w}},t_n),\f{E}^{\f{v},\f{w}})_{\f{L}^2(\Omega)}+(\f{b}(\cdot,\boldsymbol{\nu}\times \f{E}^{\f{v},\f{w}}),\boldsymbol{\nu}\times \f{E}^{\f{v},\f{w}})_{\f{L}^2(\partial \Omega)} = 0.
         \end{align*}
         Employing the monotonicity of $\f{b}$ and the identity
         $$A(a-b)\cdot a = \frac{1}{2}\left[ Aa\cdot a - Ab\cdot b + A(a-b)\cdot (a-b)\right],\quad a,b\in \mathbb{R}^3,A\in \mathbb{R}^{3\times 3}_{\text{sym}},$$
         with $A = \varepsilon(t_n),\mu_{h,\infty}$ yields
         \begin{align*}
             & \mathcal{E}_h^n(\f{E}^{\f{v},\f{w}},\f{H}^{\f{v},\f{w}})-\mathcal{E}_h^{n-1}(\f{v},\f{w}) +\frac{1}{2} (\varepsilon(t_n)(\f{E}^{\f{v},\f{w}} - \f{v}),\f{E}^{\f{v},\f{w}}-\f{v})_{\f{L}^2(\Omega)}+\frac{1}{2}(\mu_{h,\infty}(\f{H}^{\f{v},\f{w}} - \f{w}),\f{H}^{\f{v},\f{w}}-\f{w})_{\f{L}^2(\Omega)}\\
             & +\tau (\f{J}(\cdot,\f{E}^{\f{v},\f{w}},t_n),\f{E}^{\f{v},\f{w}})_{\f{L}^2(\Omega)} -  \frac{1}{2}((\varepsilon(t_n)-\varepsilon(t_{n-1}))\f{v},\f{v})_{\f{L}^2(\Omega)} \leq 0.
         \end{align*}
         The conclusion follows from Assumption \ref{ass:B5}.
     \end{proof}
     \begin{lemma}\label{lemma:S_tau,h-equiv}
         \normalfont Let $(\f{v},\f{w})\in \textbf{ND}_h \times \textbf{DG}_h$ and $n\geq m+1$. Then
         $$(\f{v},\f{w})\in \mathcal{N}_h\quad \Leftrightarrow\quad \mathcal{E}_h^n(S_{h,\tau}^n (\f{v},\f{w}))=\mathcal{E}_h^{n-1}(\f{v},\f{w})$$
         and either of these statements implies $S_{h,\tau}^n(\f{v},\f{w})=(\f{v},\f{w})$.
     \end{lemma}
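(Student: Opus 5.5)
Both implications come from the energy inequality of Lemma \ref{lemma:energy-est}, combined with uniqueness of $S_{h,\tau}^n$ from Assumption \ref{assumption2b} and the structure of \eqref{eq:S_tau,hfinal} for $n\ge m+1$.

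\emph{Direction ``$\Leftarrow$''.} Let $(\f{v},\f{w})\in\mathcal{N}_h$, so $\f{v}=0$ and $(\f{w},\curl\,\f{v}_h)_{\f{L}^2(\Omega)}=0$ for all $\f{v}_h\in\textbf{ND}_h$. We show that $(0,\f{w})$ itself solves \eqref{eq:S_tau,hfinal}.
\begin{itemize}
\item First equation: with $\f{E}^{\f{v},\f{w}}=0$ and $\f{H}^{\f{v},\f{w}}=\f{w}$, the time-difference term vanishes because $\f{v}=0$. The $\f{J}$ and $\f{b}$ terms vanish by \eqref{eq:alles0}. The curl term vanishes by the orthogonality defining $\mathcal{N}_h$.
\item Second equation: $\mu_{h,\infty}(\f{w}-\f{w})/\tau+\curl\,0=0$ holds trivially.
\end{itemize}
By uniqueness (Assumption \ref{assumption2b}), $S_{h,\tau}^n(0,\f{w})=(0,\f{w})$. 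Hence
\[
\mathcal{E}_h^n(0,\f{w})=\tfrac12(\mu_h^n\f{w},\f{w})_{\f{L}^2(\Omega)}=\tfrac12(\mu_h^{n-1}\f{w},\f{w})_{\f{L}^2(\Omega)}=\mathcal{E}_h^{n-1}(0,\f{w}),
\]
since $\mu_h^n=\mu_h^{n-1}=\mu_{h,\infty}$ for $n-1\ge m$ (because $t_{n-1}\ge T_\mu$). The $\varepsilon$-part is zero since $\f{v}=0$. This also gives the fixed-point claim for $\mathcal{N}_h$.

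\emph{Direction ``$\Rightarrow$''.} Assume the energies are equal. Then \eqref{eq:energy-est} yields
\[
\gamma\norm{\f{E}^{\f{v},\f{w}}}_{\f{L}^2(\Omega)}^2+\tfrac12\norm{\f{H}^{\f{v},\f{w}}-\f{w}}_{\f{L}^2_{\mu_{h,\infty}}(\Omega)}^2\le 0 .
\]
Since $\gamma>0$ and $\mu_{h,\infty}$ is uniformly positive definite, we get $\f{E}^{\f{v},\f{w}}=0$ and $\f{H}^{\f{v},\f{w}}=\f{w}$.

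Insert these into the first equation of \eqref{eq:S_tau,hfinal}, using \eqref{eq:alles0} to drop the $\f{J}$ and $\f{b}$ terms. This gives
\[
-\tfrac{1}{\tau}(\varepsilon(t_n)\f{v},\f{v}_h)_{\f{L}^2(\Omega)}-(\f{w},\curl\,\f{v}_h)_{\f{L}^2(\Omega)}=0\qquad\forall\,\f{v}_h\in\textbf{ND}_h. \tag{$*$}
\]

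\emph{Main obstacle: extracting $\f{v}=0$.} Testing $(*)$ with $\f{v}_h=\f{v}$ leaves the coupling term $(\f{w},\curl\,\f{v})$, so it does not directly give $\f{v}=0$. Instead, return to the inequality in the proof of Lemma \ref{lemma:energy-est} and apply \ref{ass:B5} with $\f{v}_h=\f{E}^{\f{v},\f{w}}=0$ and $\hat{\f{v}}_h=\f{v}$. Equality of energies forces the left-hand side of \ref{ass:B5} to be zero. But \ref{ass:B5} gives more than the $\gamma$-term: it bounds $\tfrac12(\varepsilon(t_n)\f{v},\f{v})-\tfrac12((\varepsilon(t_n)-\varepsilon(t_{n-1}))\f{v},\f{v})$ from below by $\gamma\norm{0}^2=0$, and this expression equals $\tfrac12(\varepsilon(t_{n-1})\f{v},\f{v})$.

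Hence the energy-dissipation identity with all terms retained reads
\[
0=\mathcal{E}^n_h-\mathcal{E}^{n-1}_h=-\tfrac12(\varepsilon(t_{n-1})\f{v},\f{v})_{\f{L}^2(\Omega)}+(\text{nonpositive }\f{b}\text{-term}).
\]
Recomputing directly with $\f{E}^{\f{v},\f{w}}=0$ and $\f{H}^{\f{v},\f{w}}=\f{w}$ confirms this: $\mathcal{E}^n_h(0,\f{w})-\mathcal{E}^{n-1}_h(\f{v},\f{w})=-\tfrac12(\varepsilon(t_{n-1})\f{v},\f{v})$. Equality of energies therefore gives $\f{v}=0$ by uniform positive definiteness of $\varepsilon(t_{n-1})$.

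With $\f{v}=0$, identity $(*)$ reduces to $(\f{w},\curl\,\f{v}_h)_{\f{L}^2(\Omega)}=0$ for all $\f{v}_h\in\textbf{ND}_h$, i.e.\ $(\f{v},\f{w})\in\mathcal{N}_h$. We have also shown $S_{h,\tau}^n(\f{v},\f{w})=(0,\f{w})=(\f{v},\f{w})$, which completes the fixed-point claim.
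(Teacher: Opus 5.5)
Your proof is correct and follows the paper's argument. Lemma \ref{lemma:energy-est} forces $\f{E}^{\f{v},\f{w}}=0$ and $\f{H}^{\f{v},\f{w}}=\f{w}$; the energy equality then reduces to $(\varepsilon(t_{n-1})\f{v},\f{v})_{\f{L}^2(\Omega)}=0$, and the first equation of \eqref{eq:S_tau,hfinal} gives the orthogonality. The converse is the direct check that $(0,\f{w})$ solves \eqref{eq:S_tau,hfinal}, with $\mu_h^n=\mu_h^{n-1}=\mu_{h,\infty}$.

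Your ``main obstacle'' is not actually an obstacle, so the detour through \ref{ass:B5} is unnecessary. The direct computation $\mathcal{E}_h^n(0,\f{w})-\mathcal{E}_h^{n-1}(\f{v},\f{w})=-\tfrac12(\varepsilon(t_{n-1})\f{v},\f{v})_{\f{L}^2(\Omega)}$, which you give at the end, is exactly the paper's step.
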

     \begin{proof}
         First assume that $\mathcal{E}_h^n(S_{h,\tau}^n (\f{v},\f{w}))=\mathcal{E}_h^{n-1}(\f{v},\f{w})$, i.e.
         \begin{equation}\label{eq:energyeq}
             (\varepsilon(t_n)\f{E}^{\f{v},\f{w}},\f{E}^{\f{v},\f{w}})_{\f{L}^2(\Omega)} + (\mu_{h,\infty} \f{H}^{\f{v},\f{w}},\f{H}^{\f{v},\f{w}})_{\f{L}^2(\Omega)} = (\varepsilon(t_{n-1})\f{v},\f{v})_{\f{L}^2(\Omega)} + (\mu_{h,\infty} \f{w},\f{w})_{\f{L}^2(\Omega)}.
         \end{equation}
    It follows from \eqref{eq:energy-est} that $\f{E}^{\f{v},\f{w}} = 0$ and $\f{H}^{\f{v},\f{w}} = \f{w}$. Therefore, \eqref{eq:energyeq} yields $\f{v} = 0$, and hence $S_{h,\tau}^n(\f{v},\f{w})=(\f{E}^{\f{v},\f{w}},\f{H}^{\f{v},\f{w}}) = (\f{v},\f{w})$. Then, by \eqref{eq:S_tau,hfinal}-\eqref{eq:alles0} and $\f{v} = \f{E}^{\f{v},\f{w}}=0$, we obtain
    $$(\f{H}^{\f{v},\f{w}},\curl\,\f{v}_h)_{\f{L}^2(\Omega)} = 0\quad \forall \f{v}_h\in \textbf{ND}_h$$
and thus $(\f{v},\f{w})\in \mathcal{N}_h$. On the other hand, if $(0,\f{w})\in \mathcal{N}_h$, then it follows again from \eqref{eq:S_tau,hfinal}-\eqref{eq:alles0}  that $S_{h,\tau}^n(0,\f{w})=(0,\f{w})$ and  $\mathcal{E}_h^n(0,\f{w})=\mathcal{E}_h^{n-1}(0,\f{w})$.
     \end{proof}

     \begin{lemma}\label{lemma:strictdecay}
         \normalfont Let $(\f{v},\f{w})\in  \mathcal W_h $ and $n\geq m+1$.  Then  $S_{h,\tau}^n(\f{v},\f{w})\in \mathcal W_h$, and $\mathcal{E}_h^n(S_{h,\tau}^n (\f{v},\f{w}))< \mathcal{E}_h^{n-1}(\f{v},\f{w})$ holds as long as $(\f{v},\f{w})\neq (0,0).$
     \end{lemma}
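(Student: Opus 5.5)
The lemma has two parts: invariance of $\mathcal W_h$ under $S_{h,\tau}^n$, and strict energy decay on $\mathcal W_h\setminus\{(0,0)\}$. I would handle them in that order; the second follows quickly from Lemma \ref{lemma:S_tau,h-equiv} once the first is settled.

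\emph{Invariance.} Let $(\f{v},\f{w})\in\mathcal W_h$ and $n\ge m+1$, and set $(\f{E}^{\f{v},\f{w}},\f{H}^{\f{v},\f{w}})\coloneqq S_{h,\tau}^n(\f{v},\f{w})$. The first component lies in $\textbf{ND}_h$ by definition, so only the magnetic component needs checking. Since $t_n\ge T_\mu$, the second equation in \eqref{eq:S_tau,hfinal} gives
$$\f{H}^{\f{v},\f{w}}=\f{w}-\tau\,\mu_{h,\infty}^{-1}\curl\,\f{E}^{\f{v},\f{w}} .$$
We have $\f{w}\in\mu_{h,\infty}^{-1}\curl\,\textbf{ND}_h$ by assumption, and $\mu_{h,\infty}^{-1}\curl\,\f{E}^{\f{v},\f{w}}$ belongs to the same linear space. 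Hence $\f{H}^{\f{v},\f{w}}\in\mu_{h,\infty}^{-1}\curl\,\textbf{ND}_h$, and therefore $S^n_{h,\tau}(\f{v},\f{w})\in\mathcal W_h$. One point to verify is that $\mu_{h,\infty}=\f{Q}_h\mu_\infty$ is elementwise constant and uniformly positive definite, by \eqref{ineq:munt} applied to $\mu_\infty$. This ensures that $\mu_{h,\infty}^{-1}\curl\,\textbf{ND}_h\subset\textbf{DG}_h$ is a well-defined linear subspace.

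\emph{Strict decay.} By Lemma \ref{lemma:energy-est}, $\mathcal{E}_h^n(S_{h,\tau}^n(\f{v},\f{w}))\le\mathcal{E}_h^{n-1}(\f{v},\f{w})$. Suppose equality held. Then Lemma \ref{lemma:S_tau,h-equiv} gives $(\f{v},\f{w})\in\mathcal{N}_h$. But $\mathcal N_h=\mathcal W_h^\perp$ with respect to the $\f{L}^2(\Omega)\times\f{L}^2_{\mu_{h,\infty}}(\Omega)$ inner product, so $\mathcal W_h\cap\mathcal N_h=\{(0,0)\}$. This forces $(\f{v},\f{w})=(0,0)$, contradicting the assumption, so the inequality is strict.

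\emph{Main obstacle.} The only delicate point is the orthogonality claim $\mathcal W_h\cap\mathcal N_h=\{0\}$. I would verify it directly. Take $(0,\f{w})$ with $\f{w}=\mu_{h,\infty}^{-1}\curl\,\f{u}_h$ for some $\f{u}_h\in\textbf{ND}_h$, and suppose also $(\f{w},\curl\,\f{v}_h)_{\f{L}^2(\Omega)}=0$ for all $\f{v}_h\in\textbf{ND}_h$. Testing with $\f{v}_h=\f{u}_h$ gives
$$(\mu_{h,\infty}^{-1}\curl\,\f{u}_h,\curl\,\f{u}_h)_{\f{L}^2(\Omega)}=0 ,$$
so $\curl\,\f{u}_h=0$ by positive definiteness of $\mu_{h,\infty}^{-1}$, and hence $\f{w}=0$. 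This identification of $\mathcal N_h$ as the orthogonal complement is exactly what underlies the definition \eqref{def:Wh}, so I expect no further difficulty.
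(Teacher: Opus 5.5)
Your proof is correct and takes the same route as the paper. Invariance of $\mathcal W_h$ comes from the second equation of \eqref{eq:S_tau,hfinal}, and strict decay follows from Lemma \ref{lemma:energy-est}, Lemma \ref{lemma:S_tau,h-equiv} and $\mathcal W_h\cap\mathcal N_h=\{(0,0)\}$; your direct check of that last fact fills in a step the paper leaves to $\mathcal N_h=\mathcal W_h^{\perp}$.
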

     \begin{proof}
     Since $\f{w}\in \mu_{h,\infty}^{-1}\curl\,\textbf{ND}_h$,   the second equation of \eqref{eq:S_tau,hfinal} implies  $\f{H}^{\f{v},\f{w}}\in \mu_{h,\infty}^{-1}\curl\,\textbf{ND}_h$, and   thus $S_{h,\tau}^n(\f{v},\f{w})\in \mathcal W_h$. The second claim is a   consequence of the previous two lemmas and   $\mathcal{N}_h = \mathcal W_h^{\perp}$.
     \end{proof}
     \begin{lemma}
    \normalfont Let $n\geq m + 1$. Then $S_{h,\tau}^n:  \textbf{ND}_h\times \textbf{DG}_h\to \textbf{ND}_h\times \textbf{DG}_h$ is continuous and 
    \begin{eqnarray}\label{eq:homogen}
    S_{h,\tau}^n(\lambda(\f{v},\f{w})) &=& \lambda S_{h,\tau}^n(\f{v},\f{w})\quad \forall (\f{v},\f{w})\in \textbf{ND}_h \times \textbf{DG}_h,\lambda \geq 0 \\ \label{eq:additive}
       S_{h,\tau}^n(\f{v},\f{w}+\f{w}') &=& S_{h,\tau}^n(\f{v},\f{w}) + (0,\f{w}')\quad \forall (\f{v},\f{w})\in \mathcal W_h,(0,\f{w}')\in \mathcal{N}_h.
    \end{eqnarray}
\end{lemma}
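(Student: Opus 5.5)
Throughout, fix $n\ge m+1$ and use the reduced form \eqref{eq:S_tau,hfinal} of $S_{h,\tau}^n$. For $n\ge m$ the nonlinearities are $\f{b}(\cdot,\cdot)$ and $\f{J}(\cdot,\cdot,t_n)$, both positively homogeneous by \ref{ass:B2}--\ref{ass:B3}, with $\f{f}_h^n=0$, $\mu_h^n=\mu_{h,\infty}$ and $\mu_{t,h}^n=0$. The two algebraic identities \eqref{eq:homogen} and \eqref{eq:additive} will follow from the uniqueness of the solution operator: we exhibit a candidate and check that it solves \eqref{eq:S_tau,hfinal}. Continuity needs a separate stability argument.

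\emph{Homogeneity.} For $\lambda>0$ let $(\f{E},\f{H})=S_{h,\tau}^n(\f{v},\f{w})$ and test the candidate $(\lambda\f{E},\lambda\f{H})$ against the data $(\lambda\f{v},\lambda\f{w})$. Every term in \eqref{eq:S_tau,hfinal} scales by $\lambda$: the linear terms trivially, and $\f{J}(\cdot,\lambda\f{E},t_n)=\lambda\f{J}(\cdot,\f{E},t_n)$ and $\f{b}(\cdot,\lambda\boldsymbol{\nu}\times\f{E})=\lambda\f{b}(\cdot,\boldsymbol{\nu}\times\f{E})$ by positive homogeneity. Uniqueness (Assumption \ref{assumption2b}) then gives \eqref{eq:homogen}. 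For $\lambda=0$ the data is $(0,0)$, and $(0,0)$ solves the system by \eqref{eq:alles0}, so $S_{h,\tau}^n(0,0)=(0,0)$.

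\emph{Additivity on $\mathcal N_h$.} Let $(\f{E},\f{H})=S_{h,\tau}^n(\f{v},\f{w})$ and $(0,\f{w}')\in\mathcal N_h$, and take the candidate $(\f{E},\f{H}+\f{w}')$ for the data $(\f{v},\f{w}+\f{w}')$. In the second equation the difference quotient $(\f{H}+\f{w}'-\f{w}-\f{w}')/\tau$ is unchanged, so that equation still holds. In the first equation the only new term is $-\int_\Omega\f{w}'\cdot\curl\,\f{v}_h\,dx$, which vanishes for every $\f{v}_h\in\textbf{ND}_h$ by the definition of $\mathcal N_h$ in \eqref{def:Wh}. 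Uniqueness gives \eqref{eq:additive}. This identity does not actually need $(\f{v},\f{w})\in\mathcal W_h$.

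\emph{Continuity.} This is the main step, since there is no Lipschitz bound uniform in $h$; because $h$ and $\tau$ are fixed, however, a finite-dimensional argument suffices. Take data $(\f{v}_k,\f{w}_k)\to(\f{v},\f{w})$ and let $(\f{E}_k,\f{H}_k)=S_{h,\tau}^n(\f{v}_k,\f{w}_k)$.
\begin{enumerate}
\item \emph{Boundedness.} By Lemma \ref{lemma:energy-est}, $\mathcal E_h^n(\f{E}_k,\f{H}_k)\le\mathcal E_h^{n-1}(\f{v}_k,\f{w}_k)$, and the right-hand side is bounded. So $(\f{E}_k,\f{H}_k)$ is bounded in the finite-dimensional space $\textbf{ND}_h\times\textbf{DG}_h$, where all norms are equivalent.
\item \emph{Compactness.} Every subsequence therefore has a further subsequence converging to some $(\f{E},\f{H})$.
\item \emph{Passing to the limit.} We pass to the limit in \eqref{eq:S_tau,hfinal} using the continuity of the Nemytskii maps on the finite-dimensional space. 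For $\f{J}$ this follows from the Lipschitz estimate \eqref{eq:J_tot-estimate}. For $\f{b}$ it follows from continuity in the second argument together with the linear growth \eqref{con:b} and dominated convergence; on $\textbf{ND}_h$ the traces converge in $\f L^2(\partial\Omega)$, and a further subsequence converges a.e.
\item \emph{Identification.} The limit $(\f{E},\f{H})$ solves \eqref{eq:S_tau,hfinal} with data $(\f{v},\f{w})$, so uniqueness gives $(\f{E},\f{H})=S_{h,\tau}^n(\f{v},\f{w})$.
\end{enumerate}
The subsequence principle then yields convergence of the whole sequence, and hence continuity of $S_{h,\tau}^n$.

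Alternatively, one could obtain a direct Lipschitz-type estimate by subtracting the two systems and testing with the differences. That route would require monotonicity of $\f{J}$ in $\xi$, which is not assumed, so the compactness argument above is the safer choice.
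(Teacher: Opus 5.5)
Your proof is correct and follows the paper's argument essentially step for step. Homogeneity comes from scaling the reduced system \eqref{eq:S_tau,hfinal} and invoking uniqueness. Additivity holds because $(\f{w}',\curl\,\f{v}_h)_{\f{L}^2(\Omega)}=0$ removes the only new term. Continuity follows from the energy bound of Lemma~\ref{lemma:energy-est}, finite-dimensional compactness, passage to the limit in the $\f{J}$- and $\f{b}$-terms, and identification of the limit by uniqueness.

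Your extra details (the case $\lambda=0$, the subsequence principle, the dominated-convergence justification for $\f{b}$) make explicit what the paper leaves implicit. Your closing aside is slightly overstated, though: the Lipschitz bound on $\f{J}$ would already give a direct Lipschitz estimate under a step-size restriction such as $\tau L<\underline{\varepsilon}$. It just would not do so unconditionally.
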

\begin{proof}
    Let $(\f{v},\f{w})\in \textbf{ND}_h\times \textbf{DG}_h$ be fixed and suppose that 
    $$\{(\f{v}_k,\f{w}_k)\}_{k=1}^\infty\subset \textbf{ND}_h\times \textbf{DG}_h \quad \text{with}\quad (\f{v}_k,\f{w}_k)\to (\f{v},\f{w})\text{ in } \textbf{ND}_h\times \textbf{DG}_h$$
    is given. For all $k \in \mathbb N$, we  set $(\f{E}_k,\f{H}_k)\coloneqq S_{h,\tau}^n(\f{v}_k,\f{w}_k)$. Due to \eqref{eq:energy-est}, $\{(\f{E}_k,\f{H}_k)\}_{k=1}^\infty$ is bounded in $\f{L}^2(\Omega)^2$. Since $\textbf{ND}_h\times \textbf{DG}_h$ is finite-dimensional, we can extract a subsequence $\{(\f{E}_{k_l},\f{H}_{k_l})\}_{l=1}^\infty\subset \{(\f{E}_k,\f{H}_k)\}_{k=1}^\infty$ such that 
    $$(\f{E}_{k_l},\f{H}_{k_l}) \to (\f{E},\f{H})\text{ in } \f{V}\times \f{L}^2(\Omega)\quad \text{for some } (\f{E},\f{H})\in \textbf{ND}_h\times \textbf{DG}_h,$$
    which, due to \ref{ass:B2} and \ref{ass:B3}, implies $\f{J}(\cdot,\f{E}_{k_l},  t_n) \to \f{J}(\cdot,\f{E},  t_n)$ in $\f{L}^2(\Omega)$ and $ \f{b}(\cdot,\boldsymbol{\nu}\times\f{E}_{k_l})  \to \f{b}(\cdot,\boldsymbol{\nu}\times\f{E})$ in $\f{L}^2(\partial \Omega)$. In conclusion,   we    can pass to the limit in  \eqref{eq:S_tau,hfinal} (with $(\f{v},\f{w})$ replaced by $(\f{v}_{k_l},\f{w}_{k_l})$) to obtain   $(\f{E},\f{H})=S_{h,\tau}^n(\f{v},\f{w})$. This implies the continuity of $S^n_{h,\tau}$.

    The property \eqref{eq:homogen}  is immediately obtained by multiplying the first and second equations in \eqref{eq:S_tau,hfinal} with $\lambda \ge 0$ and invoking the positive homogeneity of $\f b$ and $\f J$ with respect to the second variable. To show that \eqref{eq:additive} is valid, let $(\f{v},\f{w})\in \mathcal W_h$,  $(0,\f{w}')\in \mathcal{N}_h$,    and $(\f{E}^{\f{v},\f{w}},\f{H}^{\f{v},\f{w}}):=S_{h,\tau}^n(\f{v},\f{w})$.   According to \eqref{def:Wh},  $(\f{w}',\curl \,\f{v}_h)_{\f{L}^2(\Omega)} =0$ holds for all $\f{v}_h \in \textbf{ND}_h$. Thus,  \eqref{eq:S_tau,hfinal} implies   
     \begin{equation*} 
\begin{dcases}
\int_\Omega \varepsilon(t_n) \frac{\f{E}^{\f{v},\f{w}} - \f{v}}{\tau}\cdot \f{v}_h - (\f{H}^{\f{v},\f{w}}+ \f{w}')\cdot \curl\,\f{v}_h +  \f{J}(\cdot,\f{E}^{\f{v},\f{w}}, t_n)\cdot \f{v}_h \,dx\\
+ \int_{\partial \Omega} \f{b}(\cdot,\boldsymbol{\nu}\times \f{E}^{\f{v},\f{w}})\cdot \boldsymbol{\nu}\times \f{v}_h\,dx = 0 \quad \forall \f{v}_h\in \textbf{ND}_h, \\
\mu_{h,\infty} \frac{\f{H}^{\f{v},\f{w}} +\f{w}' - (\f{w} + \f{w}')}{\tau} + \curl\,\f{E}^{\f{v},\f{w}}  = 0,
\end{dcases}
\end{equation*}
and so, again due to \eqref{eq:S_tau,hfinal}, it follows that  $ S_{h,\tau}^n(\f{v},\f{w}+\f{w}')=(\f{E}^{\f{v},\f{w}},\f{H}^{\f{v},\f{w}}+ \f{w}') = S_{h,\tau}^n(\f{v},\f{w}) + (0,\f{w}')$.   
\end{proof}
     \begin{proof}[Proof of Theorem \ref{theorem-exp-decay}]
         We divide the proof into three steps:

         Step 1. For every $n\geq m + 1$, we claim that 
        \begin{equation}\label{eq:q_n}
            q_n\coloneqq \max_{(\f{v},\f{w})\in \mathcal W_h\setminus \{(0,0)\}}\frac{\mathcal{E}_h^n(S_{h,\tau}^n(\f{v},\f{w}))}{\mathcal{E}_h^{n-1}(\f{v},\f{w})}\in (0,1).
        \end{equation}
        To prove this, note first that $B^{n-1}_h\coloneqq \{(\f{v},\f{w})\in \mathcal W_h : \mathcal{E}_h^{n-1}(\f{v},\f{w}) = 1\}$ is compact and $\mathcal{E}_h^n\circ S_{h,\tau}^n$ is continuous. Therefore, by the Weierstrass theorem,   
         $$
         \exists (\f{v}^*,\f{w}^*)\in B_h^{n-1}:  \quad \mathcal{E}_h^n(S_{h,\tau}^n(\f{v}^*,\f{w}^*)) = \max_{({\f{v}},{\f{w}})\in B_h^{n-1}}\,\mathcal{E}_h^n(S_{h,\tau}^n({\f{v}},{\f{w}})).$$
          Since $(\f{v}^*,\f{w}^*)\neq (0,0)$,  \eqref{eq:S_tau,hfinal} and \eqref{eq:alles0} yield that    $S_{h,\tau}^n(\f{v}^*,\f{w}^*)\neq (0,0)$ and hence $\mathcal{E}_h^n(S_{h,\tau}^n(\f{v}^*,\f{w}^*))>0$. Thus, along with Lemma \ref{lemma:strictdecay}, it follows that    $$
     0< \mathcal{E}_h^n(S_{h,\tau}^n(\f{v}^*,\f{w}^*))  < \mathcal{E}_h^{n-1}(\f{v}^*,\f{w}^*) = 1.  $$ Finally, 
         since 
         ${\mathcal{E}_h^{n-1}(\f{v},\f{w})} >0$ holds for all $(\f{v},\f{w}) \in \mathcal W_h \setminus \{(0,0)\},$
         the positive homogeneity of   $S_{h,\tau}^n$ (see  \eqref{eq:homogen}) and the quadratic homogeneity of $\mathcal{E}_h^n$  and $\mathcal{E}_h^{n-1}$ imply that $$ 
          \max_{(\f{v},\f{w})\in \mathcal W_h\setminus \{(0,0)\}}\frac{\mathcal{E}_h^n(S_{h,\tau}^n(\f{v},\f{w}))}{\mathcal{E}_h^{n-1}(\f{v},\f{w})} =\max_{({\f{v}},{\f{w}})\in B_h^{n-1}}\,\mathcal{E}_h^n(S_{h,\tau}^n({\f{v}},{\f{w}}))=\mathcal{E}_h^n(S_{h,\tau}^n(\f{v}^*,\f{w}^*))\in (0,1).
         $$
Step 2. We show that
         \begin{equation}\label{eq:sup q_n}
             q_{h,\tau}\coloneqq \sup_{n\geq m+1} q_n \in (0,1).
         \end{equation}
 Suppose, to the contrary, that $q_{h,\tau}=1$. Then there exist sequences $n_k \geq m +1, n_k\to \infty$, and $\{(\f{v}_k,\f{w}_k)\}_{k=1}^\infty\subset \mathcal W_h$ such that
\begin{equation}\label{eq:energyconv}
         \mathcal{E}_h^{n_k - 1}(\f{v}_k,\f{w}_k) = 1,\quad \mathcal{E}_h^{n_k}(S_{h,\tau}^{n_k}(\f{v}_k,\f{w}_k)) = q_{n_k}\quad\forall k\in \mathbb{N},\quad \lim_{k\to \infty} q_{n_k} = 1.    
         \end{equation}
Acorrding to Lemma \ref{lemma:strictdecay}, 
         \begin{equation}\label{eq:E_kH_k}
             (\f{E}_k,\f{H}_k)\coloneqq S_{h,\tau}^{n_k}(\f{v}_k,\f{w}_k) \in \mathcal W_h \quad \forall k \in \mathbb N.
         \end{equation}
         Since $\varepsilon, \mu_{h,\infty}$ are uniformly positive definite, \eqref{eq:energyconv} and \eqref{eq:E_kH_k} imply that $\{(\f{v}_k,\f{w}_k)\}_{k=1}^\infty,\{(\f{E}_k,\f{H}_k))\}_{k=1}^\infty\subset \mathcal W_h$ are bounded. As $\mathcal W_h$ is finite-dimensional, after selecting subsequences if necessary, it follows that
         \begin{equation}\label{eq:v_kw_k}
             (\f{v}_k,\f{w}_k)\to (\f{E}^-,\f{H}^-) \quad \text{in } \mathcal W_h\quad \textrm{and} \quad  (\f{E}_k,\f{H}_k) \to (\f{E}, \f{H})\quad \text{in } \mathcal W_h
         \end{equation}
         for some $(\f{E}^-,\f{H}^-),(\f{E},\f{H})\in \mathcal W_h$. Taking into account \eqref{eq:energy-est} and \eqref{eq:energyconv}, we have 
         \begin{equation}\label{eq:E+H+}
            \f{E} = 0,\quad \f{H}^-=\f{H}.
         \end{equation}
         Therefore, \ref{ass:B1} implies
        $\lim_{k\to\infty} (\varepsilon(t_{n_k})\f{E}_k,\f{E}_k)_{\f{L}^2(\Omega)}= 0,$ 
         which together with \eqref{eq:energyconv} leads to
         \begin{equation}\label{eq:H=1}
            \frac{1}{2}(\mu_{h,\infty}\f{H},\f{H})=1. 
         \end{equation}
         Furthermore, we have
         \begin{align*}
             \frac{\underline{\varepsilon}}{2}\norm{\f{v}_k}_{\f{L}^2(\Omega)}^2 \leq \frac{1}{2}(\varepsilon(t_{n_k - 1})\f{v}_k,\f{v}_k)& =\mathcal{E}_h^{n_k - 1}(\f{v}_k,\f{w}_k) - \mathcal{E}_h^{n_k}(\f{E}_k,\f{H}_k) + \frac{1}{2}\norm{\f{E}_k}_{\f{L}^2_{\varepsilon(t_{n_k})}(\Omega)}^2\\
             & - \frac{1}{2}\norm{\f{w}_k}_{\f{L}^2_{\mu_{h,\infty}}(\Omega)}^2+ \frac{1}{2}\norm{\f{H}_k}_{\f{L}^2_{\mu_{h,\infty}}(\Omega)}^2.
             \end{align*}
        Since the right-hand side of this inequality converges to 0 by \eqref{eq:energyconv}-\eqref{eq:H=1}, it follows that $\f{E}^- = 0$, and hence $(0,\f{H})= (\f{E}^-,\f{H}^-) \in \mathcal W_h$. On the other hand, according to \eqref{eq:E_kH_k} and \eqref{eq:S_tau,hfinal}, it holds for all $k\in \mathbb{N}$ that
        \begin{equation}\label{eq:defsolop}
        (\f{H}_k,\curl\,\f{v}_h)_{\f{L}^2(\Omega)} = \left(\varepsilon(t_{n_k})\frac{\f{E}_k - \f{v}_k}{\tau}+\f{J}(\cdot,\f{E}_k,t_{n_k}),\f{v}_h\right)_{\f{L}^2(\Omega)}+(\f{b}(\cdot,\boldsymbol{\nu}\times \f{E}_k),\boldsymbol{\nu}\times \f{v}_h)_{\f{L}^2(\partial \Omega)}.
        \end{equation}
        Due to \eqref{eq:v_kw_k} and the fact that $\norm{\cdot}_{\f{V}}$ is an equivalent norm on the finite-dimensional space $\textbf{ND}_h$, we have 
        $$\lim_{k\to \infty} \norm{\f{E}_k}_{\f{V}}=\lim_{k\to \infty} \norm{\f{v}_k}_{\f{L}^2(\Omega)}=0.$$
         Moreover, by \ref{ass:B2} and \eqref{con:b}, it holds for all $k \in \mathbb N$ that
        $$\norm{\f{J}(\cdot,\f{E}_k,t_{n_k})}_{\f{L}^2(\Omega)}\leq (L_{tot} + \norm{\partial_t \varepsilon}_{L^\infty(0,\infty;L^\infty(\Omega)^{3\times 3}})\norm{\f{E}_k}_{\f{L}^2(\Omega)},\quad \norm{\f{b}(\cdot,\boldsymbol{\nu}\times \f{E}_k)}_{\f{L}^2(\partial \Omega)} \leq \overline b \norm{\boldsymbol{\nu}\times \f{E}_k}_{\f{L}^2(\partial \Omega)}.$$
        Therefore, passing to the limit $k \to \infty$ in \eqref{eq:defsolop}   yields
         $$(\f{H},\curl\,\f{v}_h)_{\f{L}^2(\Omega)} = 0\quad \forall \f{v}_h\in \textbf{ND}_h,$$
         and consequently $(0,\f{H})\in \mathcal{N}_h$. As $(0,\f{H})\in \mathcal W_h$, it must be that $\f{H}=0$, which contradicts \eqref{eq:H=1}. 

Step 3. Setting
    $$  T^n_{h,\tau} \coloneqq S_{h,\tau}^n\circ S_{h,\tau}^{n-1}\circ \cdots\circ S_{h,\tau}^{m + 1} \quad \forall n\geq m + 1,$$ 
 it holds in view of \eqref{eq:deftheo} that 
\begin{equation} \label{eq:deftheoinproof}
(\f{E}_h^n, \f{H}_h^n) = T^n_{h,\tau}(\f{E}_h^{m}, \f{H}_h^{m}) \quad  \quad \forall n\geq m + 1.
\end{equation}
Now, we recall from the decomposition \eqref{eq:decomptheo} that 
\begin{equation} \label{eq:decomptheoinproof}
(\f{E}_h^{m},\f{H}_h^{m}) = (\f{E}_{h}^*, \f{H}_{h}^*)+(0,\f{H}_{h,\infty})\in \mathcal W_h\oplus \mathcal{N}_h.
\end{equation}
Since $(\f{E}_{h}^*, \f{H}_{h}^*)\in \mathcal W_h$, Lemma \ref{lemma:strictdecay} implies that $T^n_{h,\tau}(\f{E}_{h}^*, \f{H}_{h}^*)\in \mathcal W_h$ holds for all $n\geq m + 1$. For this reason,   we can iteratively apply \eqref{eq:q_n} and \eqref{eq:sup q_n} to obtain
    \begin{equation}\label{eq:exp-est}
    \begin{split} \mathcal{E}^n_h(T^n_{h,\tau}(\f{E}_{h}^*, \f{H}_{h}^*)) & \leq q_{h,\tau} \mathcal{E}_h^{n-1}(T^{n-1}_{h,\tau}(\f{E}_{h}^*, \f{H}_{h}^*))\leq \cdots \leq q_{h,\tau}^{n-m}\mathcal{E}_h^{m}(\f{E}_{h}^*, \f{H}_{h}^*)\\
    & = e^{-c_{h,\tau} (t_n-t_{m})} \mathcal{E}_h^{m}(\f{E}_{h}^*, \f{H}_{h}^*)\quad \forall n\geq m + 1
    \end{split} 
    \end{equation}
with $c_{h,\tau} := -\frac{1}{\tau}\ln(q_{h,\tau}) \in (0,\infty)$ where we have used $q_{h,\tau} \in (0,1)$. Finally,   \eqref{eq:deftheoinproof}, \eqref{eq:decomptheoinproof}, \eqref{eq:additive}, and Lemma \ref{lemma:S_tau,h-equiv} imply that    
$$
(\f{E}_h^n, \f{H}_h^n) = T^n_{h,\tau}(\f{E}_h^*,\f{H}_h^*) + (0,\f{H}_{h,\infty}) \quad  \forall n\geq m + 1.
$$
Thus, by  the orthogonality of $T^n_{h,\tau}(\f{E}_{h}^*, \f{H}_{h}^*)\in \mathcal W_h$ and $(0,\f{H}_{h,\infty})\in \mathcal{N}_h$, it follows  for all $n\geq m + 1$ that 
    \begin{equation}\label{eq:exp-est2}
    \mathcal{E}_h^n(\f{E}_h^n,\f{H}_h^n)  = \mathcal{E}_h^n(T^n_{h,\tau}(\f{E}_{h}^*, \f{H}_{h}^*)) + \mathcal{E}_h^{n}((0,\f{H}_{h,\infty}))
     = \mathcal{E}_h^n(T^n_{h,\tau}(\f{E}_{h}^*, \f{H}_{h}^*)) + \mathcal{E}_h^{m}(0,\f{H}_{h,\infty}),
    \end{equation}
     where  Lemma \ref{lemma:S_tau,h-equiv} was used for the last equality.  The claim \eqref{eq:expoest} now follows from \eqref{eq:exp-est} and \eqref{eq:exp-est2}.
\end{proof}
 \end{subsection}

To close this section, we present an example illustrating that the exponential decay result in general does not hold if \ref{ass:B5} is not satisfied. To this end, we first fix a function $\phi_h \in P_1^h\cap H_0^1(\Omega)$ with $\|\nabla \phi_h\|_{\f{L}^2(\Omega)} \neq 0$, where $P_1^h$ denotes the space of continuous piecewise linear functions associated with $\mathcal{T}_h$. Then,   we set 
$$(\f{E}_h^0,\f{H}_h^0)= (\nabla \phi_h,0)\in \mathcal W_h,\quad \f{f} =  0,  \quad \varepsilon(t)=(2-e^{-t})I, \quad  \mu = I,
$$
 where $I \in \mathbb{R}^{3\times3}$ denotes the identity matrix. Furthermore, $\f{b}$ can be arbitrarily chosen in accordance with Assumption \ref{assumption2}.

Now, if $\f{J}_{tot} = 0$, then by testing \ref{ass:B5} with $\f{v}_h = \hat{\f{v}}_h$, it is easy to see that there exists no $\gamma > 0$ such that \ref{ass:B5} holds. On the other hand, if $\f{J}_{tot} = 2\f{E}$, then, according to Remark \ref{rem:(B5)} (with  $c_J = 2$, $L_\epsilon=1$, $\underline \varepsilon =1$),  \ref{ass:B5}  is satisfied as long as $\gamma = \tau\leq \frac{1}{2}$. Furthermore, it follows from a simple induction argument in \eqref{eq:S_tau,hfinal} along with $\curl\,\nabla \phi_h = 0$ and $\boldsymbol \nu \times \nabla \phi_h=0$ that $\{(\f{E}_h^n,\f{H}_h^n)\}_n$ is given explicitly by $(\f{E}_h^n,\f{H}_h^n)=(\alpha_n \nabla {\phi}_h,0),n\in \mathbb{N}_0,$
where $\alpha_0 = 1$ and 
    \begin{equation}\label{eq:alpha_n formula}
        \alpha_n = \left(\prod_{j=1}^n r_j\right) \alpha_0,\quad r_j = \begin{cases}
    \frac{1}{1+\tau\frac{e^{-t_j}}{2-e^{-t_j}}}\quad \text{if } \f{J}_{tot} = 0,\\
    \frac{1}{1+\tau\frac{2+e^{-t_j}}{2-e^{-t_j}}}\quad \text{if } \f{J}_{tot} = 2\f{E}.
\end{cases}
    \end{equation}
In the case $\f{J}_{tot}=0$, it follows from
$$\sum_{j=1}^\infty \tau \frac{e^{-t_j}}{2-e^{-t_j}}<\infty$$ 
that $\{\alpha_n\}_{n=1}^\infty$ converges to some real number $\alpha_\infty >0$. Consequently, 
$$\mathcal{E}_h^n({\f{E}}_h^n,{\f{H}}_h^n) = \frac{1}{2}(2-e^{-t_n})\alpha_n^2 \norm{\nabla \phi_h}_{\f{L}^2(\Omega)^2}\to \alpha_\infty^2\norm{\nabla \phi_h}_{\f{L}^2(\Omega)}^2>0,$$
which shows that $\mathcal{E}_h^n$ is not decaying exponentially. 
In the second case $\f{J}_{tot} = 2\f{E}$, we have $r_n \leq (1+\tau)^{-1}$ and thus $\alpha_n \leq (1+\tau)^{-n}$, which leads to
$$\mathcal{E}_h^n({\f{E}}_h^n,{\f{H}}_h^n) = \frac{1}{2}(2-e^{-t_n})\alpha_n^2 \norm{\nabla \phi_h}_{\f{L}^2(\Omega)}^2 \leq C(1+\tau)^{-2n} \quad \forall n\in \mathbb{N},$$
for some   $C>0$ independent of $n$. Consequently, the electromagnetic energy is decaying exponentially.
 
 \section{Numerical results}\label{section:numericalresults}
     We present two numerical tests illustrating the convergence and exponential stability results from Sections \ref{section: convergence} and \ref{section:exponentialstability}. Here, the numerical computation of the finite element scheme \eqref{time-space-discrete} is realized based on the fixed-point iterative algorithm presented in the first part of the proof of Proposition \ref{theorem:existence}. This was carried out in Legacy FEniCS (2019.1.0) via an iterative nonlinear solver based on Newton's method. 
     For our numerical test below, we set  $\Omega = (-1,1)^3$, $\varepsilon(t)=(1-\frac{1}{4}\sin(t))I$, and
     $$\f{b}(\xi) =  
{\footnotesize \begin{pmatrix}
2.0 & 0.3 & 0.2\\
0.3 & 1.4 & 0.25\\
0.2 & 0.25 & 0.9
\end{pmatrix}}
\xi + \alpha_1 q(\xi), \quad \f{J}_{tot,\chi}(x,\xi,\eta,t) = \alpha_2 \xi + \alpha_3 q(\xi) + \chi(t)\frac{\eta}{1+\lvert\eta\rvert^2},$$
with a  Lipschitz continuous, monotone and homogeneous function $q: \mathbb R^3 \to \mathbb R^3, q(\xi) = \frac{1}{2}\nabla\norm{\xi}_{l^4}^2$.  Moreover, the constants $\alpha_1,\alpha_2,\alpha_3 \in \mathbb R$  and  the Lipschitz continuous function $\chi: \mathbb R \to \mathbb R$  are specified below.

\subsection{Convergence test} The goal of our first test is to quantify the convergence behaviour of $(\f{E}_{N,h},\f{H}_{N,h})$ using an exact solution $(\f{E},\f{H})$ of $\eqref{eq:maxwell intro full}$. To this end, we set $T=1.75, (\alpha_1,\alpha_2,\alpha_3)=(0,1,0), \chi\equiv 1, \mu(t)=(1+t)I,$ and
$\Psi(x_1,x_2,x_3)\coloneqq g(x_1)(1-x_2^2)^2(1-x_3^2)^2 v$
with
$$g(x_1)=(1+x_1)^2[1+(\lambda - 1)(x_1 - 1)],\quad \lambda = \frac{23-5\sqrt{2}}{20},\quad v=(0,1,\sqrt{2}-1).$$
One readily checks that 
$ \f{E}(x,t)\coloneqq \frac{\Psi(x)}{(1+t)^2}$ and $ \f{H}(x,t)\coloneqq \frac{\curl\,\Psi(x)}{(1+t)^2}$
form a solution to \eqref{eq:maxwell intro full} provided that
$$(\f{E}_0,\f{H}_0)\coloneqq (\Psi,\curl\,\Psi),\quad \f{f}(x,t)\coloneqq \partial_t (\varepsilon\f{E})(x,t) - \curl\,\f{H}(x,t)+\f{J}_{tot,\chi}(x,\f{E}(x,t),\f{H}(x,t),t).$$
Straightforward computations confirm that Assumption \ref{ass} is fulfilled for our first test.  Table 1 comprises our numerical results and displays the error quantities
\begin{equation*}
    e_{N,h}^n \coloneqq
    \left(
        \|\f{E}_h^n-\f{E}(t_n)\|_{L^2(\Omega)}^2
        +
        \|\f{H}_h^n-\f{H}(t_n)\|_{L^2(\Omega)}^2
    \right)^{1/2},\quad  e^{\max}_{N,h} \coloneqq \max_{1\leq n\leq N} e_{N,h}^n, \quad e^{\mathrm{time}}_{N,h} \coloneqq
    \left(\tau\sum_{n=1}^{N} (e^n_{N,h})^2\right)^{1/2}
\end{equation*}
 for varying and proportionally chosen values of $N$ and $h$ with $\mathrm{DoFs}=\mathrm{DoFs}(\mathbf{ND}_h)
        +\mathrm{DoFs}(\mathbf{DG}_h)$ denoting the total
        degrees of freedom. Furthermore, for consecutive numerical tests with corresponding parameters $N,h$ and $N',h'$ satisfying $N<N'$ and $h>h',$ we also have calculated the associated experimental order of convergence 
$\text{EOC}\coloneqq \frac{\log(e/e')}{\log(h/h')}.$
Here, either $e=e_{N,h}^\text{max}$ and $e'=e_{N',h'}^\text{max}$ or $e=e_{N,h}^\text{time}$ and $e'=e_{N',h'}^\text{time}$.    Table 1  confirms a convergence behavior of the numerical solution that is in agreement with Theorem \ref{theorem:convergence}. Moreover, as can be seen by Table 1, the experimental order of convergence approaches 1 as $h$ (resp. $N$) gets smaller (resp. larger). Together with Figure 1, this strongly indicates a first-order convergence rate.

\begin{table}[H]
    \centering
    \captionsetup{
        position=bottom,
        justification=centering,
        singlelinecheck=false
    }

        \begin{minipage}[t]{0.70\textwidth}
        \vspace{0pt}
        \caption{First test.}
        \label{tab:example1-convergence}
    \end{minipage}
    \hfill
    \begin{minipage}[t]{0.28\textwidth}
        \vspace{0pt}
        \caption{Second test.}
        \label{tab:example2-energy}
    \end{minipage}
    \begin{minipage}[t]{0.7\textwidth}
        \vspace{0pt}
        \centering
        \small
        \setlength{\tabcolsep}{4pt}

        \resizebox{0.9\linewidth}{!}{%
            \begin{tabular}{@{}rrrrrrr@{}}
                \toprule
                $N$ & $h$ & DoFs
                & $e_{\max}$ & $\mathrm{EOC}_{\max}$
                & $e_{\mathrm{time}}$ & $\mathrm{EOC}_{\mathrm{time}}$ \\
                \midrule
                280 & 0.34641 & 25\,930
                & 0.77215 & \noeoc
                & 0.45981 & \noeoc \\
                420 & 0.23094 & 86\,445
                & 0.52103 & 0.970
                & 0.30917 & 0.979 \\
                560 & 0.17321 & 203\,660
                & 0.39262 & 0.984
                & 0.23262 & 0.989 \\
                700 & 0.13856 & 396\,325
                & 0.31483 & 0.989
                & 0.18638 & 0.993 \\
                840 & 0.11547 & 683\,190
                & 0.26272 & 0.993
                & 0.15546 & 0.995 \\
                980 & 0.09897 & 1\,083\,005
                & 0.22538 & 0.994
                & 0.13332 & 0.996 \\
                \bottomrule
            \end{tabular}%
        }
    \end{minipage}
    \hfill
    \begin{minipage}[t]{0.28\textwidth}
        \vspace{0pt}
        \centering
        \small
        \setlength{\tabcolsep}{12pt}

        \resizebox{\linewidth}{!}{%
            \begin{tabular}{@{}rrr@{}}
                \toprule
                $n$ & $t_n$ & $\mathcal{E}_h^n({\f{E}}_h^n,{\f{H}}_h^n)$ \\
                \midrule
                 100 &  0.2 & $2.01714$               \\
                 500 &  1.0 & $5.61065\cdot 10^{-1}$  \\
                1000 &  2.0 & $3.22100\cdot 10^{-2}$  \\
                2000 &  4.0 & $4.09324\cdot 10^{-4}$  \\
                3000 &  6.0 & $1.42239\cdot 10^{-5}$  \\
                4000 &  8.0 & $5.73890\cdot 10^{-7}$  \\
                5000 & 10.0 & $5.81251\cdot 10^{-8}$  \\
                6000 & 12.0 & $7.77206\cdot 10^{-9}$  \\
                \bottomrule
            \end{tabular}%
        }
    \end{minipage}
\end{table}
\subsection{Exponential decay test} In the second example, we fix $\tau=0.002, h\approx 0.099,$ and set $\mu\equiv I, (\alpha_1,\alpha_2,\alpha_3)=(1,0,1),\f{E}_0=\f{H}_0=0$ and $\chi(t)=\max\{0, 1-5t\}$. We also apply the circular current 
    \begin{equation*}\f{f}:\Omega \times [0,\infty) \to \mathbb{R}^3, \quad 
    \f{f}((x,y,z),t)\coloneqq 100\max\{0,1-10t\}\cdot
   \left(0, \frac{-z}{(y^2 +z^2)^{1/2}}, \frac{y}{(y^2 +z^2)^{1/2}}\right)\chi_{\Omega_p}
     \end{equation*}
     to the cylindrical pipe coil
    $\Omega_p\coloneqq \{(x,y,z)\in \mathbb{R}^3:0\leq x\leq 1/2\text{ and } 3/10 \leq (y^2 +z^2)^{1/2}\leq 1/2\}.$
   In this setting,  Assumption \ref{assumption2} satisfied with $t_{m} = 0.2$.  To check   \ref{ass:B5}, we simply verify Remark \ref{rem:(B5)}   with $c_J = \tfrac{1}{\sqrt{3}}-\frac{1}{4},L_\varepsilon = \tfrac{1}{4}$. The computed energy values are summarized in Table 2 and Figure 2. In conclusion, our numerical results confirm an exponential decay behavior that   is consistent with Theorem \ref{theorem-exp-decay}.

\begin{figure}[H]
    \centering
    \begin{minipage}[t]{0.49\textwidth}
        \vspace{0pt}
        \centering
        \includegraphics[width=0.9\linewidth]{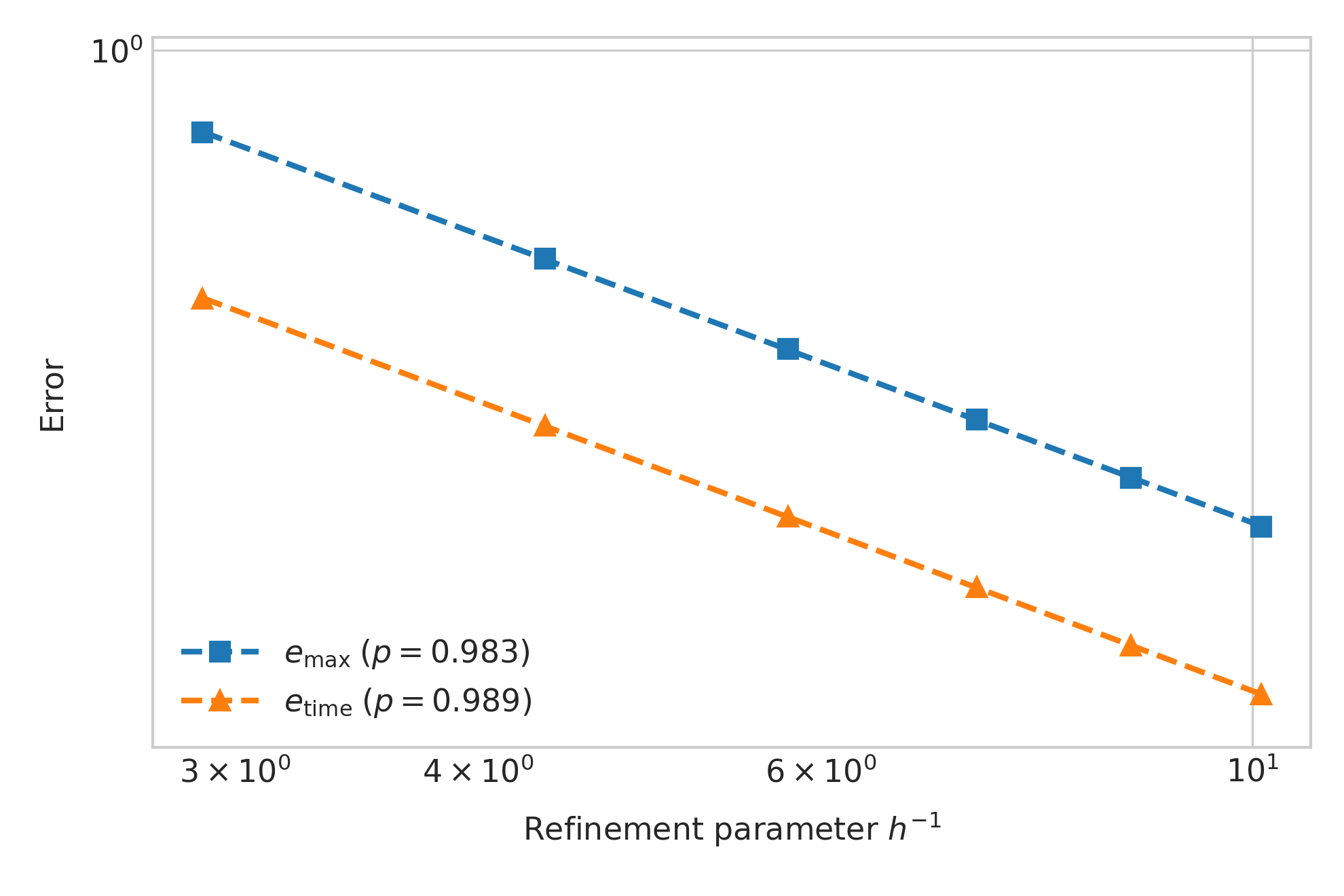}
        \caption{The plotted errors $e_\text{max},e_\text{time}$ and their corresponding $\log$-$\log$ least-squares fits with $p$ denoting the slope of the respective best-fitting line.}
        \label{fig:convergence}
    \end{minipage}
    \hfill
    \begin{minipage}[t]{0.49\textwidth}
        \vspace{0pt}
        \centering
        \includegraphics[width=0.9\linewidth]{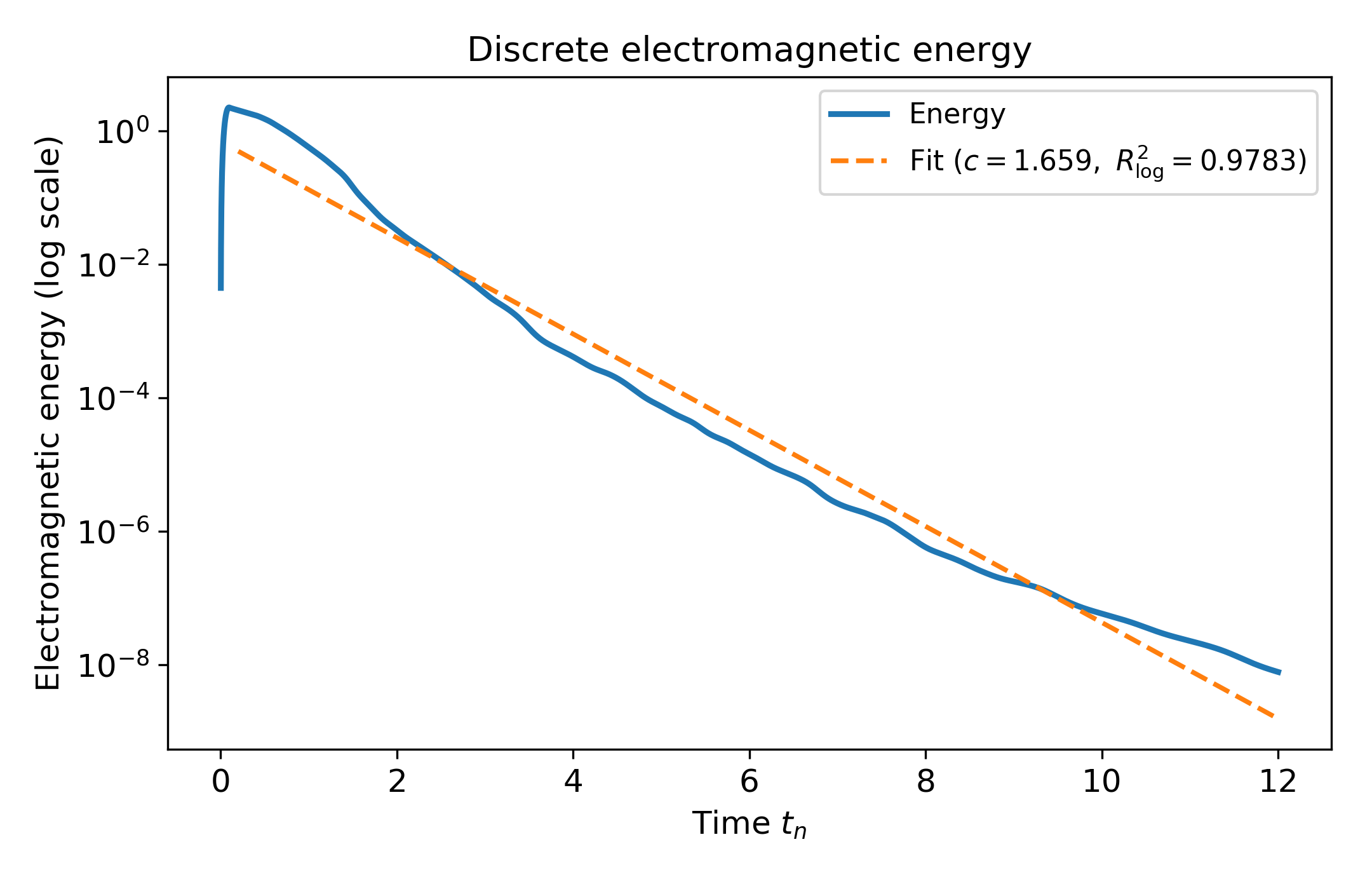}
        \caption{The discrete electromagnetic energy for $t_n\geq 0.002$ and its exponential least-squares fit on the time interval [0.2,12] on a logarithmic scale.}
        \label{fig:energy-decay}
    \end{minipage}
\end{figure}

{\footnotesize 
\printbibliography}

@article {MR3778338,
    AUTHOR = {Egger, H. and Kugler, T.},
     TITLE = {Damped wave systems on networks: exponential stability and
              uniform approximations},
   JOURNAL = {Numer. Math.},
  FJOURNAL = {Numerische Mathematik},
    VOLUME = {138},
      YEAR = {2018},
    NUMBER = {4},
     PAGES = {839--867},
}

@article{ned80,
 author = { N\'ed\'elec, J.~C.},
 title = {Mixed finite elements in $\mathbb{R}^3$},
 journal = {Numer. Math.},
 volume = {35},
 year = {1980},
 pages = {315--341},
 }

@article {MR2846772,
    AUTHOR = {Durand, S. and Slodi\v cka, M.},
     TITLE = {Fully discrete finite element method for {M}axwell's equations
              with nonlinear conductivity},
   JOURNAL = {IMA J. Numer. Anal.},
  FJOURNAL = {IMA Journal of Numerical Analysis},
    VOLUME = {31},
      YEAR = {2011},
    NUMBER = {4},
     PAGES = {1713--1733},
      ISSN = {0272-4979,1464-3642},
   MRCLASS = {65M60 (65M12 65M15 78A25 78M10)},
  MRNUMBER = {2846772},
       DOI = {10.1093/imanum/drr007},
       URL = {https://doi.org/10.1093/imanum/drr007},
}

@book {MR3013583,
    AUTHOR = {Li, J. and Huang, Y.},
     TITLE = {Time-domain finite element methods for {M}axwell's equations
              in metamaterials},
    SERIES = {Springer Series in Computational Mathematics},
    VOLUME = {43},
 PUBLISHER = {Springer, Heidelberg},
      YEAR = {2013},
     PAGES = {xii+302},
      ISBN = {978-3-642-33788-8; 978-3-642-33789-5},
   MRCLASS = {65M60 (35Q61)},
  MRNUMBER = {3013583},
MRREVIEWER = {Barbara\ Zubik-Kowal},
       DOI = {10.1007/978-3-642-33789-5},
       URL = {https://doi.org/10.1007/978-3-642-33789-5},
}

@article {MR2229840,
    AUTHOR = {Li, J. and Chen, Y.},
     TITLE = {Analysis of a time-domain finite element method for 3-{D}
              {M}axwell's equations in dispersive media},
   JOURNAL = {Comput. Methods Appl. Mech. Engrg.},
  FJOURNAL = {Computer Methods in Applied Mechanics and Engineering},
    VOLUME = {195},
      YEAR = {2006},
    NUMBER = {33-36},
     PAGES = {4220--4229},
      ISSN = {0045-7825,1879-2138},
   MRCLASS = {78M10 (65N30 78A48)},
  MRNUMBER = {2229840},
       DOI = {10.1016/j.cma.2005.08.002},
       URL = {https://doi.org/10.1016/j.cma.2005.08.002},
}

@article {MR4780410,
    AUTHOR = {Hensel, M. and Winckler, M. and Yousept, I.},
     TITLE = {Numerical solutions to hyperbolic {M}axwell quasi-variational
              inequalities in {B}ean-{K}im model for type-{II}
              superconductivity},
   JOURNAL = {ESAIM Math. Model. Numer. Anal.},
  FJOURNAL = {ESAIM. Mathematical Modelling and Numerical Analysis},
    VOLUME = {58},
      YEAR = {2024},
    NUMBER = {4},
     PAGES = {1385--1411},
      ISSN = {2822-7840,2804-7214},
   MRCLASS = {65M60 (65K15)},
  MRNUMBER = {4780410},
       DOI = {10.1051/m2an/2024034},
       URL = {https://doi.org/10.1051/m2an/2024034},
}

@book{monk,
    author = {Monk, Peter},
    title = {Finite Element Methods for Maxwell's Equations},
    publisher = {Oxford University Press},
    year = {2003},
    month = {04},
    isbn = {9780198508885},
    doi = {10.1093/acprof:oso/9780198508885.001.0001},
    url = {https://doi.org/10.1093/acprof:oso/9780198508885.001.0001},
}

@article {monk2,
    AUTHOR = {Monk, Peter},
     TITLE = {A mixed method for approximating {M}axwell's equations},
   JOURNAL = {SIAM J. Numer. Anal.},
  FJOURNAL = {SIAM Journal on Numerical Analysis},
    VOLUME = {28},
      YEAR = {1991},
    NUMBER = {6},
     PAGES = {1610--1634},
      ISSN = {0036-1429},
   MRCLASS = {65N30 (35Q60)},
  MRNUMBER = {1135758},
MRREVIEWER = {P.\ Rochus},
       DOI = {10.1137/0728081},
       URL = {https://doi.org/10.1137/0728081},
}

@article {monk3,
    AUTHOR = {Monk, Peter},
     TITLE = {A comparison of three mixed methods for the time-dependent
              {M}axwell's equations},
   JOURNAL = {SIAM J. Sci. Statist. Comput.},
  FJOURNAL = {Society for Industrial and Applied Mathematics. Journal on
              Scientific and Statistical Computing},
    VOLUME = {13},
      YEAR = {1992},
    NUMBER = {5},
     PAGES = {1097--1122},
      ISSN = {0196-5204},
   MRCLASS = {65N30 (65M60 78-08)},
  MRNUMBER = {1177800},
MRREVIEWER = {Teodor\ Potra},
       DOI = {10.1137/0913064},
       URL = {https://doi.org/10.1137/0913064},
}

@article {monk5,
    AUTHOR = {Monk, Peter},
     TITLE = {An analysis of {N}\'ed\'elec's method for the spatial
              discretization of {M}axwell's equations},
   JOURNAL = {J. Comput. Appl. Math.},
  FJOURNAL = {Journal of Computational and Applied Mathematics},
    VOLUME = {47},
      YEAR = {1993},
    NUMBER = {1},
     PAGES = {101--121},
      ISSN = {0377-0427,1879-1778},
   MRCLASS = {65M60 (78-08)},
  MRNUMBER = {1226366},
MRREVIEWER = {P.\ Rochus},
       DOI = {10.1016/0377-0427(93)90093-Q},
       URL = {https://doi.org/10.1016/0377-0427(93)90093-Q},
}

@book{showalter,
  author    = {Showalter, R. E.},
  title     = {Monotone Operators in Banach Space and Nonlinear Partial Differential Equations},
  series    = {Mathematical Surveys and Monographs},
  volume    = {49},
  publisher = {American Mathematical Society, Providence, RI},
  year      = {1997},
  pages     = {xiv+278},
  doi       = {10.1090/surv/049}
}

@incollection {nicaise1,
    AUTHOR = {Eller, M. and Lagnese, J. E. and Nicaise, S.},
     TITLE = {Decay rates for solutions of a {M}axwell system with nonlinear
              boundary damping},
      NOTE = {Special issue in memory of Jacques-Louis Lions},
   JOURNAL = {Comput. Appl. Math.},
  FJOURNAL = {Computational \& Applied Mathematics},
    VOLUME = {21},
      YEAR = {2002},
    NUMBER = {1},
     PAGES = {135--165},
      ISSN = {1807-0302},
   MRCLASS = {78A25 (35L50 35Q60 93D15)},
  MRNUMBER = {2009950},
}

@article {nicaise2,
    AUTHOR = {Eller, Matthias and Lagnese, John E. and Nicaise, Serge},
     TITLE = {Stabilization of heterogeneous {M}axwell's equations by linear
              or nonlinear boundary feedback},
   JOURNAL = {Electron. J. Differential Equations},
  FJOURNAL = {Electronic Journal of Differential Equations},
      YEAR = {2002},
     PAGES = {No. 21, 26},
      ISSN = {1072-6691},
   MRCLASS = {93D15 (35B35 35L50 35Q60 78A25 93B05 93C20)},
  MRNUMBER = {1884990},
MRREVIEWER = {Enrique\ Zuazua},
}

@article {Nicaise3,
    AUTHOR = {Nicaise, Serge and Pignotti, Cristina},
     TITLE = {Boundary stabilization of {M}axwell's equations with
              space-time variable coefficients},
   JOURNAL = {ESAIM Control Optim. Calc. Var.},
  FJOURNAL = {ESAIM. Control, Optimisation and Calculus of Variations},
    VOLUME = {9},
      YEAR = {2003},
     PAGES = {563--578},
      ISSN = {1292-8119,1262-3377},
   MRCLASS = {93D15 (35Q60 78A02 93C20)},
  MRNUMBER = {1998715},
MRREVIEWER = {Bing-Yu\ Zhang},
       DOI = {10.1051/cocv:2003027},
       URL = {https://doi.org/10.1051/cocv:2003027},
}

@book{roubicek,
  author    = {Roubíček, Tomáš},
  title     = {Nonlinear Partial Differential Equations with Applications},
  series    = {International Series of Numerical Mathematics},
  volume    = {153},
  publisher = {Birkhäuser, Basel},
  year      = {2005},
  pages     = {xviii+405},
  isbn      = {3-7643-7293-1},
  doi       = {10.1007/3-7643-7397-0}
}

@article{slodicka1,
title = {Fully discrete finite element scheme for {M}axwell's equations with non-linear boundary condition},
journal = {Journal of Mathematical Analysis and Applications},
volume = {375},
number = {1},
pages = {230-244},
year = {2011},
issn = {0022-247X},
doi = {https://doi.org/10.1016/j.jmaa.2010.09.016},
url = {https://www.sciencedirect.com/science/article/pii/S0022247X10007493},
author = {Marián Slodička and Stephane Durand}
}

@book {Guermond,
    AUTHOR = {Ern, Alexandre and Guermond, Jean-Luc},
     TITLE = {Finite elements {I}---{A}pproximation and interpolation},
    SERIES = {Texts in Applied Mathematics},
    VOLUME = {72},
 PUBLISHER = {Springer, Cham},
      YEAR = {[2021] \copyright 2021},
     PAGES = {xii+325},
      ISBN = {978-3-030-56340-0; 978-3-030-56341-7},
   MRCLASS = {65-01},
  MRNUMBER = {4242224},
       DOI = {10.1007/978-3-030-56341-7},
       URL = {https://doi.org/10.1007/978-3-030-56341-7},
}

@book {Vainberg,
    AUTHOR = {Va\u inberg, M. M.},
     TITLE = {Variational method and method of monotone operators in the
              theory of nonlinear equations},
    EDITOR = {Louvish, D.},
      NOTE = {Translated from the Russian by A. Libin},
 PUBLISHER = {Halsted Press [John Wiley \& Sons], New York-Toronto; Israel
              Program for Scientific Translations, Jerusalem-London},
      YEAR = {1973},
     PAGES = {xi+356},
   MRCLASS = {47H15},
  MRNUMBER = {467428},
MRREVIEWER = {Mieczyslaw\ Altman},
}

@article{Kim,
  title = {Magnetization and Critical Supercurrents},
  author = {Kim, Y. B. and Hempstead, C. F. and Strnad, A. R.},
  journal = {Phys. Rev.},
  volume = {129},
  issue = {2},
  pages = {528--535},
  numpages = {0},
  year = {1963},
  month = {Jan},
  publisher = {American Physical Society},
  doi = {10.1103/PhysRev.129.528},
  url = {https://link.aps.org/doi/10.1103/PhysRev.129.528}
}

@article {Kikuchi,
    AUTHOR = {Kikuchi, Fumio},
     TITLE = {On a discrete compactness property for the {N}\'ed\'elec
              finite elements},
   JOURNAL = {J. Fac. Sci. Univ. Tokyo Sect. IA Math.},
  FJOURNAL = {Journal of the Faculty of Science. University of Tokyo.
              Section IA. Mathematics},
    VOLUME = {36},
      YEAR = {1989},
    NUMBER = {3},
     PAGES = {479--490},
      ISSN = {0040-8980},
   MRCLASS = {65N30},
  MRNUMBER = {1039483},
MRREVIEWER = {Reinhard\ Scholz},
}

@article {egger3,
    AUTHOR = {Egger, H. and Kurz, S. and L\"oscher, R.},
     TITLE = {On the exponential stability of uniformly damped wave
              equations and their structure-preserving discretization},
   JOURNAL = {Results Appl. Math.},
  FJOURNAL = {Results in Applied Mathematics},
    VOLUME = {24},
      YEAR = {2024},
     PAGES = {Paper No. 100502, 12},
      ISSN = {2590-0374},
   MRCLASS = {65M12 (35L05 35Q61 65M60)},
  MRNUMBER = {4806503},
       DOI = {10.1016/j.rinam.2024.100502},
       URL = {https://doi.org/10.1016/j.rinam.2024.100502},
}

@article {Zuazua,
    AUTHOR = {Ervedoza, Sylvain and Zuazua, Enrique},
     TITLE = {Uniformly exponentially stable approximations for a class of
              damped systems},
   JOURNAL = {J. Math. Pures Appl. (9)},
  FJOURNAL = {Journal de Math\'ematiques Pures et Appliqu\'ees. Neuvi\`eme
              S\'erie},
    VOLUME = {91},
      YEAR = {2009},
    NUMBER = {1},
     PAGES = {20--48},
      ISSN = {0021-7824},
   MRCLASS = {65L05 (93C55 93D05)},
  MRNUMBER = {2487899},
MRREVIEWER = {Richard\ A.\ Al\`o},
       DOI = {10.1016/j.matpur.2008.09.002},
       URL = {https://doi.org/10.1016/j.matpur.2008.09.002},
}

\appendix
\renewcommand{\thesection}{\Alph{section}} 
\setcounter{section}{0}
\section{Appendix A}   \label{appen a}
\textit{Uniqueness of a solution to \eqref{eq:maxwell intro full}.} Let $(\f{E}_1,\f{H}_1), (\f{E}_2,\f{H}_2)$ denote two solutions of \eqref{eq:maxwell intro full} and set $(\f{E},\f{H})\coloneqq (\f{E}_1 - \f{E}_2, \f{H}_1 - \f{H}_2)$. For $t\in [0,T]$, we calculate
\begin{align*}
\frac{1}{2}
\norm{(\f{E},\f{H})(t)}_{\f{L}^2_{\varepsilon(t)}(\Omega)\times \f{L}^2_{\mu(t)}(\Omega)}^2
&=
\frac{1}{2}\int_0^t
\frac{d}{ds}
\norm{(\f{E},\f{H})(s)}_{\f{L}^2_{\varepsilon(s)}(\Omega)\times \f{L}^2_{\mu(s)}(\Omega)}^2
\,ds
\\
&\underbrace{=}_{\eqref{eq:maxwell intro full}, \eqref{eq:partial}}
-\frac{1}{2}\int_0^t
(\partial_t\varepsilon(s)\f{E}(s),\f{E}(s))_{\f{L}^2(\Omega)}\,ds
-\frac{1}{2}\int_0^t
(\partial_t\mu(s)\f{H}(s),\f{H}(s))_{\f{L}^2(\Omega)}\,ds
\\
&\quad
-\int_0^t
\big(
\f{J}_{tot}(\cdot,\f{E}_1(s),\f{H}_1(s),s)
-\f{J}_{ tot}(\cdot,\f{E}_2(s),\f{H}_2(s),s),
\f{E}(s)
\big)_{\f{L}^2(\Omega)}
\,ds
\\
&\quad
-\int_0^t
\big(
\f{b}(\cdot, 
\boldsymbol\nu\times\f{E}_1(s))
-\f{b}(\cdot,\boldsymbol\nu\times\f{E}_2(s)),
\boldsymbol\nu\times\f{E}(s)
\big)_{\f{L}^2(\partial\Omega)}
\,ds
\\
&\underbrace{\leq}_{\ref{ass:A1},\ref{ass:A4},\ref{ass:A5}}
c\int_0^t
\norm{(\f{E},\f{H})(s)}_{\f{L}^2(\Omega)\times \f{L}^2(\Omega)}^2
\,ds
\end{align*}
with a constant $c>0$, independent of $\f{E}$ and  $\f{H}$. By the uniform positive definiteness of $\varepsilon$ and $\mu$, Grönwall's
inequality therefore yields $\f{E}=\f{H}=0$.

\section{Appendix B}   \label{appen b}
\begin{proof}[Proof of Lemma \ref{lemma:V}]
Let $r_h$ denote the interpolation operator associated with $\textbf{ND}_h$, which maps every sufficiently smooth $\f{u}\in \f{H}(\curl)$ to the unique element of $\textbf{ND}_h$ with the same global degrees of freedom as $\f{u}$. It is well known that   $r_h$ is well-defined on $\f{H}^s(\curl)$   for any $1/2 < s \leq 1$  (cf. \citegen[Theorem]{monk}{3.9}) and that
\begin{align}\label{eq:r_h-est}
   \norm{r_h\f{u} - \f{u}}_{\f{H}(\curl)}\leq C_1h^s \norm{\f{u}}_{\f{H}^s(\curl)}\quad \forall \f{u}\in \f{H}^s(\curl) \quad \forall h>0 
\end{align}
for some   constant $C_1>0$ not depending on $h$ and $\f{u}$ (cf. \citegen[Theorem]{monk}{5.41}). 

Let  $ \f{u} \in \f{H}^s(\curl)$. As $s > 1/2$, the trace theorem \citegen[Theorem]{monk}{3.9} implies  that the trace of $\f{u}$ is well-defined in   $\f{L}^2(\partial \Omega)$. Furthermore, since $|\boldsymbol{\nu}(s)|=1$ holds for a.e. $s \in \partial \Omega$, it follows that   
    $$\norm{\boldsymbol{\nu} \times (r_h\f{u} - \f{u})}_{\f{L}^2(\partial \Omega)}\leq \norm{r_h\f{u} - \f{u}}_{\f{L}^2(\partial \Omega)}.$$
    As shown in the proof of  \citegen[Lemma]{monk}{5.52}, there exists a constant $C_2>0$, independent of $\f{u}$ and $h$, such that 
    \begin{equation} \label{eq:r_h-est2}
     \norm{r_h\f{u} - \f{u}}_{\f{L}^2(\partial \Omega)}\le C_2 h^{s-\frac{1}{2}}\norm{\f{u}}_{\f{H}^s(\curl)}  \quad  \forall h >0. 
  \end{equation}
    Now, by the definition of the Hilbert projection operator $\Pi_h:\f{V}\to \textbf{ND}_h$, we have
   \begin{align*}
\|\Pi_h \f{u} - \f{u}\|_{\f{V}}
  &= \min_{\f{u}_h \in \textbf{ND}_h} \| \f{u}_h - \f{u} \|_{\f{V}} \le \| r_h\f{u} - \f{u} \|_{\f{V}} \\
  &= \bigl(\| r_h\f{u} - \f{u} \|_{\f{H}(\curl)}^2
      + \| \boldsymbol{\nu} \times(r_h\f{u} - \f{u})\|_{\f{L}^2(\partial\Omega)}^2\bigr)^{1/2} \\
  &\overset{\mathclap{\eqref{eq:r_h-est},\eqref{eq:r_h-est2}}}{\le}
    (C_1 h^s + C_2 h^{s-\tfrac12}) \|\f{u}\|_{\f{H}^s(\curl)},
\end{align*}
from which the desired estimate \eqref{estimate lemma} follows. The second claim of the lemma is a direct consequence of \eqref{estimate lemma} and the density of $\f{C}^\infty(\overline{\Omega})$ in $\f{V}$.
\end{proof}

\section{Appendix C} \label{appen c}
In order to prove Proposition \ref{theorem:stability}, we shall first need the following technical result:
\begin{lemma}\label{lemma:a}  \normalfont 
    Let \ref{ass:A1} hold and let $N \in \mathbb N$, $1\leq m\leq N$, and $a_0,a_1,\dots,a_m\in \f{L}^2(\Omega)$. Then
        \begin{align*}
\norm{a_m}^2_{\f{L}^2_{\varepsilon(t_m)}(\Omega)} & \leq \left(\frac{LT}{\underline{\varepsilon}N} + 1\right)\norm{a_0}^2_{\f{L}^2_{\varepsilon(0)}(\Omega)} + \sum_{k=1}^{m-1}\frac{LT}{\underline{\varepsilon}N} \norm{a_{k}}^2_{\f{L}^2_{\varepsilon(t_{k})}(\Omega)} + 2\sum_{k=1}^m (a_k - a_{k-1},a_k)_{\f{L}^2_{\varepsilon(t_k)}(\Omega)}.
        \end{align*}
  The same estimate holds with $\underline{\varepsilon},\varepsilon(t_k)$ and $L$ replaced by $\underline \mu,\mu_h^k$ and $\norm{\partial_t \mu}_{L^\infty(0,T;L^\infty(\Omega)^{3\times 3}}$, respectively.
\end{lemma}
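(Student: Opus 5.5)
The plan is to use the polarization identity for the symmetric weight $\varepsilon(t_k)$ at each step, which turns the mixed term $(a_k-a_{k-1},a_k)$ into a difference of weighted norms, and then to telescope. For $A=\varepsilon(t_k)$ symmetric, the identity $A(a-b)\cdot a=\tfrac12[Aa\cdot a-Ab\cdot b+A(a-b)\cdot(a-b)]$ (already used in the proof of Lemma \ref{lemma:energy-est}) and the positive semidefiniteness of $A$ give
\begin{equation*}
2(a_k-a_{k-1},a_k)_{\f{L}^2_{\varepsilon(t_k)}(\Omega)}\geq \norm{a_k}^2_{\f{L}^2_{\varepsilon(t_k)}(\Omega)}-\norm{a_{k-1}}^2_{\f{L}^2_{\varepsilon(t_k)}(\Omega)}.
\end{equation*}

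The only issue is that $a_{k-1}$ is measured in the wrong weight, $\varepsilon(t_k)$ instead of $\varepsilon(t_{k-1})$. To fix this, we use \ref{ass:A1} together with $\tau=T/N$:
\begin{equation*}
\norm{\varepsilon(t_k)-\varepsilon(t_{k-1})}_{L^\infty(\Omega)^{3\times3}}\leq \tau\norm{\partial_t\varepsilon}_{L^\infty}\leq \frac{LT}{N}.
\end{equation*}
Here we take $L$ to be at least $\norm{\partial_t\varepsilon}_{L^\infty(0,T;L^\infty(\Omega)^{3\times3})}$, which holds for the constant $L$ defined after \eqref{def J}. Combining this with the lower bound in \eqref{eq:uniformposdef} yields
\begin{equation*}
\norm{a_{k-1}}^2_{\f{L}^2_{\varepsilon(t_k)}(\Omega)}\leq \Bigl(1+\frac{LT}{\underline{\varepsilon}N}\Bigr)\norm{a_{k-1}}^2_{\f{L}^2_{\varepsilon(t_{k-1})}(\Omega)}.
\end{equation*}

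Inserting this into the first inequality gives, for each $k$,
\begin{equation*}
\norm{a_k}^2_{\varepsilon(t_k)}-\norm{a_{k-1}}^2_{\varepsilon(t_{k-1})}\leq \frac{LT}{\underline{\varepsilon}N}\norm{a_{k-1}}^2_{\varepsilon(t_{k-1})}+2(a_k-a_{k-1},a_k)_{\varepsilon(t_k)}.
\end{equation*}
Summing over $k=1,\dots,m$ telescopes the left-hand side. On the right, the $k=1$ term gives the coefficient $\frac{LT}{\underline{\varepsilon}N}$ on $\norm{a_0}^2$, and the remaining terms produce the sum over $k=1,\dots,m-1$ after shifting the index. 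Adding $\norm{a_0}^2_{\varepsilon(0)}$ to both sides then gives exactly the claimed estimate.

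For the $\mu$-version we repeat the argument with the weights $\mu_h^k=\f{Q}_h\mu(t_k)$. By linearity of $\f{Q}_h$ and \eqref{eq:qhest2},
\begin{equation*}
\norm{\mu_h^k-\mu_h^{k-1}}_{L^\infty}\leq\norm{\mu(t_k)-\mu(t_{k-1})}_{L^\infty}\leq \tau\norm{\partial_t\mu}_{L^\infty}.
\end{equation*}
Symmetry of $\mu_h^k$ and the lower bound $\underline\mu$ come from \eqref{ineq:munt}, so the same steps go through. No step is a real obstacle. The only point needing care is the index bookkeeping in the telescoping sum: the shift must produce the coefficient $(1+LT/(\underline\varepsilon N))$ on $a_0$ and the sum running only up to $m-1$.
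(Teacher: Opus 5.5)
Your proof is correct and is essentially the paper's argument. The paper's rearrangement of $\sum_k (a_k-a_{k-1},a_k)_{\f{L}^2_{\varepsilon(t_k)}(\Omega)}$ is the polarization identity you invoke, followed by the same weight-correction bound $\norm{a_{k-1}}^2_{\f{L}^2_{\varepsilon(t_k)}(\Omega)}-\norm{a_{k-1}}^2_{\f{L}^2_{\varepsilon(t_{k-1})}(\Omega)}\le \frac{LT}{\underline{\varepsilon}N}\norm{a_{k-1}}^2_{\f{L}^2_{\varepsilon(t_{k-1})}(\Omega)}$ and an index shift, while your explicit remarks that $L\ge\norm{\partial_t\varepsilon}_{L^\infty(0,T;L^\infty(\Omega)^{3\times 3})}$ and that $\f{Q}_h$ does not increase the $L^\infty$-norm supply details the paper leaves implicit.
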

\begin{proof}[Proof of Lemma \ref{lemma:a}]We calculate
    \begin{equation*}
    \begin{split}
        \sum_{k=1}^m (a_k - a_{k-1},a_k)_{\f{L}^2_{\varepsilon(t_k)}} & = \sum_{k=1}^m \norm{a_k - a_{k-1}}^2_{\f{L}_{\varepsilon(t_k)}^2(\Omega)} +\sum_{k=1}^m (a_k - a_{k-1},a_{k-1})_{\f{L}_{\varepsilon(t_k)}^2(\Omega)}\\
        & = \sum_{k=1}^m \norm{a_k - a_{k-1}}^2_{\f{L}_{\varepsilon(t_k)}^2(\Omega)} -\sum_{k=1}^m (a_k - a_{k-1},a_k)_{\f{L}_{\varepsilon(t_k)}^2(\Omega)}\\
        & + \sum_{k=1}^m \left(\norm{a_k}_{\f{L}_{\varepsilon(t_k)}^2(\Omega)}^2 - \norm{a_{k-1}}_{\f{L}^2_{\varepsilon(t_k)}(\Omega)}^2\right).
    \end{split}
\end{equation*}
The last sum on the right-hand side equals
$$\norm{a_m}^2_{\f{L}^2_{\varepsilon(t_m)}(\Omega)} - \norm{a_0}^2_{\f{L}^2_{\varepsilon(0)}(\Omega)} + \sum_{k=1}^m \left(\norm{a_{k-1}}^2_{\f{L}^2_{\varepsilon(t_{k-1})}(\Omega)}-\norm{a_{k-1}}^2_{\f{L}^2_{\varepsilon(t_{k})}(\Omega)}\right).$$
Using \hyperref[ass:A1]{(A1)}, we estimate 
\begin{equation*}
    \begin{split}
        \norm{a_{k-1}}^2_{\f{L}^2_{\varepsilon(t_{k})}(\Omega)} - \norm{a_{k-1}}^2_{\f{L}^2_{\varepsilon(t_{k-1})}(\Omega)}& = \Bigl(\bigl(\varepsilon(t_k) - \varepsilon(t_{k-1})\bigr)a_{k-1}, a_{k-1}\Bigr)_{\f{L}^2(\Omega)}\\
        & \leq \norm{\varepsilon(t_k) - \varepsilon(t_{k-1})}_{\f{L}^\infty(\Omega)^{3\times 3}}\norm{a_{k-1}}_{\f{L}^2(\Omega)}^2\\
        & \leq \frac{LT}{\underline{\varepsilon}N} \norm{a_{k-1}}_{\f{L}^2_{\varepsilon(t_{k-1})}(\Omega)}^2.
    \end{split}
\end{equation*}
After an index shift, the claim follows.
\end{proof}
\begin{proof}[Proof of Proposition \ref{theorem:stability}]
Let $h>0$ and $N>M(h).$ Moreover,
let $m\in \{1,\dots,N\}$ and $k\in \{1,\dots,m\}$ be fixed. By subtracting the first equality in (P$_{N,h}$) for $k$ and $k-1$ from one another and testing with $\f{w} = \delta \f{E}^k_h\in \textbf{ND}_h$, it follows that
\begin{align*}
&(\varepsilon(t_k)\delta \f{E}^k_h-\varepsilon(t_{k-1})\delta \f{E}^{k-1}_h,\delta\f{E}^k_h)_{\f{L}^2(\Omega)}
+(\f{J}(\cdot,\f{E}^k_h,\f{H}_h^k,t_k)-\f{J}(\cdot,\f{E}^{k-1}_h,\f{H}^{k-1}_h,t_{k-1}),\delta\f{E}^k_h)_{\f{L}^2(\Omega)}\\
&\quad-(\delta \f{H}^k_h,\curl(\f{E}^k_h-\f{E}^{k-1}_h))_{\f{L}^2(\Omega)}
+(\f{b}(\cdot,\boldsymbol{\nu}\times \f{E}^k_h)-\f{b}(\cdot,\boldsymbol{\nu}\times \f{E}^{k-1}_h),
\boldsymbol{\nu}\times \delta \f{E}^k_h)_{\f{L}^2(\partial \Omega)}\\
&=(\f{f}^k_h-\f{f}^{k-1}_h,\delta\f{E}^k_h)_{\f{L}^2(\Omega)}.
\end{align*}
Similarly, subtracting the second equality in (P$_{N,h}$) for $k$ and $k-1$ and taking the $\f{L}^2(\Omega)$-scalar product with $\delta\f{H}_h^k$ gives
\begin{align*}
&(\mu_h^k\delta\f{H}_h^k-\mu_h^{k-1}\delta\f{H}_h^{k-1},\delta\f{H}_h^k)_{\f{L}^2(\Omega)}
+(\mu_{t,h}^k\f{H}_h^k-\mu_{t,h}^{k-1}\f{H}_h^{k-1},\delta\f{H}_h^k)_{\f{L}^2(\Omega)}\\
&\quad+(\curl(\f{E}_h^k-\f{E}_h^{k-1}),\delta\f{H}_h^k)_{\f{L}^2(\Omega)}=0.
\end{align*}
We rewrite the first terms on the left-hand sides  as
$$
(\delta\f{E}^k_h-\delta\f{E}^{k-1}_h,\delta\f{E}^k_h)_{\f{L}^2_{\varepsilon(t_k)}(\Omega)}
+\left((\varepsilon(t_k)-\varepsilon(t_{k-1}))\delta\f{E}^{k-1}_h,\delta\f{E}^k_h\right)_{\f{L}^2(\Omega)}
$$
and
$$
(\delta\f{H}^k_h-\delta\f{H}^{k-1}_h,\delta\f{H}^k_h)_{\f{L}^2_{\mu_h^k}(\Omega)}
+\left((\mu_h^k-\mu_h^{k-1})\delta\f{H}^{k-1}_h,\delta\f{H}^k_h\right)_{\f{L}^2(\Omega)},
$$
respectively. By adding the two equalities, employing the monotonicity of $\f{b}$ and summing over $k$, we obtain
\begin{align*}
&\sum_{k=1}^m
(\delta\f{E}^k_h-\delta\f{E}^{k-1}_h,\delta\f{E}^k_h)_{\f{L}^2_{\varepsilon(t_k)}(\Omega)}
+\sum_{k=1}^m
(\delta\f{H}^k_h-\delta\f{H}^{k-1}_h,\delta\f{H}^k_h)_{\f{L}^2_{\mu_h^k}(\Omega)}\\
&\leq
\sum_{k=1}^m(\f{f}_h^k-\f{f}_h^{k-1},\delta\f{E}_h^k)_{\f{L}^2(\Omega)}
-\sum_{k=1}^m(\f{J}(\cdot,\f{E}_h^k,\f{H}_h^k,t_k)
-\f{J}(\cdot,\f{E}_h^{k-1},\f{H}_h^{k-1},t_{k-1}),\delta\f{E}_h^k)_{\f{L}^2(\Omega)}\\
&\quad-\sum_{k=1}^m
\left((\varepsilon(t_k)-\varepsilon(t_{k-1}))
\delta\f{E}_h^{k-1},\delta\f{E}_h^k\right)_{\f{L}^2(\Omega)}\\
&\quad-\sum_{k=1}^m
\left((\mu_h^k-\mu_h^{k-1})
\delta\f{H}_h^{k-1},\delta\f{H}_h^k\right)_{\f{L}^2(\Omega)}
-\sum_{k=1}^m
(\mu_{t,h}^k\f{H}_h^k-\mu_{t,h}^{k-1}\f{H}_h^{k-1},
\delta\f{H}_h^k)_{\f{L}^2(\Omega)}.
\end{align*}
Applying Lemma \ref{lemma:a} to the left side yields
\begin{align*}
&\norm{\delta\f{E}^m_h}_{\f{L}^2_{\varepsilon(t_m)}(\Omega)}^2
+\norm{\delta\f{H}^m_h}_{\f{L}^2_{\mu_h^m}(\Omega)}^2\\
&\leq
\left[\frac{LT}{\underline{\varepsilon}N}+1\right]
\norm{\delta\f{E}^0_h}_{\f{L}^2_{\varepsilon(0)}(\Omega)}^2
+\left[
\frac{T}{\underline{\mu}N}
\norm{\partial_t\mu}_{L^\infty(0,T;L^\infty(\Omega)^{3\times3})}
+1\right]
\norm{\delta\f{H}^0_h}_{\f{L}^2_{\mu_h^0}(\Omega)}^2\\
&\quad+\sum_{k=1}^{m-1}\frac{LT}{\underline{\varepsilon}N}
\norm{\delta\f{E}^k_h}_{\f{L}^2_{\varepsilon(t_k)}(\Omega)}^2
+\sum_{k=1}^{m-1}
\frac{T}{\underline{\mu}N}
\norm{\partial_t\mu}_{L^\infty(0,T;L^\infty(\Omega)^{3\times3})}
\norm{\delta\f{H}^k_h}_{\f{L}^2_{\mu_h^k}(\Omega)}^2\\
&\quad+\underbrace{2\sum_{k=1}^m
(\f{f}_h^k-\f{f}_h^{k-1},\delta\f{E}_h^k)_{\f{L}^2(\Omega)}}_{\text{I}}\\
&\quad+\underbrace{2\sum_{k=1}^m
\Bigl(
\f{J}(\cdot,\f{E}_h^{k-1},\f{H}_h^{k-1},t_{k-1})
-\f{J}(\cdot,\f{E}_h^k,\f{H}_h^k,t_k)
+\bigl(\varepsilon(t_{k-1})-\varepsilon(t_k)\bigr)
\delta\f{E}_h^{k-1},
\delta\f{E}_h^k
\Bigr)_{\f{L}^2(\Omega)}}_{\text{II}}\\
&\quad-\underbrace{2\sum_{k=1}^m
\Bigl(
(\mu_h^k-\mu_h^{k-1})\delta\f{H}_h^{k-1}
+\mu_{t,h}^k\f{H}_h^k-\mu_{t,h}^{k-1}\f{H}_h^{k-1},
\delta\f{H}_h^k
\Bigr)_{\f{L}^2(\Omega)}}_{\text{III}}.
\end{align*}
By \hyperref[ass:A2]{(A2)} and Young's inequality, we have
\begin{align*}
\text{I}
&\leq
2\sum_{k=1}^m
\underline{\varepsilon}^{-\frac{1}{2}}
\norm{\f{f}_h^k-\f{f}_h^{k-1}}_{\f{L}^2(\Omega)}
\norm{\delta\f{E}_h^k}_{\f{L}^2_{\varepsilon(t_k)}(\Omega)}\\
&\leq
\sum_{k=1}^m
\underline{\varepsilon}^{-1}N
\norm{\f{f}(t_k)-\f{f}(t_{k-1})}_{\f{L}^2(\Omega)}^2
+\sum_{k=1}^m\frac{1}{N}
\norm{\delta\f{E}_h^k}_{\f{L}^2_{\varepsilon(t_k)}(\Omega)}^2\\
&\leq
\frac{T}{\underline{\varepsilon}}
\norm{\partial_t\f{f}}_{L^2(0,T;\f{L}^2(\Omega))}^2
+\sum_{k=1}^m\frac{1}{N}
\norm{\delta\f{E}_h^k}_{\f{L}^2_{\varepsilon(t_k)}(\Omega)}^2.
\end{align*}
Furthermore, \hyperref[ass:A1]{(A1)} and \hyperref[ass:A4]{(A4)} imply that
\begin{equation}\label{eq:es}
\begin{aligned}
\text{II}\leq 2L\tau\sum_{k=1}^m
&\Bigl(
1+\norm{\f{E}_h^k}_{\f{L}^2(\Omega)}
+\norm{\f{E}_h^{k-1}}_{\f{L}^2(\Omega)}
+\norm{\delta\f{E}_h^k}_{\f{L}^2(\Omega)}
+\norm{\delta\f{E}_h^{k-1}}_{\f{L}^2(\Omega)}\\
&+\norm{\f{H}_h^k}_{\f{L}^2(\Omega)}
+\norm{\f{H}_h^{k-1}}_{\f{L}^2(\Omega)}
+\norm{\delta\f{H}_h^k}_{\f{L}^2(\Omega)}
\Bigr)
\norm{\delta\f{E}_h^k}_{\f{L}^2(\Omega)}.
\end{aligned}
\end{equation}
From the first identity in
\begin{equation}\label{eq:delta-sum}
\f{E}_h^k
=
\f{E}_h^0+\sum_{i=1}^k\tau\delta\f{E}_h^i,
\qquad
\f{H}_h^k
=
\f{H}_h^0+\sum_{i=1}^k\tau\delta\f{H}_h^i,
\end{equation}
Young's inequality and the Cauchy--Schwarz inequality imply that
\begin{align*}
\sum_{k=1}^m\tau
\norm{\f{E}_h^k}_{\f{L}^2(\Omega)}
\norm{\delta\f{E}_h^k}_{\f{L}^2(\Omega)}
&\leq
\sum_{k=1}^m\tau
\norm{\f{E}_h^0}_{\f{L}^2(\Omega)}
\norm{\delta\f{E}_h^k}_{\f{L}^2(\Omega)}
+\tau^2\sum_{k=1}^m\sum_{i=1}^m
\norm{\delta\f{E}_h^i}_{\f{L}^2(\Omega)}
\norm{\delta\f{E}_h^k}_{\f{L}^2(\Omega)}\\
&\leq
\frac{T}{2}\norm{\f{E}_h^0}_{\f{L}^2(\Omega)}^2
+\sum_{k=1}^m
\frac{(1+2T)T}{2N}
\norm{\delta\f{E}_h^k}_{\f{L}^2(\Omega)}^2.
\end{align*}
The other terms in the right hand side of \eqref{eq:es} involving only
$\f{E}_h^k$, $\f{E}_h^{k-1}$, $\delta\f{E}_h^k$ and
$\delta\f{E}_h^{k-1}$ can be treated in the same way. To estimate the remaining terms in II, we use the second identity in \eqref{eq:delta-sum}. By the Cauchy--Schwarz inequality,
\begin{align*}
\norm{\f{H}_h^k}_{\f{L}^2(\Omega)}^2
&\leq
2\norm{\f{H}_h^0}_{\f{L}^2(\Omega)}^2
+2\tau^2
\left(
\sum_{i=1}^k
\norm{\delta\f{H}_h^i}_{\f{L}^2(\Omega)}
\right)^2\\
&\leq
2\norm{\f{H}_h^0}_{\f{L}^2(\Omega)}^2
+2T\tau\sum_{i=1}^k
\norm{\delta\f{H}_h^i}_{\f{L}^2(\Omega)}^2.
\end{align*}
Applying the same estimate to $\f{H}_h^{k-1}$ and summing over $k$, we obtain
\begin{align*}
&\tau\sum_{k=1}^m
\left(
\norm{\f{H}_h^k}_{\f{L}^2(\Omega)}^2
+\norm{\f{H}_h^{k-1}}_{\f{L}^2(\Omega)}^2
\right)\\
&\leq
4T\norm{\f{H}_h^0}_{\f{L}^2(\Omega)}^2
+4T^2\tau\sum_{k=1}^m
\norm{\delta\f{H}_h^k}_{\f{L}^2(\Omega)}^2.
\end{align*}
Thus, Young's inequality yields for the remaining terms in II
\begin{align*}
&2L\tau\sum_{k=1}^m
\left(
\norm{\f{H}_h^k}_{\f{L}^2(\Omega)}
+\norm{\f{H}_h^{k-1}}_{\f{L}^2(\Omega)}
+\norm{\delta\f{H}_h^k}_{\f{L}^2(\Omega)}
\right)
\norm{\delta\f{E}_h^k}_{\f{L}^2(\Omega)}\\
&\leq
4LT\norm{\f{H}_h^0}_{\f{L}^2(\Omega)}^2
+\frac{L(4T^2+1)}{\underline{\mu}}\tau
\sum_{k=1}^m
\norm{\delta\f{H}_h^k}_{\f{L}^2_{\mu_h^k}(\Omega)}^2
+\frac{3L}{\underline{\varepsilon}}\tau
\sum_{k=1}^m
\norm{\delta\f{E}_h^k}_{\f{L}^2_{\varepsilon(t_k)}(\Omega)}^2.
\end{align*}
Next, we consider III. Let us write
$$
\mu_{t,h}^k\f{H}_h^k-\mu_{t,h}^{k-1}\f{H}_h^{k-1}
=
\tau\mu_{t,h}^k\delta\f{H}_h^k
+\left(\mu_{t,h}^k-\mu_{t,h}^{k-1}\right)\f{H}_h^{k-1}.
$$
Thus, \hyperref[ass:A1]{(A1)}, \eqref{eq:qhest2} and Young's inequality give
\begin{align*}
\lvert\text{III}\rvert
&\leq
\tau\norm{\partial_t\mu}_{L^\infty(0,T;L^\infty(\Omega)^{3\times3})}
\sum_{k=1}^m
\left(
\norm{\delta\f{H}_h^{k-1}}_{\f{L}^2(\Omega)}^2
+3\norm{\delta\f{H}_h^k}_{\f{L}^2(\Omega)}^2
\right)\\
&\quad+
\tau\norm{\partial_{tt}\mu}_{L^\infty(0,T;L^\infty(\Omega)^{3\times3})}
\sum_{k=1}^m
\left(
\norm{\f{H}_h^{k-1}}_{\f{L}^2(\Omega)}^2
+\norm{\delta\f{H}_h^k}_{\f{L}^2(\Omega)}^2
\right).
\end{align*}
Using the second identity in \eqref{eq:delta-sum}, the same arguments as above and a shift of the summation index, we further obtain
\begin{align*}
\lvert\text{III}\rvert
&\leq
\tau\norm{\partial_t\mu}_{L^\infty(0,T;L^\infty(\Omega)^{3\times3})}
\norm{\delta\f{H}_h^0}_{\f{L}^2(\Omega)}^2\\
&\quad+
2T\norm{\partial_{tt}\mu}_{L^\infty(0,T;L^\infty(\Omega)^{3\times3})}
\norm{\f{H}_h^0}_{\f{L}^2(\Omega)}^2\\
&\quad+
\frac{1}{\underline{\mu}}
\left[
4\norm{\partial_t\mu}_{L^\infty(0,T;L^\infty(\Omega)^{3\times3})}
+(2T^2+1)
\norm{\partial_{tt}\mu}_{L^\infty(0,T;L^\infty(\Omega)^{3\times3})}
\right]
\tau\sum_{k=1}^m
\norm{\delta\f{H}_h^k}_{\f{L}^2_{\mu_h^k}(\Omega)}^2.
\end{align*}
By the definition of $\delta\f{H}_h^0$, \hyperref[ass:A1]{(A1)}, Lemma \ref{lemma:initial-bounded}, \eqref{eq:qhest2} and \eqref{eq:Pi_h proj}, all initial terms appearing above are uniformly bounded. Collecting the estimates for I--III, we see that there exist constants $\alpha,\beta>0$, not depending on $N$ and $h$, such that
\begin{align*}
&\frac{1}{2}
\norm{\delta\f{E}_h^m}_{\f{L}^2_{\varepsilon(t_m)}(\Omega)}^2
+\frac{1}{2}
\norm{\delta\f{H}_h^m}_{\f{L}^2_{\mu_h^m}(\Omega)}^2\\
&\leq
\alpha+
\sum_{k=1}^m\frac{\beta}{2N}
\left(
\frac{1}{2}
\norm{\delta\f{E}_h^k}_{\f{L}^2_{\varepsilon(t_k)}(\Omega)}^2
+\frac{1}{2}
\norm{\delta\f{H}_h^k}_{\f{L}^2_{\mu_h^k}(\Omega)}^2
\right).
\end{align*}
Since $N>\beta$, the term corresponding to $k=m$ on the right hand side can be absorbed into the left hand side. The discrete Gronwall inequality then yields
\begin{align*}
\frac{1}{2}
\norm{\delta\f{E}_h^m}_{\f{L}^2_{\varepsilon(t_m)}(\Omega)}^2
+\frac{1}{2}
\norm{\delta\f{H}_h^m}_{\f{L}^2_{\mu_h^m}(\Omega)}^2
\leq
2\alpha\exp(\beta)
\end{align*}
for all $m\in\{1,\dots,N\}$. Using \eqref{eq:delta-sum}, we also obtain uniform bounds for $\f{E}_h^m$ and $\f{H}_h^m$ in $\f{L}^2(\Omega)$. Moreover, the second equality in (P$_{N,h}$) implies
$$
\curl\,\f{E}_h^m
=
-\mu_h^m\delta\f{H}_h^m-\mu_{t,h}^m\f{H}_h^m,
$$
and hence, by \hyperref[ass:A1]{(A1)}, \eqref{eq:qhest2} and the quadratic-form estimate for $\mu_h^m$,
$$
\norm{\curl\,\f{E}_h^m}_{\f{L}^2(\Omega)}
\leq
\overline{\mu}\norm{\delta\f{H}_h^m}_{\f{L}^2(\Omega)}
+
\norm{\partial_t\mu}_{L^\infty(0,T;L^\infty(\Omega)^{3\times3})}
\norm{\f{H}_h^m}_{\f{L}^2(\Omega)},
$$
which shows that $\norm{\curl\,\f{E}_h^m}_{\f{L}^2(\Omega)}$ is uniformly bounded as well. Finally, by \hyperref[ass:A5]{(A5)}, we have
\begin{align*}
\int_{\partial \Omega}
\f{b}(\cdot,\boldsymbol{\nu}\times \f{E}_h^m)
\cdot\boldsymbol{\nu}\times \f{E}_h^m\,dS
&\geq
\underline b \int_{\lvert\boldsymbol{\nu}\times\f{E}_h^m\rvert\geq K}
\lvert\boldsymbol{\nu}\times\f{E}_h^m\rvert^2\,dS
\end{align*}
and since the left-hand side can be uniformly bounded in terms of $\alpha$ and $\beta$ by (P$_{N,h}$), \hyperref[ass:A4]{(A4)} and the previously derived estimates, the conclusion follows.
\end{proof}

\end{document}